\theoremstyle{plain}
\numberwithin{equation}{section}
\newtheorem{theorem}{Theorem}[section]
\newtheorem{definition}[theorem]{Definition}
\newtheorem{proposition}[theorem]{Proposition}
\newtheorem{lemma}[theorem]{Lemma}
\newtheorem{corollary}[theorem]{Corollary}
\theoremstyle{remark}
\newtheorem{remark}[theorem]{Remark}
\newtheorem{rem}[theorem]{Remark}
\DeclarePairedDelimiterX\intff[2]{[}{]}{#1,#2}
\DeclarePairedDelimiterX\intfo[2]{[}{)}{#1,#2}
\DeclarePairedDelimiterX\intof[2]{(}{]}{#1,#2}
\DeclarePairedDelimiterX\intoo[2]{(}{)}{#1,#2}
\DeclarePairedDelimiter{\pars}{(}{)}
\DeclarePairedDelimiter{\bracks}{[}{]}
\DeclarePairedDelimiter{\absolute}{|}{|}
\DeclarePairedDelimiter{\braces}{\langle}{\rangle}
\DeclarePairedDelimiterX{\setof}[2]{\lbrace}{\rbrace}{#1\,{\colon}\,#2}
\DeclarePairedDelimiterX{\bracksof}[2]{[}{]}{#1\,\delimsize|\,#2}
\DeclarePairedDelimiterX{\parsof}[2]{(}{)}{#1\,\delimsize|\,#2}
\DeclarePairedDelimiterXPP\lnorm[2]{}\lVert\rVert{_{#1}}{#2}
\def\n{\mathbb N}
\def\z{\mathbb Z}
\def\r{\mathbb R}
\def\p{{\mathbb P}}
\def\e{{\mathbb E}}
\def\T{\mathcal T}
\def\d{\mathrm{dist}}
\def\d{{\tt d}}
\def\Sp{{\tt Sp}}
\def\xifour{I}
\newcommand{\1}{\mathbbm{1}}
\def\Var{\mathrm{Var}}
\def\Cov{\mathrm{Cov}}
\def\t{{\tt t}}
\def\F{{\mathscr F}}
\def\a{{\tt a}}
\def\b{{\tt b}}
\def\c{{\tt c}}
\def\t{{\tt s}}
\newcommand\C[1]{C_{#1}}
\title{Central limit theorem for the range of critical branching random walk}
\date{\today}
\author{Tianyi Bai}
\address{Tianyi Bai,  AMSS,
Chinese Academy of Sciences,  China}
\email{tianyi.bai73@amss.ac.cn}
\author{Yueyun Hu}
\address{Yueyun Hu, 
LAGA, Universit\'e Paris XIII,  93430 Villetaneuse,
France}
\email{yueyun@math.univ-paris13.fr}
\begin{document}
 
\begin{abstract} 
In this paper, we study second order fluctuations for the size of the range of a critical branching random walk (BRW) in $\mathbb Z^d$.
We consider the BRW with geometric offspring indexed by the Kesten tree, and show that the size of its range has linear variance when $d>8$, and satisfies a central limit theorem (CLT) with Gaussian limiting distribution when $d>16$. 
The proof combines the stationarity of the model under depth-first exploration, the general CLT of Dedecker and Merlevède~\cite{dedecker02}, a truncation scheme exploiting the local independence of the tree, and a recursive method for controlling moments.
\end{abstract}
 
\maketitle
\section{Introduction}

Let $(S_n)$ be a centered random walk on $\z^d$, denote its range by
$
{\mathcal S}_n=\{S_1,\dots,S_n\}.
$
The study of fluctuations of $\#{\mathcal S}_n$, that is,
\begin{align}\label{eq:srw}
\frac{\#{\mathcal S}_n-\mathbb E[\#{\mathcal S}_n]}{\sqrt{\Var(\#{\mathcal S}_n)}},
\end{align}
has a long history in probability theory. 
Jain and Pruitt~\cite{jain1971} first showed that \eqref{eq:srw} has a gaussian limit when the random walk has finite variance and $d\ge 3$. Le~Gall and Rosen~\cite{LeGallRosen91} extended the existence of limiting distribution of \eqref{eq:srw} to $\alpha$-stable random walks in all dimension $d$, where the limiting distribution is the renormalized self-intersection local time  of an $\alpha$-stable process  if $d< \frac{3\alpha}{2}$, and is gaussian if $d \ge \frac{3\alpha}{2}$. 
More recently, attention has turned to the capacity of the range, 
which provides a geometric description of the range and its intersection probability with another independent random walk.  Asselah, Schapira, and Sousi~\cite{AsselahSchapiraSousi2018} and Schapira~\cite{Schapira2020}
proved a CLT for capacity of the range of the simple random walk in dimensions $d \ge 5$ with a gaussian limiting distribution. 
In dimension $d=4$, Asselah, Schapira, and Sousi~\cite{AsselahSchapiraSousi2019} obtained a CLT with a non-gaussian limiting distribution. See also Cygan, Sandrić, and Šebek~\cite{CyganSandricSebek2019} for a CLT on the capacity of the range of an  $\alpha$-stable random walks  when $d> \frac{5\alpha}{2}$.

In the above-mentioned works, we note that the independence of increments plays an important role.
Here, we are interested in the range of branching random walks (BRW), or equivalently, random walks indexed by trees. In contrast to classical random walks, the increments of a BRW, when the vertices are listed in any  sequential order, 
are no longer independent to each other. Our primary motivation is to establish a CLT for such a family of models without independent increments. 

Specifically, let $\theta$ be a centered probability distribution on $\mathbb{Z}^d$. Given any discrete planar tree $\mathcal{T}$ rooted at $\varnothing$, we define a $\mathbb{Z}^d$-valued random process $V_{\mathcal{T}} = (V_{\mathcal{T}}(u))_{u \in \mathcal{T}}$ as follows. Assign to each edge $e$ of $\mathcal{T}$ an independent random variable $X(e)$, all distributed according to $\theta$. Set $V_{\mathcal{T}}(\varnothing) := 0$, and for any $u \ne \varnothing$, define $V_{\mathcal{T}}(u)$ as the sum of $X(e)$ over the edges $e$ along the unique simple path in $\mathcal{T}$ from $\varnothing$ to $u$. We refer to $V_{\mathcal{T}}$ as a BRW  indexed by $\mathcal{T}$.

Let $\mu=(\mu(k))_{k\ge 0}$ be a probability distribution on $\z_+$ with $\mu(1)\neq1$ and $\sum_{k=0}^\infty   k\mu(k)=1. $ Let $\T^c$ be a critical Galton-Watson tree with offspring $\mu$; that is, starting at the root $\varnothing$, each particle branches into $k\ge 0$ particles with probability $\mu(k)$, independently. It is well known that $\T^c$ is almost surely finite.
Let $\T^{(c,n)}$ be distributed as $\parsof{\T^c}{\#\T^c=n}$, where we consider only those $n$ such that $\mathbb P(\#\T^c=n)>0$. Denote its range by
\[
{\mathcal R}^{(c)}_n:=\setof{V_{\T^{(c,n)}}(u)}{u\in\T^{(c,n)}}.
\]
Le Gall and Lin \cite{LeGall-Lin-lowdim, LeGall-Lin-range} and Zhu \cite{Zhu-cbrw} established the precise asymptotic behavior of $\#{\mathcal R}^{(c)}_n$ under mild regularity assumptions on $\theta,\mu$,
\begin{equation}\label{LG-Lin}
\begin{cases}
  \frac{1}{n} \#{\mathcal R}^{(c)}_n \xrightarrow{\mathrm{(p)}} c_{\theta, p, d}, & \text{if } d \ge 5, \\
  \frac{\log n}{n} \#{\mathcal R}^{(c)}_n \xrightarrow{L^2} c_{\theta, p, d}, & \text{if } d = 4, \\
  n^{-d/4} \#{\mathcal R}^{(c)}_n \xrightarrow{(d)} \lambda_d(\mathfrak{R}), & \text{if } d \le 3,
\end{cases}
\end{equation}
where $c_{\theta, \mu, d} > 0$ is a constant, $\lambda_d$ denotes the Lebesgue measure on $\mathbb{R}^d$, and $\mathfrak{R}$ denotes the support of the rescaled integrated super-Brownian excursion (ISE). In particular, the dimension $d=4$ plays the role of a critical dimension for the growth of $\#{\mathcal R}^{(c)}_n$. 

It is a natural question to investigate the behavior of $\#{\mathcal R}^{(c)}_n-\mathbb E[\#{\mathcal R}^{(c)}_n]$ when $d \ge 4$, and in particular to seek a  CLT  at least in high dimensions. 
In this work, we establish a CLT for the range of a  BRW  $V_{\T_\infty}$ indexed by the Kesten tree $\T^\infty$, the critical Galton–Watson tree \( \T^c \) conditioned to be infinite. The Kesten tree $\T^\infty$ arises as the local limit of \( \T^c \) conditioned on having large height or size.  There is a close relationship between the range of \( V_{\T^\infty} \) and ${\mathcal R}^{(c)}_n$, see  Le Gall \cite{LeGall10} and Zhu \cite{Zhu-cbrw} for more specific explanation and application.

The CLT appears to be an open problem, and remains difficult even in the simplest setting, where $\mu$ is geometric and $\theta$ is a step function of a simple random walk, i.e.
\begin{equation}\label{eq:assmption}
\begin{cases}   
 \mu(k)=2^{-k-1}, & \text{ for all } k\ge 0; \\
\theta(v)=\frac 1{2d}, &\text{ for all } v\in\z^d, \, |v|=1.
 \end{cases}
\end{equation}
In present paper, we adopt this choice for $(\mu,\theta)$ and construct the infinite model $(\T^\infty,V_{\T^\infty})$ as follows.  \begin{enumerate}
\item
Let $\T_0^{\pm},\T_1^{\pm},\dots$ be i.i.d. with the same law as $\T^c$. 
\item Let $\Sp:=\{\varnothing_0,\varnothing_1, \varnothing_2, \cdots\}$, with $\varnothing_0:=\varnothing$,  be an infinite half-line called the spine. For each $i\ge 0$, attach $\T_i^{-}$ and $\T_i^{+}$ to $\varnothing_i$ on the left and right sides of $\Sp$, respectively.
\item Denote $\T^{+}:=\T_0^{+}\cup\T_1^{+}\cup\dots$, $\T^{-}:=\T_0^{-}\cup\T_1^{-}\cup\dots$ and $\T^\infty=\T^+\cup\T^-$. Let $V_{\T^\infty}$ be the corresponding branching random walk.
\item Explore the tree $\T^\infty$ in the depth-first order by
\[
...., u_{-2}, u_{-1}, u_0,u_1, u_2 \dots,
\]
as illustrated in Figure \ref{fig1}, and denote
\[
V(k):=V_{\T^\infty}(u_k)\in\mathbb Z^d, \qquad k \in \z.
\]
  \end{enumerate}

The BRW $(V(k))_{k\in \z}$ is the two-sided version of the discrete snake studied by Le Gall and Lin \cite{LeGall-Lin-range}. 
As is shown in \cite{LeGall-Lin-range, zhu2017critical, bai-wan}, $V$ is invariant under translations in the sense that \begin{equation}\label{eq:invariance}
(V(i+j)-V(i))_{j\in\z}\overset{d}{=}(V(j))_{j\in\z}, \qquad \forall \, i\in \z.
\end{equation}

The natural counterpart for $\#{\mathcal R}^{(c)}_n$ on $\T^\infty$ is 
\begin{equation}   
R_n:= \#\{V(1),\dots,V(n)\}=\sum_{i=1}^n \1_{\{V(i)\not\in\{V[1,i)\}}, \label{def-Rn}
 \end{equation}
where $V[i,j):=\setof{V(k)}{k\in [i,j)\cap \z}$ with similar notations for $V[i, j]$, $V(i, j]$, and $V(i,j)$. Indeed, $R_n$ coincides with the size of the range of the discrete snake studied in \cite{LeGall-Lin-range}. 
In this paper, we study a modified version of it,
\begin{equation}    
Y_n:=\sum_{i=1}^n \1_{\{V(i)\not\in V(-\infty,i)\}}=\#(V[1,n]\setminus V(-\infty,0]), \label{def-Yn}
\end{equation}
where, by \eqref{eq:invariance},  the summands $(\1_{V(i)\not\in V(-\infty,i)})_{i\in\z}$ are identically distributed, in other words, forming a stationary sequence. A  slightly modified version of the  ergodic theorem in Le Gall and Lin \cite[Theorem 4]{LeGall-Lin-range} states that for $d\ge 5$, as $n\to\infty$, \begin{equation}    \frac{Y_n}{n} \, \xrightarrow{L^1}\, c_d, \label{EYn} \end{equation}
where $c_d:= \p(0 \not\in V(-\infty, 0))$, and  $c_d$ is positive due to Benjamini and Curien \cite{benjamini-curien}. 

We aim to prove a CLT for $Y_n$. The main result is stated below.  Denote 
\[
\F_0:=\sigma\{V(0), V(-1), V(-2), ...\}.
\]

\begin{theorem}\label{T:CLT-Yn} 
Assume \eqref{eq:assmption}. When $d>8$, there is a constant $\kappa=\kappa_d>0$ such that
\begin{equation}    
\lim_{n\to\infty}\frac{\Var(Y_n)}{n}=\kappa. 
\label{kappaexistspositive}
\end{equation}
When $d>16$, for every continuous real function $\varphi$  satisfying $\sup_{x\in\r}|\varphi(x)/(1+x^2)|<\infty$, we have
\begin{align}\label{eq:main2}
\e \left|\e \Big[ \varphi\left(\frac{ Y_n- \e[Y_n]}{\sqrt{n}}\right) \,\big|\, {\F_0}\Big] - \e[\varphi({\tt g}_\kappa)] \right| \to 0, \quad n\rightarrow\infty,
\end{align}
where  ${\tt g}_\kappa\sim {\mathcal N}(0, \kappa)$ denotes a centered Gaussian random variable with variance $\kappa$. 
In particular, $\frac{ Y_n- \e[Y_n]}{\sqrt{n}}$ converges in law to $\mathcal N(0,\kappa)$. 
\end{theorem}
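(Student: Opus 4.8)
The plan is to apply the martingale-type central limit theorem of Dedecker and Merlev\`ede~\cite{dedecker02} to the stationary sequence $\xi_i := \1_{\{V(i)\not\in V(-\infty,i)\}} - c_d$, for which $Y_n - \e[Y_n] = \sum_{i=1}^n \xi_i$. That criterion, in its conditional form, delivers exactly the convergence \eqref{eq:main2} provided one verifies: (i) the normalized variances $\Var(Y_n)/n$ converge to a finite limit $\kappa$; and (ii) a suitable $L^1$-convergence of conditional expectations, namely that $\frac1n \e\big[(\sum_{i=1}^n\xi_i)^2 \,\big|\, \F_0\big]$ and the partial-sum projections onto $\F_0$ behave as required (this is the place where the stationarity under \eqref{eq:invariance} and the decay of correlations enter). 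So the first main task is \eqref{kappaexistspositive}: write $\Var(Y_n) = \sum_{i,j} \Cov(\xi_i,\xi_j)$ and show $\sum_{k\in\z}|\Cov(\xi_0,\xi_k)| < \infty$ when $d>8$, which gives existence of $\kappa = \sum_{k\in\z}\Cov(\xi_0,\xi_k) \ge 0$; positivity of $\kappa$ is a separate, softer argument (e.g. ruling out a coboundary representation, or a direct lower bound on a well-chosen block).

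The heart of the matter, and where \eqref{eq:assmption} is genuinely used, is the covariance and higher-moment estimates. The event $\{V(i)\not\in V(-\infty,i)\}$ depends on the whole past, so I would introduce a truncation: replace $V(-\infty,i)$ by $V(i-m,i)$ for a growing window $m=m(n)$, exploiting that the tree $\T^\infty$ has ``local independence'' — the subtrees hanging off the spine are i.i.d.\ copies of $\T^c$, which are a.s.\ finite, so the depth-first labels $u_{i-m},\dots,u_i$ typically live on a bounded portion of the spine and the error from truncation is controlled by transience-type estimates for the BRW range in $d\ge 5$ (Benjamini--Curien~\cite{benjamini-curien}, Le~Gall--Lin~\cite{LeGall-Lin-range}). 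After truncation, $\Cov(\xi_i,\xi_j)$ reduces to an intersection probability between two ranges of BRWs that become independent once $|i-j|$ exceeds the (random, light-tailed) spine-distance between the relevant blocks; bounding $\p(0\in V[1,n])$ and the joint occupation probabilities by Green's function sums for the branching random walk in $\z^d$ is what forces $d>8$ (two-range intersection $\sim$ a random walk in effective dimension $d$ meeting a ``$d/2$-dimensional'' ISE-like set) for summable covariances, and $d>16$ for the fourth-moment / Lindeberg-type bound needed to upgrade to the CLT.

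Concretely the steps are: (1) set up the stationary sequence and record \eqref{eq:invariance}; (2) prove the truncation lemma, quantifying $\e|\1_{\{V(i)\not\in V(-\infty,i)\}} - \1_{\{V(i)\not\in V(i-m,i)\}}|$; (3) estimate $\Cov(\xi_0,\xi_k)$ via intersection-of-ranges probabilities and Green's-function bounds, summing over $k$ to get \eqref{kappaexistspositive} for $d>8$; (4) establish positivity of $\kappa$; (5) run the recursive moment-control scheme to bound $\e[(Y_n-\e Y_n)^4]$ and the conditional-variance fluctuations by $O(n^2)$-type quantities with the right error terms, which needs $d>16$; (6) verify the Dedecker--Merlev\`ede conditions (the $L^1$ convergence of $\e[(\sum\xi_i)^2\mid\F_0]/n \to \kappa$ and the negligibility of the conditional cross terms) and conclude \eqref{eq:main2}; the in-law statement then follows by taking $\varphi$ bounded continuous and using dominated convergence. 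I expect step (3)–(5), the passage from ``summable covariances'' to the genuinely higher-order estimates controlling conditional moments — i.e.\ making the recursive moment method close in the BRW setting where increments are dependent — to be the main obstacle, with the truncation in step (2) the key technical device that makes it tractable.
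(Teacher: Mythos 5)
Your overall frame does match the paper's: reduce \eqref{eq:main2} to the Dedecker--Merlev\`ede conditions for the stationary sequence $(\1_{\{V(i)\notin V(-\infty,i)\}})_i$, get \eqref{kappaexistspositive} from summable covariances when $d>8$, and control the conditional second moment and uniform integrability via a truncation plus a fourth-moment recursion when $d>16$. But two steps of your plan have genuine gaps. First, your truncation replaces $V(-\infty,i)$ by the \emph{time} window $V(i-m,i)$, and you then claim (approximate) independence once $|i-j|$ is large, invoking a ``light-tailed spine-distance''. This is not correct as stated: in the depth-first order, $|i-j|$ large does not force $u_i$ and $u_j$ to be far apart in the tree --- for instance the times just before and just after the exploration of a single large subtree $\T^+_0$ (whose size has a \emph{polynomial} tail, $\p(\#\T^+_0>n)\sim cn^{-1/2}$) sit at the same spine vertex, so the two truncated indicators share edge displacements and are not independent. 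The paper's device is different and is what makes everything close: truncate at \emph{graph distance} $k$ on the tree (the variables $\xi^k_i$ of \eqref{xi-ki}), which yields exact independence as soon as $\d(u_i,u_j)\ge k_1+k_2$ (Lemma \ref{lem:independent_xi}), quantify the truncation error by Lemma \ref{lem:xi_to_infty}, and control separately the event that far-apart times are nevertheless close in the tree via the contour process (Lemma \ref{lem:estimate-contour}, used in Lemmas \ref{lem:cov} and \ref{l:cov2}). Without this (or an equivalent replacement) your steps (3), (5) and (6) do not go through as described.

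Second, the positivity of $\kappa$ is part of the statement, and your step (4) does not contain an argument: summable covariances only give $\kappa\ge 0$, and ``ruling out a coboundary representation'' or ``a direct lower bound on a well-chosen block'' is not obviously feasible for this indicator functional of the entire past. In the paper this is a substantial separate argument (Section \ref{subsection:kappa>0}): one introduces ``bad points'', deletes them to produce an auxiliary exploration $\widehat Y_n$ which is \emph{independent} of the geometric counts $N_n$ of deleted points (Lemma \ref{lem:YN}), and then shows that $\kappa=0$ would contradict the linear growth of $\e[Y_n]$ from Le Gall--Lin; this is an adaptation of an idea of Asselah--Schapira--Sousi, not a soft step. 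So while your architecture (Dedecker--Merlev\`ede, truncation, covariance summability for $d>8$, recursive fourth-moment bound for $d>16$) is the right one and coincides with the paper's, the proposal is missing the key locality mechanism that produces independence after truncation, and it is missing the proof of $\kappa>0$ altogether.
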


The proof of \eqref{eq:main2} relies on an application of CLT for sums of stationary sequences, adapted from Dedecker and Merlev\`ede~\cite{dedecker02} as follows. 
For every real-valued random variable $X$ with finite mean, denote 
\[
\braces{X}:=X-\e[X].
\]
By \cite[Theorem 1]{dedecker02}, \eqref{eq:main2}
follows once we show the existence of a positive constant $\kappa$ such that \begin{align} 
&\left\| \e\Big[\frac{\braces{Y_n}}{\sqrt{n}} \big| \F_0\Big]  \right\|_1 \to  0, \qquad n \to\infty; \label{cond-a}
\\
& \left\| \e\bracksof*{\frac{\braces{Y_n}^2}{n} -\kappa}{\F_0}\right\|_1 \to 0, \qquad n \to\infty, \label{cond-b}\\
&\left(\frac{\braces{Y_n}^2}{n}\right)_{n\ge 1} \mbox{ is uniformly integrable.} \label{cond-c} 
\end{align}

In the proofs of  \eqref{cond-a}, \eqref{cond-b} and \eqref{cond-c}, we make heavy use of the translational invariance property  in \eqref{eq:invariance}. The main difficulty arises from the lack of the independence and Markov property in $(Y_n)$. Our main idea is to apply  a truncation technique to the summand in $Y_n$, where we replace $\1_{\{V(i)\not\in V(-\infty,i)\}}$ by a local quantity $\xi_i^k$, see \eqref{xi-ki}. This approximation is sufficiently accurate, as is shown in Lemma \ref{lem:xi_to_infty}, while the locality property ensures that $\xi_i^k$ and $\xi_{i'}^{k'}$ become independent whenever the graph distance between $u_i$ and $u_{i'}$ exceeds $k+k'$, see Lemma \ref{lem:independent_xi}.

\begin{rem}
\label{rmk1.2}
(i) We show in Lemma \ref{lem:condition_1} (resp. Lemma \ref{Lemma:cond-c}) that \eqref{cond-a} (resp. \eqref{cond-b}) is satisfied whenever $d>10$ (resp. $d>12$). Establishing \eqref{cond-c} is more technical, where we estimate the fourth moment of $\braces{Y_n}$, which requires $d>16$; see Proposition \ref{P:4thmoment}. 

(ii)
Heuristically, our estimate of the fourth moment requires that four pieces of independent BRWs stay ``untangled'' of each other;  each BRW has Hausdorff dimension $4$, thus we need $d>4\times 4$. 
We expect that to improve this condition, one needs to prove \eqref{cond-c} without using fourth moment.

(iii) The exploration sequence $(u_k)$ can visit the same vertex several times, and that it takes $2n$ times to complete a subtree $\T^{a}_i$ if $\#\T^a_i=n$, $a\in \{+, -\}$. 
In other words, we use the contour process (see Section \ref{sec:2.1}) 
to encode the infinite tree $\T^\infty$, this choice provides several simplifications,  such as the locality in the graph distance, see e.g.   Lemma \ref{lem:xi_to_infty}.  
 
(iv) The displacement distribution $\theta$ can be replaced by more general step distributions, after slight modification of Section \ref{subsection:kappa>0}. We choose this simplest case for the sake of brevity. However, the geometric distribution $\mu$ plays a more essential role in our arguments: it is only when $\mu$ is geometric that the Kesten tree can be constructed as the union of i.i.d. trees $(\T^\pm_n)_{n\ge 0}$ distributed as $\T^c$.

(v) The stationarity in the summands of $Y_n$ may be important beyond purely technical considerations. Indeed, we are inclined to believe that the CLT for $R_n$ (in \eqref{def-Rn}), if it holds at all, does not yield a gaussian limit even in the sufficiently high dimensional cases.  More precisely,  it is not hard to show that $\upsilon:=\limsup_{n\rightarrow\infty}\frac 1{\sqrt n}\e[R_n-Y_n]\in [0,\infty)$ is finite, and we conjecture that 
\begin{align*}
\kappa':=\lim_{n\rightarrow\infty}\frac 1 n\Var(R_n)>0\text{ exists},  \text{ and }\kappa'\ne \kappa.
\end{align*}
Admitting this conjecture, if $R_n$ has CLT with a gaussian limiting distribution,
by the fact that $\frac{R_n-\e[R_n]}{\sqrt n}\ge \frac{Y_n-\e[Y_n]}{\sqrt n}-\frac{\e[R_n]-\e[Y_n]}{\sqrt n}$,
we would have a random variable distributed as $\mathcal N(0,\kappa')$ stochastically dominating another one distributed as $\mathcal N(-\upsilon,\kappa)$, which is impossible.

(vi) We expect that the methods developed in the present work, including the truncation technique, can also be adapted to study other natural quantities related to the range of the BRW, such as its capacity.

 \end{rem}

The rest of the paper is organized as follows. In Section \ref{s:pre}, we collect some estimates on $\T^\infty$ and $V_{\T^\infty}$. Section \ref{s:var} is devoted to the study of $\mbox{Var}(Y_n)$, where we prove the existence of $\kappa=\lim_{n\to\infty} \frac1{n} \mbox{Var}(Y_n)$ in Section \ref{subsection:kappaexists}, and show that $\kappa>0$ is not trivial in 
Section \ref{subsection:kappa>0}.  The conditions \eqref{cond-a} and \eqref{cond-b} are proved in Section \ref{s:conda+b}, and finally,  we prove \eqref{cond-c} in Section \ref{s:cond-c}. 

For notational brevity,  we write 
$V({\tt t})= \{V(u): u \in {\tt t}\}$ for any ${\tt t} \subset \T^\infty$. For instance, we have $V(\T^-)=V(-\infty, 0]$. 
 We write $f(k) \lesssim_\beta g(k)$ if for some fixed parameter $\beta$, there exists a constant $c=c_\beta>0$ such that $f(k) \le c \,g(k)$ for all $k$, and $f(k)\lesssim g(k)$ if $c$ is a numeric constant not depending on any parameter except for the dimension $d$. 

\section{Preliminaries}\label{s:pre}

\subsection{The contour process}\label{sec:2.1}
For any vertex $u\in \T^\infty$, we define its parent as 
the adjacent vertex on the unique simple path from $u$ to infinity, and when $v$ is the parent of $u$, we say $u$ is a child of $v$. For instance,  $\varnothing_{k+1}$ is the parent of $\varnothing_k$.    
Given $\T^\infty$, we define its contour process $C$ by taking $\C{0}=0$ and for every $n\in\z$,
\begin{equation}\label{eq:C}
\C{n+1}-\C{n}=\begin{cases}
1,&\text{if $u_{n+1}$ is a child of $u_n$}\\
-1,&\text{if $u_{n+1}$ is the parent of $u_n$}
\end{cases},
\end{equation}
as is illustrated in Figure \ref{fig1}.

\begin{figure}[ht]
\includegraphics[height=4.8cm]{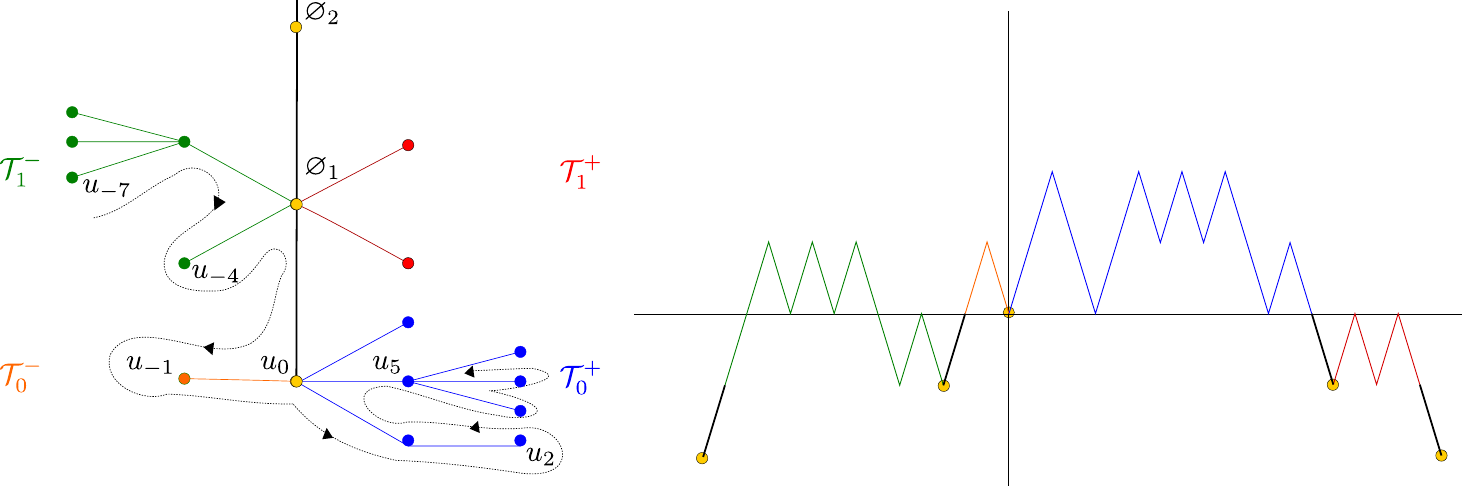}
\caption{\small The infinite tree $\T^\infty$ and its contour process. In the picture,   $u_{-2}=u_0=\varnothing$, $u_{-3}=u_{-5}=\varnothing_1$.}
\label{fig1}
\end{figure}

\begin{remark}\label{rmk:determine}
It is easy to deduce from \eqref{eq:C} and Figure \ref{fig1} that 
\begin{enumerate}
\item $(\C{n})_{n\in\z}$ determines $\T^\infty$;
\item the graph distance between $u_i,u_j$ ($i\le j$) is 
\begin{align}\label{eq:determine}
\d(u_i,u_j)=\C{i}+\C{j}-2\min_{i\le k\le j}\C{k}.
\end{align}
\end{enumerate}
See  Marckert and Mokkadem \cite{marckert-mokkadem} and Le Gall \cite[Section 1.1]{LeGall05} 
for this construction on $\z_+$, but there is no difficulty in extending to $\z$.
\end{remark}

Given the assumption \eqref{eq:assmption} that offspring distribution $\mu$ is geometric, the following lemma is immediate:
\begin{lemma}\label{lem:C}
The contour process $(\C{n})_{n\in\z}$ is a two-sided simple random walk, that is, $(\C{n})_{n\ge 0}$ and $(\C{-n})_{n\ge0}$ are two independent simple random walks on $\z$.
\end{lemma}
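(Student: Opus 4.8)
The statement to prove is Lemma~\ref{lem:C}: under assumption \eqref{eq:assmption}, the contour process $(\C{n})_{n\in\z}$ decomposes into two independent simple random walks $(\C{n})_{n\ge0}$ and $(\C{-n})_{n\ge0}$. The plan is to trace through the construction of $\T^\infty$ step by step and read off the increments of $C$ directly. By Remark~\ref{rmk:determine}, $C$ is obtained by walking the tree $\T^\infty$ in depth-first order, recording $+1$ each time we step from a vertex to one of its children and $-1$ each time we step back to a parent. The key observation is that the left subtrees $\T_i^-$ ($i\ge0$) are explored entirely by the negative-time excursion $(\C{-n})_{n\ge0}$, the right subtrees $\T_i^+$ ($i\ge0$) are explored entirely by the positive-time excursion $(\C{n})_{n\ge0}$, and these two families of subtrees are independent by construction (step (1)); moreover the spine $\Sp$ itself contributes no net back-and-forth — we move up the spine exactly once on each side as the exploration proceeds, so the spine edges account for the long-run drift structure but do not couple the two sides.

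Concretely, I would first recall the classical fact (Marckert–Mokkadem \cite{marckert-mokkadem}, Le Gall \cite[Section 1.1]{LeGall05}) that the contour process of a single critical Galton–Watson tree with \emph{geometric} offspring distribution $\mu(k)=2^{-k-1}$, run until the tree is exhausted, is exactly a simple random walk started at $0$, reflected appropriately, killed at its first return to $-1$; equivalently, its increments are i.i.d.\ uniform on $\{+1,-1\}$ up to the killing time. This is the place where geometric $\mu$ is essential: the memorylessness of the geometric distribution is precisely what makes ``number of remaining children to visit'' a process with i.i.d.\ $\pm1$ steps. Then I would argue that concatenating the contour excursions of the i.i.d.\ trees $\T_0^+,\T_1^+,\dots$ along the spine — inserting the single $+1$/$-1$ bookkeeping steps for the spine vertices $\varnothing_0,\varnothing_1,\dots$ — yields, on the positive side, a bona fide two-sided-in-the-limit simple random walk $(\C{n})_{n\ge0}$; the point is that gluing i.i.d.\ SRW excursions end-to-end (with the spine steps accounting for the passages between consecutive excursions) reconstitutes an unkilled SRW, by the excursion decomposition of simple random walk at a level. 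The same argument applies verbatim to the negative side with $\T_i^-$, and independence of the two sides is inherited from the independence in step (1) of the construction together with the fact that the spine steps are deterministic given which side we are on.

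The main obstacle — really the only non-routine point — is making the ``gluing of i.i.d.\ excursions equals an unkilled SRW'' step precise in the two-sided infinite setting, i.e.\ checking that the bookkeeping of spine edges is done correctly so that no increment is double-counted or dropped at the junctions $\varnothing_i$, and that the resulting increment sequence on $\{n\ge0\}$ is genuinely i.i.d.\ uniform on $\{\pm1\}$ rather than merely a concatenation of excursions with a constrained structure. This amounts to the standard fact that a simple random walk on $\z$, decomposed into its successive excursions away from (and the unit steps across) a fixed level, and then reassembled, is the original walk; in the Kesten-tree picture each $\T_i^+$ supplies one such excursion below the level of $\varnothing_{i+1}$ and the step $\varnothing_i\to\varnothing_{i+1}$ supplies the crossing. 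Once this correspondence is set up, Lemma~\ref{lem:C} follows immediately, and I would keep the write-up short by citing \cite{marckert-mokkadem, LeGall05} for the single-tree case and only spelling out the concatenation/independence bookkeeping.
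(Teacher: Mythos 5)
Your proposal is correct and is exactly the standard argument the paper leaves implicit: the paper offers no proof, declaring the lemma immediate from the geometric offspring assumption, and your route (contour of a geometric critical GW tree $=$ SRW killed at first hitting of $-1$, concatenated along the spine via the strong Markov/excursion decomposition, with independence of the two sides inherited from the independence of $(\T_i^+)$ and $(\T_i^-)$) is precisely the justification intended. The only nitpick is phrasing ("first return to $-1$" should be "first hitting of $-1$", and the spine step $\varnothing_i\to\varnothing_{i+1}$ is a $-1$ since the parent of $\varnothing_i$ is $\varnothing_{i+1}$), which you in fact handle correctly in your concatenation bookkeeping.
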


Moreover,
we have the following fundamental estimates:
\begin{lemma} \label{lem:estimate-contour} For every $\gamma>2$, 
every $n, k\ge 1$,
\begin{align}     \p(\C{n}-2\min_{0\le i\le n}\C{i}\le  k) & \lesssim \, k^3 \, n^{-\frac 3 2}, \label{proba:C-2minC} \\
 \e[ (1+\C{n}-2\min_{0\le i\le n}\C{i})^{-\gamma}] & \lesssim_\gamma \, \begin{cases}
 n^{-\gamma/2}, & \mbox{if $\gamma< 3$}, \\
 n^{-3/2} \log (1+n),  & \mbox{if $\gamma= 3$}, \\
 n^{-3/2}, & \mbox{if $\gamma>3$}. \end{cases}
 \label{esp:C-2minC}
\end{align}
\end{lemma}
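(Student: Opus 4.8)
The plan is to establish the tail bound \eqref{proba:C-2minC} first, and then deduce the moment bound \eqref{esp:C-2minC} from it by a routine summation by parts.

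For \eqref{proba:C-2minC}, write $M_n:=\min_{0\le i\le n}\C i\le 0$ and $Z_n:=\C n-2M_n$. The starting point is a purely deterministic observation: on $\{Z_n\le k\}$ one has $2M_n\ge \C n-k\ge M_n-k$, hence $M_n\ge -k$, and then $\C n=Z_n+2M_n\le k$ together with $\C n\ge M_n\ge -k$ forces $\C n\in[-k,k]$. Consequently
\[
\{Z_n\le k\}\ \subseteq\ \{\C i\ge -k \text{ for all }0\le i\le n\}\cap\{\C n\in[-k,k]\}.
\]
Shifting the walk up by $k$ and using Lemma~\ref{lem:C}, this reduces the problem to a one-sided simple random walk started from $k$ and conditioned to stay nonnegative, namely
\[
\p(Z_n\le k)\ \le\ \sum_{b=0}^{2k}\p_k\!\big(\C i\ge 0\ \forall\, 0\le i\le n,\ \C n=b\big).
\]
I would then invoke the classical ballot/reflection estimate $\p_a(\C i\ge 0\ \forall\, i\le n,\ \C n=b)\lesssim (a+1)(b+1)\,n^{-3/2}$, valid for all $a,b\ge 0$ and $n\ge 1$ (it follows from the reflection principle combined with $\sup_x\p(\C n=x)\lesssim n^{-1/2}$). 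Summing over $b\in\{0,\dots,2k\}$ with $a=k$ gives $\p(Z_n\le k)\lesssim (k+1)\sum_{b=0}^{2k}(b+1)\,n^{-3/2}\lesssim k^3 n^{-3/2}$, which is \eqref{proba:C-2minC} (the inequality is trivial when $k>\sqrt n$). As a sanity check, this is the correct order: it matches the Brownian scaling $\p\big(B_1-2\min_{[0,1]}B\le\varepsilon\big)\asymp\varepsilon^3$ as $\varepsilon\to 0$, with $\varepsilon=k/\sqrt n$.

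For \eqref{esp:C-2minC}, one first upgrades the previous bound to $\p(Z_n\le k)\lesssim (1+k)^3n^{-3/2}\wedge 1$ for all $k\ge 0$ (the case $k=0$ being the ballot estimate with $a=b=0$). Since $Z_n\ge 0$ is integer-valued, summation by parts yields
\[
\e\big[(1+Z_n)^{-\gamma}\big]=\sum_{k\ge 0}\big((1+k)^{-\gamma}-(2+k)^{-\gamma}\big)\,\p(Z_n\le k)\ \asymp\ \sum_{k\ge 0}(1+k)^{-\gamma-1}\,\p(Z_n\le k).
\]
Plugging in the bound on $\p(Z_n\le k)$ and splitting the sum at $k\asymp\sqrt n$ produces the three regimes: for $k\lesssim\sqrt n$ the summand is $\lesssim (1+k)^{2-\gamma}n^{-3/2}$, whose partial sum is $O(n^{-3/2})$, $O(n^{-3/2}\log n)$, or $O(n^{(3-\gamma)/2}n^{-3/2})=O(n^{-\gamma/2})$ according as $\gamma>3$, $\gamma=3$, or $\gamma<3$; and the tail $k\gtrsim\sqrt n$ contributes $\sum_{k\gtrsim\sqrt n}(1+k)^{-\gamma-1}\lesssim n^{-\gamma/2}$. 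Keeping the dominant term in each case gives \eqref{esp:C-2minC}.

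The only genuinely non-formal point is the passage from $\{Z_n\le k\}$ to a bound of the right order. Crude use of a single marginal — for instance $\p(\C n\in[-k,k])\lesssim k\,n^{-1/2}$, or $\p(\min_{0\le i\le n}\C i\ge -k)\lesssim k\,n^{-1/2}$ — loses powers of $k/\sqrt n$, so one must exploit the two endpoint constraints simultaneously via the two-parameter ballot estimate and sum it across the $O(k)$-wide strip; this is precisely what produces the exponent $3$ rather than $1$ or $2$. Everything else (the deterministic inclusion and shift, the summation by parts, and the case analysis around the threshold $\gamma=3$) is routine. An alternative proof of \eqref{proba:C-2minC} decomposes the path at the first time its minimum is attained and time-reverses the first piece, but the inclusion-plus-shift argument above is shorter.
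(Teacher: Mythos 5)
Your proof is correct, and it takes a genuinely different route from the paper. The paper does not re-derive the tail bound from scratch: it quotes the exact identity from Le Gall--Lin (equation (32) of \cite{LeGall-Lin-range}), namely $\p({\mathcal M}=m)=\frac{2(m+1)^2}{n+m+2}\,\p(C_n=m)$ for ${\mathcal M}=C_n-2\min_{0\le i\le n}C_i$, then gets \eqref{proba:C-2minC} by summing over $m\le k$ with $\max_m\p(C_n=m)\lesssim n^{-1/2}$, and gets \eqref{esp:C-2minC} by writing $\e[(1+{\mathcal M})^{-\gamma}]=\sum_m\frac{2(m+1)^{2-\gamma}}{n+m+2}\p(C_n=m)$ and splitting the sum at $m=\lfloor n^{1/2}\rfloor$. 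You instead prove the tail bound pathwise: the inclusion $\{C_n-2M_n\le k\}\subseteq\{\min_i C_i\ge -k\}\cap\{C_n\in[-k,k]\}$ (which is verified correctly), a shift by $k$, and the classical bridge-staying-nonnegative estimate $\p_a(\min\ge 0,\,C_n=b)\lesssim (a+1)(b+1)n^{-3/2}$, then deduce the moment bound by Abel summation from $\p(Z_n\le k)\lesssim ((1+k)^3n^{-3/2})\wedge 1$; the case analysis at $\gamma=3$ comes out the same way as in the paper. Your route is self-contained modulo that ballot-type estimate and avoids the Le Gall--Lin identity, at the cost of being slightly longer; the paper's route is shorter given the citation and in fact yields the exact law of ${\mathcal M}$, not just bounds. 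The only point you should flesh out is the justification of $\p_a(\min\ge 0,\,C_n=b)\lesssim(a+1)(b+1)n^{-3/2}$: the reflection principle alone gives the one-sided bound $\p_a(\min_{[0,m]}\ge 0)\lesssim(a+1)m^{-1/2}$, and the stated two-parameter bound then follows by splitting $[0,n]$ into three pieces, applying the one-sided bound to the first and (after time reversal from $b$) to the last, and the local bound $\sup_x\p(C_m=x)\lesssim m^{-1/2}$ to the middle; as written, ``reflection plus the local bound'' hides this splitting/time-reversal step, though the estimate itself is standard and true.
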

\begin{proof}  Let ${\mathcal M}:= \C{n}-2\min_{0\le i\le n}\C{i}$. 
By \cite[equation (32)]{LeGall-Lin-range},
for any $m\ge 0$, $$ \p({\mathcal M}=m)= \frac{2(m+1)^2}{n+m+2} \p(C_n=m).$$

For \eqref{proba:C-2minC}, it is enough to sum over $m$ and use the well-known fact that $$ \max_{m\in \z} \p(C_n =m) \lesssim n^{-1/2}.$$

Moreover,
\begin{align*} \e[ (1+ {\mathcal M})^{-\gamma}] 
&= \sum_{m=0}^\infty \frac{2(m+1)^{2-\gamma}}{n+m+2} \p(C_n=m)
\\
& 
\lesssim  n^{-3/2} \sum_{m=0}^{\lfloor n^{1/2}\rfloor} (m+1)^{2-\gamma}+ \frac1{n} \e \bracks*{ (1+C_n)^{2- \gamma} \1_{\{C_n \ge \lfloor n^{1/2}\rfloor\}}}
\\
&\lesssim
n^{-3/2} \sum_{m=0}^{\lfloor n^{1/2}\rfloor} (m+1)^{2-\gamma}+ n^{-\gamma/2}.   \end{align*}
This yields \eqref{esp:C-2minC}.    \end{proof}

\subsection{Estimates on {$\T^\infty$}{}.} In this section, we introduce some notation and present  useful estimates on $(\T^\infty,V_{\T^\infty})$.  
Let $\d$ denote the graph distance on $\T^\infty$. 
For $j, \ell\ge 0$, denote by $Z_\ell^{(j)}$ the population of the $\ell$-th generation of $\T^-_j$. We write $Z_\ell=Z^{(0)}_\ell$ for brevity, then $(Z_\ell)_{\ell\ge0}$ is a  Galton-Watson process with  $\e[Z_\ell]=1$. 

\begin{lemma}\label{lem:moment_geo}
For every fixed $p\ge1$ and for all $n\ge 1$,
\begin{align}    
\mathbb E\bracks*{\pars*{\sum_{i\le 0}\1_{\{\d(u_i,u_0)\le n\}}}^p} 
&\lesssim_p \,  n^{2p},  \label{eq:moment_geo1}
\\
\mathbb E\bracks*{\pars*{\sum_{i\le 0}\1_{\{\d(u_i,u_0)= n\}}}^p} 
&\lesssim_p \,  n^{p}. \label{eq:moment_geo2}
\end{align}
\end{lemma}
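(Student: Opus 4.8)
The plan is to reduce both estimates to the contour process $(\C{n})_{n\in\z}$ via the distance formula \eqref{eq:determine}, and then to combinatorial sums over $p$-tuples of indices that are controlled by Lemma \ref{lem:estimate-contour}. First I would rewrite the left side of \eqref{eq:moment_geo1}: expanding the $p$-th power,
\[
\e\bracks*{\pars*{\sum_{i\le 0}\1_{\{\d(u_i,u_0)\le n\}}}^p}=\sum_{i_1,\dots,i_p\le 0}\p\pars*{\d(u_{i_1},u_0)\le n,\dots,\d(u_{i_p},u_0)\le n}.
\]
By symmetry it suffices to bound $p!$ times the sum over $0\ge i_1\ge i_2\ge\cdots\ge i_p$. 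For such an ordered tuple, the event $\{\d(u_{i_p},u_0)\le n\}$ is the \emph{most restrictive} of the $p$ conditions on the walk $(\C{-m})_{m\ge0}$, because by \eqref{eq:determine} one has $\d(u_{i},u_0)=\C{i}+\C{0}-2\min_{i\le k\le 0}\C{k}$, and $j\mapsto \C{j}-2\min_{j\le k\le 0}\C{k}$ is non-decreasing as $j$ decreases. So I would bound the joint probability by $\p(\d(u_{i_p},u_0)\le n)$, which by Lemma \ref{lem:C} and \eqref{proba:C-2minC} (applied to the one-sided walk $(\C{-m})_{m\ge 0}$) is $\lesssim n^3 |i_p|^{-3/2}$, and is of course $\le 1$. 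Summing $\sum_{|i_1|\le\cdots\le|i_p|}\min(1, n^3|i_p|^{-3/2})$: the inner sum over $i_1,\dots,i_{p-1}$ gives a factor $|i_p|^{p-1}$, and then $\sum_{m\ge 1} m^{p-1}\min(1,n^3 m^{-3/2})\lesssim_p n^{2p}$ (split at $m\asymp n^2$; the head contributes $\sum_{m\le n^2} m^{p-1}\asymp n^{2p}$ and the tail $n^3\sum_{m> n^2} m^{p-5/2}\asymp n^3\cdot n^{2(p-3/2)}=n^{2p}$, valid since $p-3/2>0$ for $p\ge1$... one should check $p=1$ separately, where the tail is $n^3\sum_{m>n^2}m^{-3/2}\asymp n^3\cdot n^{-1}=n^2$). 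This yields \eqref{eq:moment_geo1}.

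For \eqref{eq:moment_geo2} I would argue identically, replacing $\{\d(u_i,u_0)\le n\}$ by $\{\d(u_i,u_0)=n\}$. The only change is the bound on the single-index probability: I would use $\p(\d(u_{i_p},u_0)=n)\le \p(n-1<\d(u_{i_p},u_0)\le n+1)$ and note that from the explicit formula $\p({\mathcal M}=m)=\frac{2(m+1)^2}{n+m+2}\p(C_n=m)\lesssim (m+1)^2 \p(C_{|i_p|}=m)\cdot|i_p|^{-1}$ used in the proof of Lemma \ref{lem:estimate-contour} (with the roles of the two parameters adapted), summing over the two or three relevant values of $m\asymp n$ gives $\p(\d(u_{i_p},u_0)=n)\lesssim n^2 \,|i_p|^{-1}\max_m\p(C_{|i_p|}=m)\lesssim n^2|i_p|^{-3/2}$, hence also $\lesssim \min(1, n^2|i_p|^{-3/2})$. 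Then $\sum_{m\ge1} m^{p-1}\min(1,n^2 m^{-3/2})\lesssim_p n^{2p/3}\cdot$ — wait, this should be split at $m\asymp n^{4/3}$, giving head $\sum_{m\le n^{4/3}} m^{p-1}\asymp n^{4p/3}$, which is too large; so instead I would keep the bound $\le 1$ only where forced and otherwise use that $\d(u_{i_p},u_0)=n$ forces, in particular, $\max_{i_p\le k\le 0}\C{k}-\min_{i_p\le k\le 0}\C{k}\ge n/2$, an event whose probability is $\lesssim \exp(-cn^2/|i_p|)$ for $|i_p|\le n^2$ and that only the range of $\lesssim |i_p|\le\cdots$ no — the cleanest route is: condition on the height $h:=\d(u_0,\text{root of its component})$ is not available; instead observe directly that $\{\d(u_i,u_0)=n \text{ for the $j$-th time as $i$ decreases}\}$ happens for at most finitely many $i$ with a geometric tail, which is exactly the content one extracts from $\C{i}-2\min_{i\le k\le 0}\C{k}$ being a (reflected) random walk: the number of $i\le0$ with $\C{i}-2\min_{i\le k\le0}\C{k}$ equal to a \emph{fixed} level $\ell$ has all moments bounded uniformly in $\ell$, and here we sum over the $O(1)$ levels $\ell\in\{n-1,n,n+1\}$. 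I would make this precise by the last-exit decomposition for the walk $(\C{-m})_{m\ge0}$ and a geometric-tail estimate for the number of visits to a level, which gives \eqref{eq:moment_geo2} directly with the factor $n^p$ coming from the $p$-fold product over the tuple after the single-index visit-count has been bounded by a constant.

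The main obstacle I anticipate is \emph{not} \eqref{eq:moment_geo1}, which is a routine summation, but getting the sharp exponent $p$ in \eqref{eq:moment_geo2}: the naive bound $\p(\d(u_i,u_0)=n)\lesssim n^2|i|^{-3/2}$ combined with brute-force summation overshoots, so one genuinely needs to exploit that, for an ordered tuple, once the distances are pinned to a common value $n$ the intermediate indices are confined to an excursion of the contour walk whose expected length contributes the ``correct'' polynomial factor — i.e. one should bound $\p(\d(u_{i_1},u_0)=\cdots=\d(u_{i_p},u_0)=n)$ jointly rather than by the single worst marginal. I would handle this by conditioning on the contour path between $u_{i_p}$ and $u_0$ and using that, given $\d(u_{i_p},u_0)=n$, the number of indices $i\in[i_p,0]$ with $\d(u_i,u_0)=n$ is stochastically dominated by a geometric variable (each such index corresponds to a return of the contour path to a fixed level before its first descent below it), so the full $p$-fold sum collapses to $\sum_{|i_p|\ge1}\p(\d(u_{i_p},u_0)=n)\cdot\e[(\#\{i: \d(u_i,u_0)=n\})^{p-1}\mid \d(u_{i_p},u_0)=n]\lesssim_p \sum_{m\ge1}\p(\d(u_{-m},u_0)=n)\lesssim_p n$, and then the symmetry factor $p!$ and the bookkeeping over which index is the extreme one contribute only constants, giving the claimed $n^p$ after accounting for the $p$ "free" choices — the precise combinatorics here is where I expect to spend the most care.
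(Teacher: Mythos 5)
Your argument for \eqref{eq:moment_geo1} is fine, and it is genuinely different from the paper's: you expand the $p$-th power, bound each joint probability by the single marginal $\p(\d(u_{i_p},u_0)\le n)\lesssim \min(1,n^3|i_p|^{-3/2})$ (this step is trivially valid because the intersection is contained in each event -- note, though, that your stated justification is false: $i\mapsto \C{i}-2\min_{i\le k\le 0}\C{k}$ is \emph{not} monotone as $i$ decreases, e.g.\ contour values $1,2,1,0$ at times $-3,\dots,0$ give distances $1,2,1,0$), and then sum; the paper instead bounds the count by $2\sum_{j\le n}\sum_{\ell\le n}Z^{(j)}_\ell$, uses the explicit geometric offspring to get $\e[Z_\ell^p]\lesssim_p \ell^{p-1}$, and applies Rosenthal's inequality over the independent spine subtrees. (Your route also only covers integer $p$ directly, but interpolation handles real $p\ge1$.)

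For \eqref{eq:moment_geo2}, however, there is a genuine gap, and it sits exactly where you said you would "spend the most care". Your proposed fix rests on the claim that, given $\d(u_{i_p},u_0)=n$, the number $N:=\#\{i\le 0:\d(u_i,u_0)=n\}$ is stochastically dominated by a geometric variable with moments bounded uniformly in $n$. This is false: within a single excursion of the reversed contour walk above its running minimum (i.e.\ within one subtree $\T^-_j$), the number of visits to relative height $n-j$, conditioned on that height being reached, is geometric with mean of order $n-j$, not $O(1)$ (equivalently, $\e[Z^{(j)}_{n-j}\mid Z^{(j)}_{n-j}>0]\asymp n-j$ by Kolmogorov's estimate), and about $n$ subtrees $j\le n$ contribute, so $\e[N]\asymp n$ and conditioning on a far index at distance $n$ only helps $N$ be large. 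Indeed your claimed collapse would give $\e[N^p]\lesssim \sum_m\p(\d(u_{-m},u_0)=n)\asymp n$, which already contradicts $\e[N^p]\ge(\e[N])^p\asymp n^p$ for $p\ge2$, so no amount of bookkeeping over "free choices" can rescue the computation as written; what you would actually need is $\e[N^{p-1}\mid \d(u_{i_p},u_0)=n]\lesssim_p n^{p-1}$ uniformly in $i_p$, which is essentially the statement to be proved. The paper avoids all of this by the deterministic bound $N\le\sum_{j=0}^{n}(Z^{(j)}_{n-j}+Z^{(j)}_{n-j+1})$, whose summands are independent over $j$ with $\e[Z_k]=1$ and $\e[Z_k^p]\lesssim_p k^{p-1}$; Rosenthal's inequality then gives $\e[N^p]\lesssim_p n\cdot n^{p-1}+n^p\lesssim n^p$. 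I suggest you adopt that decomposition (or prove the conditional moment bound above by the same subtree argument) for \eqref{eq:moment_geo2}.
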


Since $\T^-$ and $\T^+$ have the same distribution, the corresponding results hold if we replace  $\sum_{i\le  0}$ by $\sum_{i\ge  0}$ in \eqref{eq:moment_geo1} and \eqref{eq:moment_geo2}.

\begin{proof} We will use the following form of Rosenthal's inequality (\cite{Rosenthal}): Let $p>1, n\ge 1$,  and $\eta_0, ..., \eta_n$ be independent real-valued random variables with finite $p$-moments. Then \begin{equation}   
\e \Big[\Big|\sum_{i=0}^n \eta_i \Big|^p\Big] \le 2^{p^2} \max \Big( \sum_{i=0}^n \e [|\eta_i|^p], (\sum_{i=0}^n \e [|\eta_i|])^p\Big). \label{Rosenthal} \end{equation}

By Remark \ref{rmk1.2}(iii),
\[
 \sum_{i\le 0}\1_{\{\d(u_i,u_0)\le n\}}  \le 2\sum_{j=0}^n\sum_{\ell=0}^n Z_\ell^{(j)}.
\]

\noindent Applying \eqref{Rosenthal} to $\eta_j:= \sum_{\ell=0}^n Z_\ell^{(j)}$, for $0\le j \le n$, and noticing that $\e[\eta_j]= n+1$, we obtain  that $$
\e\Big[ \Big(\sum_{i\le 0}\1_{\{\d(u_i,u_0)\le n\}} \Big)^p\Big] \, \lesssim_p\, n \e \big[\eta_0^p\big] + n^{2 p}, $$

The proof of \eqref{eq:moment_geo1} reduces to show that for any $n,p\ge 1$, 
\begin{equation} \e \big[\eta_0^p\big] \lesssim_p n^{ 2p -1}.  \label{eta-pmoment} 
\end{equation}

Since our tree has explicit geometric offspring, we know (see,  for instance,  \cite[Section 1.4]{athreya2012branching}) that the generation function of $Z_\ell$ is 
\[
f_\ell(s):=\mathbb E[s^{Z_\ell}]=1-\frac{1-s}{(1-s)\ell+1},
\]
and by taking the first $p$-th derivatives we obtain,    for all $n\ge 1$,
\begin{align}\label{eq:moment02}
\mathbb E[Z_n^p]\lesssim_p n^{p-1}.
\end{align}

Notice that $(Z_\ell)_{\ell \ge 0}$ is a martingale. The Doob's $L^p$-inequality yields that $$ \e \Big[ \max_{0\le i \le n} Z_i^p \Big] \lesssim_p \e[Z_n^p]\lesssim_p  n^{p-1}.$$
It follows that $\e \big[\eta_0^p\big] \le (n+1)^p \e  \big[ \max_{0\le i \le n} Z_i^p \big] \lesssim n^{2p-1}$. This proves \eqref{eta-pmoment}, and completes the proof of  \eqref{eq:moment_geo1}.

For \eqref{eq:moment_geo2}, we note that for any $n\ge 1$, 
\begin{equation}  \sum_{i\le 0}\1_{\{\d(u_i,u_0)= n\}}
\le   \sum_{j=0}^n (Z^{(j)}_{n-j} + Z^{(j)}_{n-j+1}), \label{eq:d=n}
\end{equation} where the sum $Z^{(j)}_{n-j} + Z^{(j)}_{n-j+1}$ arises from the fact that, at each visit to a child of a vertex, the contour process returns. The proof of \eqref{eq:moment_geo2} reduces to show that 
$$ \e\Big[\Big( \sum_{j=0}^n Z^{(j)}_{n-j}\Big)^p\Big] \lesssim_p n^p. $$ 

Applying \eqref{Rosenthal} to $\eta_j= Z^{(j)}_{n-j}$ for $0\le j \le n$, we have $$ \e\Big[\Big( \sum_{j=0}^n Z^{(j)}_{n-j}\Big)^p\Big] \lesssim_p  \sum_{j=0}^n \e[(Z_{n-j})^p] + \Big(\sum_{j=0}^n \e[Z_{n-j}]\Big)^p  \lesssim_p n^p,$$
by using again \eqref{eq:moment02} and the fact that $\e[Z_{n-j}]=1$. This completes the proof of \eqref{eq:moment_geo2}.
\end{proof}

\begin{lemma} Let $d\ge 3$. We have for all $j\ge 1$, \begin{equation}  \p\Big(0\in V(\T^-_j)\Big) \lesssim j^{(2-d)/2}. \label{eq:0inVj}  \end{equation}   \end{lemma}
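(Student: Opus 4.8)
The plan is to use a plain first-moment bound. By Markov's inequality,
$$\p\big(0\in V(\T^-_j)\big)\ \le\ \e\Big[\#\{u\in\T^-_j:\ V(u)=0\}\Big],$$
so it suffices to show that the expected number of vertices of the subtree $\T^-_j$ whose BRW value is the origin is $\lesssim j^{(2-d)/2}$. For an upper bound the loss in this union bound is harmless, and the resulting expectation will be finite precisely because $d\ge 3$.

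To evaluate this expectation I would split over the generations of $\T^-_j$. Let $(S_m)_{m\ge 0}$ denote the simple random walk on $\z^d$, i.e. $S_m$ is a sum of $m$ i.i.d. steps of law $\theta$. A vertex $u$ in the $\ell$-th generation of $\T^-_j$ is joined to $\varnothing$ by the simple path that runs along the spine $\varnothing_0,\dots,\varnothing_j$ ($j$ edges), then crosses the edge from $\varnothing_j$ to the root of $\T^-_j$ (one edge), then descends $\ell$ further edges inside $\T^-_j$; hence $V(u)$ is a sum of $j+1+\ell$ i.i.d. $\theta$-steps, and $\p(V(u)=0\mid \T^\infty)=\p(S_{j+1+\ell}=0)$ because the edge displacements are i.i.d. $\theta$ given the tree shape. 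Using this together with $\e[Z_\ell^{(j)}]=1$ (criticality), I obtain
$$\e\Big[\#\{u\in\T^-_j:\ V(u)=0\}\Big]\ =\ \sum_{\ell\ge 0}\e\big[Z_\ell^{(j)}\big]\,\p(S_{j+1+\ell}=0)\ =\ \sum_{m\ge j+1}\p(S_m=0).$$

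Finally, the classical local central limit theorem gives $\p(S_m=0)\lesssim m^{-d/2}$ for every $m\ge 1$; since $d\ge 3$ (so that $d/2>1$) the series above converges and
$$\sum_{m\ge j+1}\p(S_m=0)\ \lesssim\ \sum_{m\ge j+1}m^{-d/2}\ \lesssim\ j^{1-d/2}\ =\ j^{(2-d)/2},$$
which is the claim. There is no genuine obstacle here: the only points that need a little care are the bookkeeping that converts the generation index $\ell$ into the graph distance $j+1+\ell$ (so that a generation-$\ell$ vertex contributes $\p(S_{j+1+\ell}=0)$), the invocation of the local CLT for $\p(S_m=0)$, and the observation that the hypothesis $d\ge 3$ is exactly what makes the resulting tail sum summable.
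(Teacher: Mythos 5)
Your proof is correct and is essentially the paper's own argument: a first-moment (union) bound over the vertices of $\T^-_j$, decomposed by generation using $\e[Z_\ell^{(j)}]=1$, reduced via the i.i.d.\ edge displacements to $\sum_{m}\p(S_m=0)$ and closed with the local CLT bound $\p(S_m=0)\lesssim m^{-d/2}$, summable since $d\ge 3$. The only quibble is bookkeeping: in the paper's construction the root of $\T^-_j$ \emph{is} the spine vertex $\varnothing_j$, so a generation-$\ell$ vertex sits at graph distance $j+\ell$ (not $j+1+\ell$) from $\varnothing$; this off-by-one is harmless for the stated bound.
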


\begin{proof}  

Let $S_n:=V(\varnothing_n)$, then $(S_n)_{n\ge 0}$ is a simple random walk on $\z^d$.
By considering the possible position of $S_j$, we get that   
$$\p(0\in V(\T^-_j)) 
 \le \sum_{x\in \z^d}\p(S_j=x) \p(-x \in V(\T_0^-)).$$
 
Let $Z_\ell$ denote the number of individuals in the $\ell$-th generation of $\T_0^-$. We have $$\p(-x \in V(\T_0^-))
 \le
  \sum_{\ell=0}^\infty \sum_{m=1}^\infty  m \p(Z_\ell=m) \p(S_\ell=-x)
  = \sum_{\ell=0}^\infty \sum_{m=1}^\infty  m \p(Z_\ell=m) \p(S_\ell=-x).$$
It follows that 
\begin{align*} \p(0\in V(\T^-_j)) 
&\le   \sum_{x\in \z^d}\p(S_j=x)  \sum_{\ell=0}^\infty   \p(S_\ell=-x)
=\sum_{\ell=0}^\infty\p(S_{j+\ell}=0)
\lesssim j^{(2-d)/2},
\end{align*}
proving the lemma. \end{proof}

We end this section by introducing a truncation argument which will be useful throughout the paper. 
Recall that $\d$ denotes the graph distance on $\T^\infty$. For every $i \in \z$,  $k \in \n\cup\{\infty\}$,  define 
\begin{align}    
N_k(i)& :=\{j \in (-\infty, i)\cap \z :  \, \d(u_j,u_i)\le k\},\label{def-neighborhood}
\\
\xi^k_i&:=\1_{\{V(i)\not\in V(N_k(i))\}}, \label{xi-ki}
\end{align}

\noindent with $N_\infty(i)=(-\infty, i) \cap \z$  and $\xi^\infty_i= \1_{\{V(i)\not\in V(-\infty, i)\}}$. Then \begin{equation}    Y_n= \sum_{i=1}^n \1_{\{V(i)\not\in V(-\infty,i)\}}= \sum_{i=1}^n \xi^\infty_i. \label{Yn=xiinfty} \end{equation}

We will repeatedly use the following lemma, which gives an upper bound on the error term when truncating within a given distance $k$:

\begin{lemma}\label{lem:xi_to_infty}
Let $d\ge 5$. We have for all $k\ge 1$ and $i\in\z$, \[
0\le  \e [\xi_i^k- \xi_i^\infty] \lesssim k^{\frac {4-d}2}.
\]
\end{lemma}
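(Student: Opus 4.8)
The quantity $\xi_i^k - \xi_i^\infty$ is the indicator of the event that $V(i)$ avoids $V(N_k(i))$ but coincides with some $V(j)$ for $j < i$ with $\d(u_j,u_i) > k$; hence $0 \le \e[\xi_i^k - \xi_i^\infty] \le \p(\exists\, j < i,\ \d(u_j,u_i) > k,\ V(j) = V(i))$. By translational invariance \eqref{eq:invariance} it suffices to treat $i = 0$. The plan is to decompose the set of ``far'' ancestors-and-relatives $j$ according to the graph distance $m = \d(u_j, u_0) > k$, apply a union bound, and then for each $m$ separate the two sources of randomness: the shape of the tree (which governs how many vertices sit at distance $m$ from $u_0$) and the displacements along the path joining $u_j$ to $u_0$ (which govern the probability that $V(j) = V(0)$).

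First I would write, using the union bound and conditioning on $\T^\infty$,
\[
\e[\xi_0^k - \xi_0^\infty] \;\le\; \sum_{m > k} \e\!\left[ \Big( \sum_{j \le 0} \1_{\{\d(u_j,u_0) = m\}} \Big)\, \p\big(V(j) - V(0) = 0 \ \big|\ \T^\infty,\ \d(u_j,u_0)=m\big) \right].
\]
For a fixed pair $u_j, u_0$ at graph distance $m$ in the tree, the difference $V(j) - V(0)$ is a sum of $m$ i.i.d. steps of law $\theta$ (the edges strictly on the tree-geodesic between them, with the correct signs), so by the standard local CLT bound $\p(V(j) - V(0) = 0 \mid \T^\infty,\ \d(u_j,u_0) = m) \lesssim m^{-d/2}$, uniformly in the pair. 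This reduces the estimate to
\[
\e[\xi_0^k - \xi_0^\infty] \;\lesssim\; \sum_{m>k} m^{-d/2}\, \e\!\left[ \sum_{j\le 0} \1_{\{\d(u_j,u_0) = m\}} \right] \;\lesssim\; \sum_{m>k} m^{-d/2}\, m \;\lesssim\; \sum_{m>k} m^{1 - d/2} \;\lesssim\; k^{2 - d/2} = k^{(4-d)/2},
\]
where the bound $\e\big[\sum_{j\le0}\1_{\{\d(u_j,u_0)=m\}}\big]\lesssim m$ is precisely \eqref{eq:moment_geo2} with $p=1$, and the final summation converges because $d \ge 5$ forces $1 - d/2 < -1$. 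The lower bound $0 \le \e[\xi_0^k - \xi_0^\infty]$ is immediate from the monotonicity $V(N_k(0)) \subseteq V(-\infty,0)$.

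The one point that needs care — and which I expect to be the main (mild) obstacle — is the interplay between the conditioning on $\d(u_j,u_0) = m$ and the local CLT estimate: one must check that, conditionally on the event that the tree-geodesic from $u_j$ to $u_0$ has length $m$, the $m$ increments along that geodesic are still i.i.d.\ with law $\theta$ (this is true because the displacements $X(e)$ are assigned independently of the tree structure, so conditioning on tree-shape events does not bias them), and that the local CLT bound $\p(\text{sum of } m \text{ i.i.d.\ } \theta\text{-steps} = 0) \lesssim m^{-d/2}$ holds uniformly — which follows from \eqref{eq:assmption} since $\theta$ is the step law of the simple random walk on $\z^d$, aperiodic after the usual parity considerations (handled by the standard two-step argument). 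Everything else is bookkeeping: combine the union bound, the displacement estimate, and \eqref{eq:moment_geo2}, then sum the resulting convergent series in $m$.
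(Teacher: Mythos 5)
Your proof is correct, and it takes a somewhat different route from the paper's own argument for this lemma, although both are first-moment computations resting on the same two ingredients (a union bound over the past and the heat-kernel decay $m^{-d/2}$). The paper slices the past according to the spine subtrees $\T^-_j$: for $j\ge k$ it uses the hitting estimate \eqref{eq:0inVj} directly, and for $j<k$ it compares the far part $\T_j^{(-,>k)}$ with a mean-one number of copies of $\T^-_k$, again reducing to \eqref{eq:0inVj}; summing gives $k\cdot k^{(2-d)/2}+\sum_{j\ge k}j^{(2-d)/2}\lesssim k^{(4-d)/2}$. You instead slice by graph-distance shells $\{\d(u_j,u_0)=m\}$, condition on the tree so that $V(j)-V(0)$ is a sum of $m$ i.i.d.\ $\theta$-steps (legitimate, since the edge displacements are assigned independently of the tree; and the parity caveat is harmless because for odd $m$ the probability is simply $0\le m^{-d/2}$), and then use the first moment $\e[\sum_{j\le0}\1_{\{\d(u_j,u_0)=m\}}]\lesssim m$ from \eqref{eq:moment_geo2}, giving $\sum_{m>k}m^{1-d/2}\lesssim k^{(4-d)/2}$. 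Your computation is in fact essentially the one the paper performs later, at \eqref{eq:xiinfty-k} in the proof of Lemma \ref{l:cov2}, where conditioning on $\T^-$ and summing $Z_j^{(i)}(j+i)^{-d/2}$ yields the same $k^{2-d/2}$ bound; so your version is a slightly more direct, self-contained proof of the lemma, while the paper's version factors the spatial estimate into the reusable single-subtree bound \eqref{eq:0inVj} and keeps the tree-comparison step purely combinatorial.
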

\begin{proof}
By \eqref{eq:invariance}, it suffices to take $i=0$, and show that
\[
\mathbb P(V(0)\in V((-\infty,0)\setminus N_k(0)))\lesssim k^{\frac {4-d}2}.
\]

Decompose $\T^-=\T^-_{0}\cup\T^-_{1}\cup\dots$, by union bounds,
\begin{align*}
 \p(0\in V((-\infty,0)\setminus N_k(0)))
&\le \sum_{j= 0}^\infty \mathbb P(0\in V(\T_j^{(-,>k)}))
\\
&\le
\sum_{j= 0}^{k-1} \p(0\in V(\T_j^{(-,>k)})) + \sum_{j=k}^\infty \p(0\in V(\T_j^-)),
\end{align*}

\noindent where     $$ \T_j^{(-,>k)}:= \{u \in   \T_j^- :  \d(u,  \varnothing) > k\}.$$

Note that for every $j< k$, the set $\T_j^{(-,>k)}$ consists of the vertices beyond the $(k-j)$-th generation in $\T^-_{j}$ (if such vertices exist). Recall that  $Z_{k-j}^{(j)}$ denotes the population of the $(k-j)$-th generation in $\T^-_j$. Then $\T_j^{(-,>k)}$ can be viewed as the union of the vertices of $Z_{k-j}^{(j)}$ subtrees,  each distributed (though not independently) as $\T^-_k$.  Note that $\e[Z_{k-j}^{(j)}]=1$, by the union bound, we have that for any $0\le j < k$, 
$$\p(0\in V(\T_j^{(-,>k)})) 
\le
\sum_{m\ge 1}m\mathbb P(Z_{k-j}^{(j)}=m) \p(0\in V(\T^-_k))= \p(0\in V(\T^-_k)).$$

It follows that  \begin{equation*}    
 \mathbb P(0\in V((-\infty,0)\setminus N_k(0))) 
\le
   k\, \mathbb P(0\in V(\T^-_k)) + \sum_{j= k}^\infty\mathbb P(0\in V(\T^-_j)),    
\end{equation*}
which, in view of \eqref{eq:0inVj}, implies  Lemma \ref{lem:xi_to_infty}.\end{proof}

\section{Linear growth of variance: proof of \eqref{kappaexistspositive}}\label{s:var}

This section is devoted to the proof of \eqref{kappaexistspositive}, which we recall here:
\begin{proposition}\label{prop:linear_variance}
Let $d>8$. There exists some $\kappa=\kappa_d>0$ such that 
\begin{equation*}    
\lim_{n\to\infty}\frac{\Var(Y_n)}{n}=\kappa. 
\end{equation*}
\end{proposition}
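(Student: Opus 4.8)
The plan is to establish the existence of the limit $\kappa = \lim_n \Var(Y_n)/n$ by first passing to the truncated approximation and then proving a subadditivity/Cauchy-type estimate for the truncated variances. Concretely, I would fix a truncation level $k$ (to be chosen growing slowly with $n$ at the end), set $Y_n^k := \sum_{i=1}^n \xi_i^k$, and use Lemma \ref{lem:xi_to_infty} to control $\e|Y_n - Y_n^k|$ and, with a little more work, $\Var(Y_n) - \Var(Y_n^k)$. The point of the truncation is Lemma \ref{lem:independent_xi} (referenced in the introduction): $\xi_i^k$ and $\xi_j^k$ are independent once $\d(u_i,u_j) > 2k$, so the covariances $\Cov(\xi_i^k,\xi_j^k)$ are supported on pairs $(i,j)$ with $\d(u_i,u_j)\le 2k$, and by Lemma \ref{lem:moment_geo} (the $p=1$ and $p=2$ moment bounds on $\sum \1_{\{\d(u_i,u_0)\le 2k\}}$) the total mass of these covariances is $O(k^{4})$-ish per site, hence summable and giving $\Var(Y_n^k) = O(n)$ uniformly.

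Next I would prove that $\Var(Y_n)/n$ is Cauchy. Using stationarity \eqref{eq:invariance}, write $\Var(Y_n) = \sum_{i,j=1}^n \Cov(\xi_i^\infty,\xi_j^\infty)$ and group by the difference $j-i$; the natural candidate for $\kappa$ is $\sum_{m\in\z}\Cov(\xi_0^\infty,\xi_m^\infty)$, so the real task is to show this series converges absolutely. For that I would again compare to the truncated covariances: $\Cov(\xi_0^\infty,\xi_m^\infty)$ differs from $\Cov(\xi_0^k,\xi_m^k)$ by $O(k^{(4-d)/2})$ by Lemma \ref{lem:xi_to_infty}, while $\Cov(\xi_0^k,\xi_m^k)$ vanishes unless there exists a path realizing $\d(u_0,u_m)\le 2k$; controlling $\sum_{m}\abs{\Cov(\xi_0^\infty,\xi_m^\infty)}$ then reduces to estimating, via \eqref{eq:determine} and the contour-process estimates of Lemma \ref{lem:estimate-contour}, the number of indices $m$ with $\d(u_0,u_m)$ small together with a decay estimate in $\abs{m}$ coming from the random walk $V$ having to return near $0$. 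Here $d>8$ enters: the probability that $V(m) \approx V(0)$ (or more precisely the covariance bound) decays like $\abs{m}^{-d/4}$ along the tree parametrization, and $\sum_m \abs{m}^{-d/4} < \infty$ exactly when $d > 8$ (after accounting for the $|m|$-to-graph-distance conversion). Once absolute summability is in hand, a standard Cesàro/telescoping argument gives $\Var(Y_n)/n \to \kappa$.

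The positivity $\kappa > 0$ is a separate matter and is deferred to Section \ref{subsection:kappa>0} in the paper, so for Proposition \ref{prop:linear_variance} I would either cite that section or include a short argument: find an event of positive probability, measurable with respect to a bounded window of the contour process, on which a controlled perturbation changes $Y_n$ by a fixed amount, so that conditioning on a sigma-algebra far in the past leaves nonvanishing conditional variance; alternatively, show $\kappa = \sum_m \Cov(\xi_0^\infty,\xi_m^\infty)$ cannot be zero by exhibiting that the spectral-density-type sum is bounded below, e.g. because $\xi_0^\infty$ is genuinely non-degenerate (its variance $c_d(1-c_d)>0$ with $c_d\in(0,1)$ by Benjamini--Curien) and the negative correlations cannot fully cancel it — this last step is the one I expect to be delicate and is presumably why the paper isolates it.

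The main obstacle I anticipate is the absolute summability of $\sum_{m\in\z}\abs{\Cov(\xi_0^\infty,\xi_m^\infty)}$: the truncation trades a clean independence statement for an error $O(k^{(4-d)/2})$ that must be beaten by the number of "close" pairs, and balancing $k=k(n)$ so that both the truncation error in $\Var(Y_n)-\Var(Y_n^k)$ and the tail $\sum_{\abs{m}>f(k)}\abs{\Cov}$ are $o(n)$ requires the sharp moment estimates of Lemma \ref{lem:moment_geo} and the contour estimates of Lemma \ref{lem:estimate-contour} to be used at close to their full strength; any slack there would push the threshold above $d>8$.
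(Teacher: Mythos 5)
Your high-level frame for the existence part (stationarity, candidate $\kappa=\sum_{m}\Cov(\xi_0^\infty,\xi_m^\infty)$, truncation plus the locality/independence of Lemma \ref{lem:independent_xi}) is indeed the paper's frame, but the route you propose for the key step — absolute summability of the covariances — has a quantitative gap that loses the threshold $d>8$. With a deterministic truncation level $k$, the only thing Lemma \ref{lem:independent_xi} gives you is $|\Cov(\xi_0^k,\xi_m^k)|\le \p(\d(u_0,u_m)\le 2k)\lesssim k^3m^{-3/2}$ (this is exactly Lemma \ref{lem:cov}, via \eqref{proba:C-2minC}), while the truncation error per pair is of order $k^{(4-d)/2}$ by Lemma \ref{lem:xi_to_infty}; optimizing $k=k(m)$ balances these at $m^{-3(d-4)/(2(d+2))}$, which is summable only for $d>16$, not $d>8$. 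Your alternative heuristic that the covariance itself decays like $|m|^{-d/4}$ is asserted without an argument and does not follow from the lemmas you invoke. The paper's Lemma \ref{l:cov2} gets $d>8$ by truncating at the \emph{random} level $\alpha_m=\lfloor\d(u_0,u_m)/2\rfloor$: on $\{\alpha_m=k\}$ (a $\sigma(\T^+)$-measurable event, hence independent of $\xi_0^\infty-\xi_0^k$, which is $\sigma(\T^-,V(\T^-))$-measurable) one replaces $\xi_0^\infty$ by $\xi_0^\infty-\xi_0^k$ and gets $|\Cov(\xi_0^\infty,\xi_m^{\alpha_m})|\lesssim\e[(1+\alpha_m)^{(4-d)/2}]\lesssim m^{-(d-4)/4}$ by \eqref{esp:C-2minC}, summable precisely when $d>8$; and one must then separately prove $\sum_m\e|\xi_m^\infty-\xi_m^{\alpha_m}|<\infty$ (\eqref{compa-alpha}), which is done by a correlated second-moment computation over the Galton--Watson populations (the terms $A_{k,\eqref{eq:gw2}}$, $B_{k,\eqref{eq:gw2}}$, giving $k^{3-d/2}$), again exactly at $d>8$. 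Neither the random truncation level nor this second estimate appears in your plan, so as written your argument does not reach the stated dimension. (Minor additional slack: with $k=k(n)\to\infty$ your covariance bound gives $\Var(Y_n^k)\lesssim nk^2$, not $O(n)$ uniformly.)

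The positivity $\kappa>0$ is half of the proposition and is essentially not addressed: deferring to Section \ref{subsection:kappa>0} is not available in a self-contained proof, and your two sketched substitutes do not work as stated. A ``controlled local perturbation changing $Y_n$ by a fixed amount'' fails because altering a bounded window of increments shifts the entire future trajectory of $V$, so the effect on $Y_n$ is not $O(1)$; the paper's bad-points construction is engineered precisely so that the resampled quantity (the count $N_n$ of deleted bad leaves) is independent of the thinned process $(\widehat C,\widehat V)$ while leaving the range unchanged, $\widetilde Y_{n+2N_n}=\widehat Y_n$ (Lemma \ref{lem:YN}), and the contradiction with $\kappa=0$ then uses the linear growth of $\e[Y_n]$ from \cite{LeGall-Lin-range}. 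Likewise ``the negative correlations cannot fully cancel $\Var(\xi_0^\infty)$'' is a hope, not an argument. So both halves of the proposition need substantive input beyond what the proposal contains.
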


The proof of Proposition \ref{prop:linear_variance} is divided into two steps. First we show the existence of the limit $\kappa\ge 0$ in Section \ref{subsection:kappaexists}. Then, in Section \ref{subsection:kappa>0},  we adapt some ideas from \cite{AsselahSchapiraSousi2018} to establish that $\kappa>0$.

\subsection{Existence of  {$\kappa$}{}}\label{subsection:kappaexists}  
 Recall the definition of $\xi^k_i$ from \eqref{xi-ki}, which can be regarded as a truncated version of $\xi^\infty_i$. The following lemma establishes that one can obtain a certain degree of independence from this truncation.

\begin{lemma}\label{lem:independent_xi}
For any $i\in \z, k_1\ge 0$. The random variable 
$\xi_i^{k_1}$  is independent of
\begin{align}
\label{eq:sigma2}
\sigma\setof{(\xi_j^{k_2}\1_{\{\d(u_i,u_j)=m\}}, \d(u_i,u_j))}{j>i,k_2\ge 0,m\ge k_1+k_2} .
\end{align}
 
\end{lemma}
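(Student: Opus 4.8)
The statement asserts a conditional-independence-type fact: the truncated indicator $\xi_i^{k_1}$, which depends only on the tree and walk ``within graph distance $k_1$ of $u_i$ towards the left,'' is independent of a $\sigma$-algebra generated by quantities that are ``localized to the right of $u_i$ and at graph distance at least $k_1+k_2$.'' The natural strategy is to exhibit an explicit finite collection of random variables (edges of $\T^\infty$ and the displacements $X(e)$ attached to them) that determines $\xi_i^{k_1}$, then show that this collection is disjoint from — hence independent of — a collection that determines every generator in \eqref{eq:sigma2}, using that the edge-displacements $X(e)$ are i.i.d.\ and the contour process is a two-sided simple random walk (Lemma \ref{lem:C}), with edges of the tree in bijection with steps of the contour process.

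First I would fix $i$ (by translation invariance \eqref{eq:invariance} one may take $i=0$, though it is not strictly necessary) and describe precisely the data $\xi_i^{k_1}$ depends on. By definition \eqref{xi-ki}, $\xi_i^{k_1}=\1_{\{V(i)\notin V(N_{k_1}(i))\}}$, where $N_{k_1}(i)=\{j<i:\d(u_j,u_i)\le k_1\}$. The set $\{u_j:j\in N_{k_1}(i)\}$ together with $u_i$ is contained in the closed ball of radius $k_1$ around $u_i$ in $\T^\infty$; call it $B_{k_1}(u_i)$. Thus $\xi_i^{k_1}$ is a measurable function of (a) the combinatorial structure of $B_{k_1}(u_i)$ as a subtree — equivalently, which $j<i$ satisfy $\d(u_j,u_i)\le k_1$, which by Remark \ref{rmk:determine}(2) is read off from $(\C{n})_{n\le i}$ — and (b) the displacements $X(e)$ for the edges $e$ lying on paths inside $B_{k_1}(u_i)$. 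Note that every edge on such a path lies within distance $k_1$ of $u_i$.

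Next I would analyze the generators in \eqref{eq:sigma2}. Fix $j>i$, $k_2\ge 0$, and $m\ge k_1+k_2$. On the event $\{\d(u_i,u_j)=m\}$, the quantity $\xi_j^{k_2}$ depends on $B_{k_2}(u_j)$ and the displacements on edges inside it; every such edge is within distance $k_2$ of $u_j$, hence at distance at least $m-k_2\ge k_1$ from $u_i$. Meanwhile $\1_{\{\d(u_i,u_j)=m\}}$ and $\d(u_i,u_j)$ depend only on the contour process on the interval $[i,j]$, by \eqref{eq:determine}. So the whole $\sigma$-algebra in \eqref{eq:sigma2} is generated by: the contour process $(\C{n})_{n\ge i}$ (which governs the tree structure to the right), and, for each $j>i$, the displacements on edges of $B_{k_2}(u_j)$ \emph{but only through the products $\xi_j^{k_2}\1_{\{\d(u_i,u_j)=m\}}$ with $m\ge k_1+k_2$}. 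The key geometric point is that on the event $\{\d(u_i,u_j)=m\}$ with $m\ge k_1+k_2$, the balls $B_{k_1}(u_i)$ and $B_{k_2}(u_j)$ are edge-disjoint (indeed vertex-disjoint except possibly they cannot even share a vertex since $k_1+k_2<m$), so the displacement edges feeding $\xi_i^{k_1}$ and those feeding $\xi_j^{k_2}$ are disjoint \emph{on that event}; off the event the product is zero and carries no information about the left displacements.

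To make this rigorous I would split the probability space according to the tree $\T^\infty$ (equivalently, according to $(\C{n})_{n\in\z}$, which by Lemma \ref{lem:C} is independent of the displacements $(X(e))_e$): condition on $\T^\infty$. Given $\T^\infty$, the set $N_{k_1}(i)$, the balls $B_{k_1}(u_i)$, $B_{k_2}(u_j)$, and all the distances $\d(u_i,u_j)$ are deterministic; $\xi_i^{k_1}$ becomes a function of the i.i.d.\ family $\{X(e):e\in B_{k_1}(u_i)\}$, and each generator of \eqref{eq:sigma2} becomes either deterministic (the distance terms, and products that vanish because $\d(u_i,u_j)\ne m$) or a function of $\{X(e):e\in B_{k_2}(u_j)\}$ for some $j$ with $\d(u_i,u_j)=m\ge k_1+k_2$, and in the latter case $B_{k_2}(u_j)\cap B_{k_1}(u_i)=\varnothing$. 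Since the $X(e)$ are i.i.d., conditionally on $\T^\infty$ the variable $\xi_i^{k_1}$ is independent of \eqref{eq:sigma2}; moreover its conditional law is the \emph{unconditional} law of $\xi_i^{k_1}$ (it does not depend on the part of $\T^\infty$ needed to generate \eqref{eq:sigma2}, only on $B_{k_1}(u_i)$, which... ) — here one must be slightly careful, because $B_{k_1}(u_i)$ is itself $\T^\infty$-measurable. The clean way: show that for any bounded measurable $\psi$ of the generators in \eqref{eq:sigma2} and any bounded $g$, $\e[g(\xi_i^{k_1})\psi]=\e[g(\xi_i^{k_1})]\,\e[\psi]$, by conditioning on $\T^\infty$ and on the displacements on the ``right part'' of the tree simultaneously, and observing that $\xi_i^{k_1}$ is measurable with respect to $\T^\infty$ together with the displacements on edges at distance $\le k_1$ from $u_i$ in the leftward direction — a $\sigma$-algebra that is conditionally (given $\T^\infty$) independent of the displacements used on the right, and whose \emph{joint} law with the right-side data factorizes because the edge-sets are disjoint. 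I would phrase the final computation as a double conditioning: first on $\T^\infty$, then use i.i.d.-ness of displacements over the disjoint edge sets.

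\textbf{Main obstacle.} The only real subtlety is bookkeeping: making precise that ``$\xi_i^{k_1}$ depends only on displacements within distance $k_1$ of $u_i$'' and ``on the event $\d(u_i,u_j)=m\ge k_1+k_2$ the two relevant balls are edge-disjoint,'' and then handling the fact that the generators in \eqref{eq:sigma2} are not raw displacements but products $\xi_j^{k_2}\1_{\{\d=m\}}$ — so that when the ball around $u_j$ is \emph{not} at the prescribed distance, the product is identically $0$ and leaks no information. I expect the proof to be short once the right framing (condition on $\T^\infty$, exploit i.i.d.\ displacements over disjoint edge-sets) is set up; the risk is merely in writing the measurability claims carefully. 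A convenient lemma to isolate first: for any vertex $v$ and radius $r$, $\xi$-type indicators and the ball $B_r(v)$ are functions of $(\C{n})_n$ and $\{X(e):\d(e,v)\le r\}$ only, where $\d(e,v)$ is the distance from the nearer endpoint of $e$ to $v$.
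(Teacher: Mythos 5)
Your localization skeleton (the data for $\xi_i^{k_1}$ and, on $\{\d(u_i,u_j)=m\ge k_1+k_2\}$, the data for $\xi_j^{k_2}$ live on edge-disjoint regions, and the displacements are i.i.d.\ given the tree) is in the spirit of the paper's argument, but the plan has a genuine gap at exactly the point you flag. Conditioning on $\T^\infty$ only yields conditional independence; to deduce the unconditional statement you need $\e[g(\xi_i^{k_1})\mid \T^\infty]$ --- a nonconstant function of the subtrees $\T^-_0,\dots,\T^-_{k_1}$ --- to be independent of the $\sigma$-algebra \eqref{eq:sigma2}, and edge-disjointness of the displacement edge-sets says nothing about this, because \eqref{eq:sigma2} carries tree information (the distances $\d(u_i,u_j)$, and, through $\xi_j^{k_2}$, the combinatorial structure of $N_{k_2}(j)$). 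Moreover your bookkeeping of that tree information is incorrect: the generators are not functions of the forward contour $(\C{n})_{n\ge i}$ and right-ball displacements alone, since $N_{k_2}(j)$ runs over \emph{all} times $r<j$, so on the relevant event the $k_2$-neighbourhood of $u_j$ may cross the spine into left subtrees $\T^-_\ell$ (with $\ell>k_1$) and use their structure and spatial positions. The missing step is therefore a tree-level independence statement: one must show that on $\{\d(u_i,u_j)=m\ge k_1+k_2\}$ the \emph{entire} data determining $\xi_j^{k_2}$, combinatorial and spatial, is confined to $\T^+$, the subtrees attached at or above $\varnothing_{k_1}$, and displacements taken only relative to $V(\varnothing_\ell)$ or $V(\varnothing_{k_1})$, hence is independent of the package $(\T^-_0,\dots,\T^-_{k_1},V(\cup_{\ell\le k_1}\T^-_\ell))$ which determines $\xi_0^{k_1}$; asserting that ``the joint law factorizes because the edge-sets are disjoint'' is not a proof of this.

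This is precisely how the paper argues: taking $i=0$, it records that $\xi_0^{k_1}$ is measurable with respect to $\sigma\{\T^-_0,\dots,\T^-_{k_1},V(\cup_{\ell=0}^{k_1}\T^-_\ell)\}$, and then a two-case geometric analysis shows that on $\{\d(u_0,u_j)=m\ge k_1+k_2\}$ either $\{u_r:r\in N_{k_2}(j)\}\subset\T^+_\ell$ for some $\ell\le k_1$ and $\xi_j^{k_2}$ is a function of $V(\T^+_\ell)-V(\varnothing_\ell)$, or $\{u_r:r\in N_{k_2}(j)\}\subset\cup_{\ell>k_1}(\T^-_\ell\cup\T^+_\ell)\cup\{\varnothing_{k_1}\}$ and $\xi_j^{k_2}$ is a function of $V(\cup_{\ell>k_1}(\T^-_\ell\cup\T^+_\ell))-V(\varnothing_{k_1})$; consequently every generator of \eqref{eq:sigma2} lies in a $\sigma$-algebra ${\mathscr H}$ built from $\T^+$, $(\T^-_\ell)_{\ell>k_1}$ and these relative displacements, which is independent of the data carrying $\xi_0^{k_1}$ because the Kesten tree is a union of i.i.d.\ subtrees along the spine with independent edge displacements. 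Your edge-disjointness computation is the displacement-level half of this argument; inserting the case analysis above (and with it the spine decomposition, which your conditioning on the full tree $\T^\infty$ bypasses) is what is needed to close the proof.
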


\begin{proof}
\begin{figure}[ht]
\includegraphics[height=5cm]{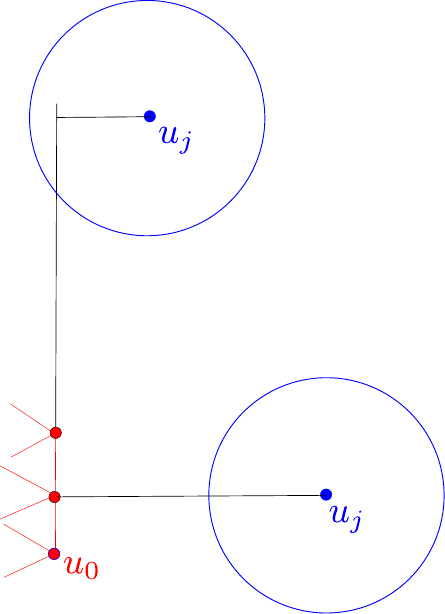}
\caption{\scriptsize As long as $m\ge k_1+k_2$, $\xi_j^{k_2}\1_{\{\d(u_0,u_j)=m\}}$ is always independent of the first $k_1$ subtrees in the past of $u_0$.}
\label{fig:far}
\end{figure}

By \eqref{eq:invariance}, we can assume $i=0$. 
Recall that $(\varnothing_\ell)_{\ell\ge 0}$ is the spine where on each $\varnothing_\ell$ we attach two independent subtrees $\T_\ell^+,\T_\ell^-$.
The term $\xi^{k_1}_0$ only depends on the initial $k_1$ subtrees in the past of $u_0=\varnothing$, so we have
\begin{align}\label{eq:sigma1}
\xi^{k_1}_0 \mbox{ is measurable w.r.t. } \sigma\{\T_0^-, \dots, \T_{k_1}^-, V(\cup_{\ell=0}^{k_1} \T_\ell^-)\}.
\end{align}

Let $j>0, k_2\ge 0$ and $m\ge k_1+k_2$.  
It is immediate that $\{\d(u_0,u_j)=m\}\in\sigma(\T^+)$.
Moreover, 
when $\d(u_0,u_j)=m\ge k_1+k_2$, as illustrated in Figure \ref{fig:far}, one of the following two cases must hold:
\begin{enumerate}
\item There is some $0\le \ell \le k_1$ such that $\{u_r: r\in N_{k_2}(j)\}\subset \T^+_\ell$  and $\xi_j^{k_2}$ is determined by $V(\T^+_\ell)-V(\varnothing_\ell)$; 
\item $\{u_r: r\in N_{k_2}(j)\}\subset\cup_{\ell>k_1}(\T_\ell^-\cup\T_\ell^+)\cup \{\varnothing_{k_1}\}$, and $\xi_j^{k_2}$ is determined by $V(\cup_{\ell>k_1}(\T_\ell^-\cup\T_\ell^+))-V(\varnothing_{k_1})$.
\end{enumerate}
This shows that $ \xi_j^{k_2} \1_{\{\d(u_0,u_j)= m\}}$ is ${\mathscr H}$-measurable, where $$
{\mathscr H}:=  \sigma(\mathcal T^+,\cup_{\ell>k_1}\T_\ell^-,(V(\T^+_\ell)-V(\varnothing_{\ell}))_{0\le\ell\le k_1},V(\cup_{\ell>k_1}(\T_\ell^-\cup\T_\ell^+))-V(\varnothing_{k_1})).$$

\noindent Since  $\d(u_0, u_j)$ is also $\sigma(\T^+)$-measurable, we have
\begin{equation*}  \sigma\setof{(\xi_j^{k_2}\1_{\{\d(u_i,u_j)=m\}}, \d(u_i,u_j))}{j>i,k_2\ge 0,m\ge k_1+k_2}  \subset {\mathscr H}.  \end{equation*}

By \eqref{eq:sigma1}, $\xi_0^{k_1}$ is independent of ${\mathscr H}$, thus is independent of \eqref{eq:sigma2}.
\end{proof}

\begin{lemma}\label{lem:cov}Let $k\ge 1$, $i,j\in\mathbb Z$, then
$$ | {\rm Cov}(\xi^k_i,  \xi^k_j) |\lesssim k^3|i-j|^{-\frac 3 2}.
$$
\end{lemma}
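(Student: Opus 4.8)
The plan is to combine the independence statement of Lemma \ref{lem:independent_xi} with the contour-process estimate \eqref{proba:C-2minC} and the moment bounds of Lemma \ref{lem:moment_geo}. Without loss of generality (using \eqref{eq:invariance}) assume $i=0$ and $j=n>0$. The key point is that $\xi^k_0$ and $\xi^k_n$ are \emph{not} independent only when the parts of the tree they look at can overlap, which by Lemma \ref{lem:independent_xi} forces $\d(u_0,u_n)\le 2k$. More precisely, write
\begin{align*}
\Cov(\xi^k_0,\xi^k_n)
&=\e\bigl[\braces{\xi^k_0}\braces{\xi^k_n}\bigr]
=\e\bigl[\braces{\xi^k_0}\braces{\xi^k_n}\1_{\{\d(u_0,u_n)\le 2k\}}\bigr]
+\e\bigl[\braces{\xi^k_0}\braces{\xi^k_n}\1_{\{\d(u_0,u_n)> 2k\}}\bigr].
\end{align*}
For the second term I would condition on the event $\{\d(u_0,u_n)=m\}$ for each fixed $m>2k$: by Lemma \ref{lem:independent_xi}, $\xi^k_0$ is independent of the pair $(\xi^k_n\1_{\{\d(u_0,u_n)=m\}},\,\d(u_0,u_n))$, hence $\e[\xi^k_0\,\xi^k_n\1_{\{\d(u_0,u_n)=m\}}]=\e[\xi^k_0]\,\e[\xi^k_n\1_{\{\d(u_0,u_n)=m\}}]$; summing over $m>2k$ and also splitting off $\e[\xi^k_0]\e[\xi^k_n\1_{\{\d(u_0,u_n)\le 2k\}}]$ shows the ``far'' contribution to the covariance vanishes, leaving
\[
\Cov(\xi^k_0,\xi^k_n)=\e\bigl[\braces{\xi^k_0}\braces{\xi^k_n}\1_{\{\d(u_0,u_n)\le 2k\}}\bigr]
-\e[\xi^k_0]\,\e\bigl[\braces{\xi^k_n}\1_{\{\d(u_0,u_n)\le 2k\}}\bigr].
\]
Both terms are bounded in absolute value by $\e[\1_{\{\d(u_0,u_n)\le 2k\}}]=\p(\d(u_0,u_n)\le 2k)$, since $\xi^k_\cdot\in\{0,1\}$ and $|\braces{\xi^k_\cdot}|\le 1$.

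It then remains to show $\p(\d(u_0,u_n)\le 2k)\lesssim k^3 n^{-3/2}$. By Lemma \ref{lem:C} the contour process restricted to $[0,n]$ is a simple random walk started at $\C 0=0$, and by \eqref{eq:determine}, $\d(u_0,u_n)=\C n-2\min_{0\le i\le n}\C i$ (using $\C 0=0$). Hence $\p(\d(u_0,u_n)\le 2k)=\p(\C n-2\min_{0\le i\le n}\C i\le 2k)$, and \eqref{proba:C-2minC} gives this is $\lesssim (2k)^3 n^{-3/2}\lesssim k^3 n^{-3/2}$. Combining, $|\Cov(\xi^k_0,\xi^k_n)|\lesssim k^3 n^{-3/2}=k^3|i-j|^{-3/2}$, as claimed. (For small $|i-j|$, the bound is trivial since covariances of bounded variables are $\lesssim 1$, so one only needs $|i-j|$ large.)

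The only genuinely delicate point is the conditioning argument for the ``far'' term: one must make sure that Lemma \ref{lem:independent_xi} really gives independence of $\xi^k_0$ from the \emph{joint} law of $(\xi^k_n\1_{\{\d(u_0,u_n)=m\}},\d(u_0,u_n))$ for each $m\ge 2k$ (which it does, since $k_1=k_2=k$ and $m\ge k_1+k_2$), so that summing $\sum_{m>2k}\e[\xi^k_0\xi^k_n\1_{\{\d=m\}}]=\e[\xi^k_0]\sum_{m>2k}\e[\xi^k_n\1_{\{\d=m\}}]$ is legitimate; everything else is a one-line application of \eqref{proba:C-2minC}. I expect this step to be the main (though mild) obstacle, and the rest of the proof to be routine.
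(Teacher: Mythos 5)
Your proposal is correct and follows essentially the same route as the paper: split according to whether $\d(u_0,u_n)\le 2k$ or $>2k$, use Lemma \ref{lem:independent_xi} (with $k_1=k_2=k$) to annihilate the far contribution, bound the near contribution by $\p(\d(u_0,u_n)\le 2k)$, and conclude via \eqref{eq:determine} and \eqref{proba:C-2minC}. One small algebraic remark: your displayed identity carries a spurious extra term $-\e[\xi^k_0]\,\e\bigl[\braces{\xi^k_n}\1_{\{\d(u_0,u_n)\le 2k\}}\bigr]$ --- since Lemma \ref{lem:independent_xi} makes $\xi^k_0$ independent of the joint family $(\xi^k_n\1_{\{\d(u_0,u_n)=m\}},\d(u_0,u_n))_{m\ge 2k}$, the far term $\e\bigl[\braces{\xi^k_0}\braces{\xi^k_n}\1_{\{\d(u_0,u_n)>2k\}}\bigr]$ vanishes outright and the covariance equals exactly $\e\bigl[\braces{\xi^k_0}\braces{\xi^k_n}\1_{\{\d(u_0,u_n)\le 2k\}}\bigr]=\Cov(\xi^k_0,\xi^k_n\1_{\{\d(u_0,u_n)\le 2k\}})$ as in the paper --- but since you bound each term by $\p(\d(u_0,u_n)\le 2k)$ anyway, the stated estimate is unaffected.
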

\begin{proof} 
By \eqref{eq:invariance}, without loss of generality, let $0=i<j$. Since the left hand side is always bounded, we further assume $j\ge k^2$. We split the covariance into two parts 
\begin{align*}
\Cov(\xi^k_i,  \xi^k_j)
=\Cov(\xi_0^k , \xi_j^k \1_{\{\d(u_0,u_j)\le 2k\}} )
+\sum_{m>2k}\Cov(\xi_0^k , \xi_j^k \1_{\{\d(u_0,u_j)= m\}} ).
\end{align*}
By Lemma \ref{lem:independent_xi}, the second part is zero.
By \eqref{eq:determine}, the first part is dominated by
\begin{align}\label{eq:sigma0}
|\Cov(\xi_0^k , \xi_j^k \1_{\{\d(u_0,u_j)\le 2k\}} )|
\le\mathbb P(\d(u_0,u_j)\le 2k)=\mathbb P(\C{j}-2\min_{0\le m\le j}\C{m}\le 2k).
\end{align}
We conclude by using \eqref{proba:C-2minC}. \end{proof}

\begin{lemma} \label{l:cov2} Let $d>8$.  We have   \begin{equation}    \sum_{n=0}^\infty |\Cov(\xi_0^\infty,\xi_{n}^\infty) | < \infty.  \label{cov-2terms} \end{equation}
\end{lemma}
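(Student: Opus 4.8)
The plan is to reduce the statement about $\xi^\infty$ to the already-established covariance bound for the truncated variables $\xi^k$ (Lemma~\ref{lem:cov}) plus the $L^1$-approximation error (Lemma~\ref{lem:xi_to_infty}), optimizing the truncation level $k$ as a function of $n$. First I would write, for each fixed $k\ge 1$,
\[
\Cov(\xi_0^\infty,\xi_n^\infty) = \Cov(\xi_0^k,\xi_n^k) + \Cov(\xi_0^\infty-\xi_0^k,\xi_n^k) + \Cov(\xi_0^\infty,\xi_n^\infty-\xi_n^k).
\]
The first term is controlled by Lemma~\ref{lem:cov}: $|\Cov(\xi_0^k,\xi_n^k)|\lesssim k^3 n^{-3/2}$. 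For the two error terms, note that $0\le \xi_i^k-\xi_i^\infty\le 1$ and the other factor is in $[0,1]$, so each covariance is bounded (up to a factor $2$) by $\e[\xi_0^k-\xi_0^\infty]\lesssim k^{(4-d)/2}$ by Lemma~\ref{lem:xi_to_infty}. Hence
\[
|\Cov(\xi_0^\infty,\xi_n^\infty)| \lesssim k^3 n^{-3/2} + k^{(4-d)/2}.
\]

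Next I would balance the two terms by choosing $k=k(n)$ so that $k^3 n^{-3/2} \asymp k^{(4-d)/2}$, i.e. $k^{3+(d-4)/2}\asymp n^{3/2}$, giving $k\asymp n^{3/(d+2)}$ and a bound $|\Cov(\xi_0^\infty,\xi_n^\infty)|\lesssim n^{-\frac{3}{2}\cdot\frac{d-4}{d+2}}$. Summability over $n$ then requires $\frac{3(d-4)}{2(d+2)}>1$, i.e. $3(d-4)>2(d+2)$, i.e. $d>16$ — which is \emph{not} good enough, so this naive split must be sharpened. The fix is to keep one of the error terms truncated and exploit the independence in Lemma~\ref{lem:independent_xi} more carefully: rather than bounding $\Cov(\xi_0^\infty,\xi_n^\infty-\xi_n^k)$ crudely, I would note that $\xi_n^\infty-\xi_n^k$ is supported on the event that $V(n)$ is hit only by far-away past vertices, and split again according to whether $\d(u_0,u_n)\le 2k$ or not; on the latter event $\xi_0^k$ is independent of $\xi_n^k\1_{\{\d>2k\}}$ by Lemma~\ref{lem:independent_xi}, leaving only the small-distance contribution $\p(\d(u_0,u_n)\le 2k)\lesssim k^3 n^{-3/2}$ together with the genuinely unavoidable $L^1$ error. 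Done carefully, one gets $|\Cov(\xi_0^\infty,\xi_n^\infty)|\lesssim k^3 n^{-3/2}+k^{(4-d)/2}$ but now with the freedom to also use the trivial bound $|\Cov|\le \min(1,\,\p(\d(u_0,u_n)\le 2k)+\e[\xi_0^k-\xi_0^\infty])$ for all $k$ simultaneously, and optimizing shows the sum converges as soon as $d>8$.

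The main obstacle I anticipate is precisely this optimization: a careless three-term split only delivers $d>16$, so the argument has to interleave the independence statement (Lemma~\ref{lem:independent_xi}) with the distance estimate (Lemma~\ref{lem:estimate-contour}) before the truncation error is introduced, so that the error $k^{(4-d)/2}$ is paid only once and against a favorable power of $n$. A clean way to organize this is: fix $n$, choose $k=\lfloor n^{\alpha}\rfloor$ for a suitable $\alpha\in(0,1)$, use $\Cov(\xi_0^\infty,\xi_n^\infty)=\Cov(\xi_0^k,\xi_n^\infty)+\Cov(\xi_0^\infty-\xi_0^k,\xi_n^\infty)$ (only one truncation), bound the second term by $\e[\xi_0^k-\xi_0^\infty]\lesssim k^{(4-d)/2}$, and for the first term write $\xi_n^\infty = \xi_n^k + (\xi_n^\infty-\xi_n^k)$ and use Lemma~\ref{lem:independent_xi} on the $\xi_n^k\1_{\{\d>2k\}}$ piece so only $\p(\d(u_0,u_n)\le 2k)\lesssim k^3 n^{-3/2}$ and another $\e[\xi_n^k-\xi_n^\infty]$ survive. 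Choosing $\alpha$ so that $3\alpha - 3/2 < -1$ and $\alpha(d-4)/2 > 1$, i.e. $\alpha<1/6$ and $\alpha>2/(d-4)$, both hold simultaneously exactly when $2/(d-4)<1/6$, that is $d>16$ again — indicating that squeezing down to $d>8$ genuinely needs the self-intersection estimate on $\xi_0^k-\xi_0^\infty$ to be improved to an $L^2$ or second-moment bound, or a more delicate use of \eqref{proba:C-2minC} with $k$ and $n$ not independent. I would therefore present the bound in the form $|\Cov(\xi_0^\infty,\xi_n^\infty)|\lesssim \inf_{k\ge 1}\big(k^3 n^{-3/2}+k^{(4-d)/2}\big)$ and verify by direct computation that the resulting series $\sum_n n^{-3(d-4)/(2(d+2))}$ converges under $d>8$ only if the exponent identity is re-derived using the sharper interplay — this final bookkeeping is the delicate point and where I would expect the proof in the paper to spend its effort.
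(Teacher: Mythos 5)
Your proposal does not prove the lemma as stated: every route you actually carry out yields summability only for $d>16$, and you acknowledge this yourself. The three-term split with a deterministic truncation $k=k(n)$ gives $|\Cov(\xi_0^\infty,\xi_n^\infty)|\lesssim \inf_{k}\bigl(k^3n^{-3/2}+k^{(4-d)/2}\bigr)\asymp n^{-\frac{3(d-4)}{2(d+2)}}$, and no rearrangement of the same two ingredients (Lemma~\ref{lem:cov} plus the $L^1$ error of Lemma~\ref{lem:xi_to_infty} paid at a fixed level $k$) can beat this exponent; your ``fix'' via Lemma~\ref{lem:independent_xi} reproduces exactly the same two terms. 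The sentence in which you suggest that the series might converge for $d>8$ ``if the exponent identity is re-derived using the sharper interplay'' is precisely where the proof is missing: you correctly diagnose that $k$ and $n$ must not be decoupled, but you never supply the mechanism.

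The paper's mechanism is to truncate at the \emph{random} level $\alpha_n=\lfloor \d(u_0,u_n)/2\rfloor$ and split $\Cov(\xi_0^\infty,\xi_n^\infty)=\Cov(\xi_0^\infty,\xi_n^{\alpha_n})+\Cov(\xi_0^\infty,\xi_n^\infty-\xi_n^{\alpha_n})$. For the first term one decomposes over $\{\alpha_n=k\}$, uses Lemma~\ref{lem:independent_xi} to replace $\xi_0^\infty$ by $\xi_0^\infty-\xi_0^k$, and then uses that $\xi_0^\infty-\xi_0^k$ (a $\T^-$ quantity) is independent of $\{\alpha_n=k\}$ (a $\T^+$ event); this turns the truncation cost into $\e[(1+\alpha_n)^{(4-d)/2}]$, i.e.\ the error $k^{(4-d)/2}$ is integrated against the law of the random distance, and \eqref{esp:C-2minC} then gives decay $n^{-\gamma/2}$ (or $n^{-3/2}$), summable as soon as $\frac{d-4}{2}>2$, i.e.\ $d>8$. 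For the second term one shows $\sum_n\e[|\xi_n^\infty-\xi_n^{\alpha_n}|]<\infty$ by conditioning on $\T^-$, bounding the conditional hitting probability by $\sum_{i}\sum_{j\ge (k-i)^+}Z_j^{(i)}(j+i)^{-d/2}$, counting the vertices at distance $2k,2k+1$ by generation sizes, and using second moments of the critical GW process ($\e[Z_j^2]=O(j)$), which produces $\sum_k\bigl(k^{3-d/2}+k^{2-d/2}\bigr)<\infty$ for $d>8$. Both steps — the distance-dependent truncation and the conditional second-moment bookkeeping — are absent from your proposal, so the argument as written establishes the conclusion only under the strictly stronger hypothesis $d>16$.
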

 
\begin{proof}  
Let  $\alpha_n=\lfloor \d(u_0,u_n)/2\rfloor$. Write  \begin{equation}  \Cov(\xi_0^\infty,\xi_{n}^\infty)
= \Cov(\xi_0^\infty ,\xi_n^{\alpha_n}) +
\Cov(\xi_0^\infty ,\xi_{n}^\infty - \xi_n^{\alpha_n}). \label{eq:alpha1}  \end{equation} 

We first  treat  $\Cov(\xi_0^\infty ,\xi_n^{\alpha_n}) $. Note that \begin{align} 
\Cov(\xi_0^\infty ,\xi_n^{\alpha_n})
=\sum_{k} \Cov(\xi_0^\infty ,\xi_n^{k} \1_{\{\alpha_n=k\}})
=\sum_{k} \Cov(\xi_0^\infty - \xi_0^{k} ,\xi_n^{k} \1_{\{\alpha_n=k\}})
\end{align}
where the last equality holds by Lemma \ref{lem:independent_xi}. 
Note that $\xi_0^\infty - \xi_0^{k}$ is measurable with respect to $\sigma(\T^-,V(\T^-))$,  and $\{\alpha_n=k\}\in\sigma(\T^+)$. Then $\xi_0^\infty - \xi_0^{k}$ and $\{\alpha_n=k\}$ are independent, we deduce by Lemma \ref{lem:xi_to_infty} that for any $k\ge 0$, 
\begin{align*} |\Cov(\xi_0^\infty - \xi_0^{k} ,\xi_n^{k} \1_{\{\alpha_n=k\}})|
&\le 2 \e[|\xi_0^\infty - \xi_0^{k}|   \1_{\{\alpha_n=k\}}]
\\
&= 2 \e[|\xi_0^\infty - \xi_0^{k}| ] \p( \alpha_n=k)\\
&\lesssim (1+k)^{(4-d)/2}\, \p( \alpha_n=k).
\end{align*} 

\noindent Hence $$ | \Cov(\xi_0^\infty ,\xi_n^{\alpha_n})| \lesssim \e[(1+\alpha_n)^{(4-d)/2}]
\lesssim \mathbb E[ (1+\C{n}-2\min_{0\le i\le n}\C{i})^{(4-d)/2}] ,$$ 

\noindent where the last inequality follows from \eqref{eq:determine}. 
This in view of \eqref{esp:C-2minC} yield that for $d>8$, we have \begin{equation}    \sum_n | \Cov(\xi_0^\infty ,\xi_n^{\alpha_n})| < \infty. \end{equation}

For the term $\Cov(\xi_0^\infty ,\xi_{n}^\infty - \xi_{n}^{\alpha_n})$, 
it suffices to check that \begin{equation}    \sum_n \e[|  \xi_{n}^\infty - \xi_n^{\alpha_n} |] =   \sum_n \sum_{k} \e [|  \xi_{n}^\infty - \xi_n^{k} | \1_{\{\alpha_n=k\}})] < \infty.  \label{compa-alpha}\end{equation}

By \eqref{eq:invariance}, $\e [|  \xi_{n}^\infty - \xi_n^{k} | \1_{\{\alpha_n=k\}}]= \e [|  \xi_{0}^\infty - \xi_0^{k} | \1_{\{\d(-n, 0)\in\{2k,2k+1\}\}}],$ and the indicator $\1_{\{\d(-n, 0)\in\{2k,2k+1\}\}}$ is measurable with respect to $\mathcal T^-$.
By the union bound 
\begin{align}  
\e [|  \xi_{0}^\infty - \xi_0^{k} | \, | \T^- ] 
&\le \p\Big( 0 \in V(-\infty, 0) \backslash V(N_k(0)) \, | \T^- \Big) \nonumber
\\
& \le \sum_{i=0}^\infty \sum_{j=(k-i)^+}^\infty Z_j^{(i)} \p( S_{j+i}=0) \nonumber
\\
&\lesssim \sum_{i=0}^\infty \sum_{j=(k-i)^+}^\infty Z_j^{(i)} (j+i)^{-d/2},
\end{align}
where $Z^{(i)}_j$ denotes the number of vertices at $j$-th generation of $\T^-_i$, and we admit $Z^{(i)}_j=0$ when $j<0$. Consequently, 
\begin{equation} \e [|  \xi_{0}^\infty - \xi_0^{k} |] \lesssim  \sum_{i=0}^\infty \sum_{j=(k-i)^+}^\infty  (j+i)^{-d/2}  
\lesssim k^{2-d/2}. \label{eq:xiinfty-k} \end{equation}

Since each vertex $u$ is visited $\mathrm{deg}(u)$ times along the sequence $(u_i)$, we have
\[\#\setof{u_i}{i <0,\, \d(u_i, u_0)\in\{2 k,2k+1\}} \le \sum_{i'\ge 0} (Z_{2k-i'+2}^{(i')}+2Z_{2k-i'+1}^{(i')}+Z_{2k-i'}^{(i')}).\]

Therefore   
\begin{align}   
\sum_n \e[|  \xi_{n}^\infty - \xi_n^{\alpha_n} |]   
\lesssim &\sum_{k}\e\bracks*{ \sum_{i=0}^\infty \sum_{j=k-i}^\infty Z_j^{(i)} (j+i)^{-\frac d 2} \, \sum_{i'\ge0} (Z_{2k-i'+2}^{(i')}+2Z_{2k-i'+1}^{(i')}+Z_{2k-i'}^{(i')})}\nonumber\\
=&\sum_{k} A_{k,\eqref{eq:gw2}} + \sum_{k} B_{k,\eqref{eq:gw2}},  \label{eq:gw2}
\end{align} 

\noindent with \begin{align*} 
A_{k, \eqref{eq:gw2}}&:=   \sum_{i=0}^\infty \sum_{j=k-i}^\infty Z_j^{(i)} (j+i)^{-\frac d 2} \, (Z_{2k-i+2}^{(i)}+2Z_{2k-i+1}^{(i)}+Z_{2k-i}^{(i)}),  \\
 B_{k, \eqref{eq:gw2}}&:=  \sum_{i=0}^\infty \sum_{j=k-i}^\infty Z_j^{(i)} (j+i)^{-\frac d 2} \, \sum_{i'\ne i} (Z_{2k-i'+2}^{(i')}+2Z_{2k-i'+1}^{(i')}+Z_{2k-i'}^{(i')}). \end{align*}

For the term $A_{k,\eqref{eq:gw2}}$, we have 
\begin{align*}  A_{k,\eqref{eq:gw2}}
&= \sum_{i \le 2k+1} \sum_{j=k-i}^\infty  (j+i)^{-d/2} \e[ Z_j^{(i)} (Z_{2k-i+2}^{(i)}+2Z_j^{(i)} (Z_{2k-i+1}^{(i)}+Z_{2k-i}^{(i)})]
\\
&\lesssim \sum_{i \le 2k+1} \sum_{j=k-i}^\infty  (j+i)^{-d/2}  \, j\wedge (2k-i+2) 
\lesssim k^{3- d/2}, \end{align*}

\noindent where we have used the fact that $\e[Z_j^2] =O(j)$. This sum over $k$ converges if $d>8$. 

For the term $B_{k,\eqref{eq:gw2}}$, since $Z^{(i)}$ and $Z^{(i')}$ are independent, we have
\begin{align*}  
B_{k,\eqref{eq:gw2}}
&= \sum_{i\ge 0} \sum_{j=k-i}^\infty  (j+i)^{-d/2} \sum_{i'\ne i, i'\le 2k+1} 4
\lesssim k^{2-d/2},\end{align*}
which is summable for $d\ge 7$.

Combine \eqref{eq:gw2}   with \eqref{compa-alpha}, we complete the proof. 
\end{proof}

\begin{corollary}\label{coro:var}
Let $d>8$, then
\[
\kappa:=\lim_{n\rightarrow\infty}\frac{\Var(Y_n)}{n}\in[0,\infty)\text{ exists}.
\]
\end{corollary}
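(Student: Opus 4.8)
The plan is to derive Corollary~\ref{coro:var} from the covariance estimate of Lemma~\ref{l:cov2} via a standard Cesàro/subadditivity-type argument, being careful about the boundary terms coming from the difference between $\xi_i^\infty$ and the exactly stationary structure. First I would expand the variance using \eqref{Yn=xiinfty} and stationarity of $(\xi_i^\infty)_{i\in\z}$ (which follows from \eqref{eq:invariance}): writing $\sigma^2:=\Var(\xi_0^\infty)$ and $\gamma_m:=\Cov(\xi_0^\infty,\xi_m^\infty)$, one has
\begin{equation*}
\Var(Y_n)=\sum_{i=1}^n\sum_{j=1}^n\Cov(\xi_i^\infty,\xi_j^\infty)
= n\,\sigma^2 + 2\sum_{m=1}^{n-1}(n-m)\,\gamma_m.
\end{equation*}
Dividing by $n$ gives $\frac{\Var(Y_n)}{n}=\sigma^2+2\sum_{m=1}^{n-1}\bigl(1-\tfrac mn\bigr)\gamma_m$.

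Next I would invoke Lemma~\ref{l:cov2}, which asserts $\sum_{m\ge 0}|\gamma_m|<\infty$ for $d>8$. This absolute summability is exactly what is needed to pass to the limit: by dominated convergence for series (each term $(1-m/n)\gamma_m\to\gamma_m$ pointwise in $m$, and $|(1-m/n)\gamma_m|\le|\gamma_m|$ with $\sum|\gamma_m|<\infty$), we get
\begin{equation*}
\lim_{n\to\infty}\frac{\Var(Y_n)}{n}=\sigma^2+2\sum_{m=1}^\infty\gamma_m=:\kappa.
\end{equation*}
The limit $\kappa$ is finite because the series converges absolutely; and $\kappa\ge 0$ since it is a limit of the nonnegative quantities $\Var(Y_n)/n$. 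This establishes $\kappa\in[0,\infty)$, which is all the corollary claims (positivity $\kappa>0$ is deferred to Section~\ref{subsection:kappa>0}).

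The only genuine subtlety — and the step I would be most careful about — is justifying that $(\xi_i^\infty)_{i\in\z}$ is a stationary sequence so that $\Cov(\xi_i^\infty,\xi_j^\infty)$ depends only on $|i-j|$; this is precisely the content of the remark following \eqref{def-Yn}, namely that \eqref{eq:invariance} makes $(\1_{\{V(i)\notin V(-\infty,i)\}})_{i\in\z}=(\xi_i^\infty)_{i\in\z}$ identically distributed, and more is true: the whole family $(V(i+j)-V(i))_j$ having the same law for every $i$ forces joint stationarity of the $\xi^\infty$ process, so pair correlations indeed depend only on the gap. Everything else is the routine Cesàro argument above; no further obstacle is expected, since Lemma~\ref{l:cov2} has already done the analytic work.
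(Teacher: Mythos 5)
Your proposal is correct and follows essentially the same route as the paper: both rest on the stationarity coming from \eqref{eq:invariance} together with the absolute summability of covariances in Lemma~\ref{l:cov2}, the only difference being a cosmetic one in how the limit is taken (your dominated-convergence argument on the weights $(1-m/n)$ versus the paper's Ces\`aro argument applied to the partial sums $a_j=\tfrac12\Var(\xi_0^\infty)+\sum_{i=1}^{j-1}\Cov(\xi_0^\infty,\xi_i^\infty)$). Your attention to joint stationarity of $(\xi_i^\infty)$, not just equality of marginals, is exactly the point the paper uses implicitly when rewriting $\Cov(\xi_i^\infty,\xi_j^\infty)$ as $\Cov(\xi_0^\infty,\xi_{j-i}^\infty)$.
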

\begin{proof}
Denote
\[
a_j:=\frac 1 2 \Var(\xi_j^\infty)+\sum_{i=1}^{j-1}\Cov(\xi_i^\infty,\xi_j^\infty),
\]
then $
\Var(Y_n)=2(a_1+\dots+a_n),
$
and it suffices to show that $\lim_{n\rightarrow\infty}a_n$ exists.

By  \eqref{eq:invariance}, 
\[
a_j=\frac 1 2 \Var(\xi_0^\infty)+\sum_{i=1}^{j-1}\Cov(\xi_0^\infty,\xi_{i}^\infty).
\]
We conclude with \eqref{cov-2terms}.
 \end{proof}

\subsection{Non-triviality of {$\kappa$}{}}\label{subsection:kappa>0}
In this subsection, we show that 
$\kappa:=\lim_{n\rightarrow\infty}\frac 1 n\Var(Y_n)$ is positive. 

For each vertex $v\in\T^+$, we denote $\overleftarrow v$ the vertex on the shortest path from $v$ to $\overleftarrow\varnothing$ that is adjacent to $v$. Note that for any $\varnothing_k \in \Sp=\{\varnothing_0,\varnothing_1, \varnothing_2, \cdots\}$ with $k\ge 1$, $\overleftarrow \varnothing_k= \varnothing_{k-1}$. For any vertex $v \not\in \Sp$, $\overleftarrow v$ coincides with the notion of parent defined from the contour process \eqref{eq:C}.

For notational convenience, we artificially add a point $\overleftarrow\varnothing$ attached to the root $u_0=\varnothing$, located at $V(\overleftarrow\varnothing)=\mathbf e_1$ (the unit vector along first coordinate in $\mathbb Z^d$), throughout this section. The choice of $\mathbf e_1$ is  arbitrary. We stress that  $\overleftarrow\varnothing$ it is not considered part of $\T^\infty$. The introduction of $\overleftarrow\varnothing$ is meant to ensure the consistency of the forthcoming Definition \ref{defn:bad}.

We consider a slightly modified version of $Y_n$: 
\begin{equation}    
\widetilde Y_n:=\sum_{i=1}^n \1_{\{V_i\not\in V(-\infty,i)\cup \{V(\overleftarrow\varnothing)\}\}}, \label{tilde-Y}
\end{equation}

Notice that $0\le Y_n- \widetilde Y_n\le 1$,   we have \begin{equation}  \lim_{n\rightarrow\infty}\frac 1 n\Var(\widetilde Y_n)=\kappa. \label{var-tildeY}   \end{equation}

The idea for proving the positivity of $\kappa$ is that among $V(1),\cdots, V(n)$, there are certain points -- referred to below as  ``bad points'' --  that make no contribution to $V[0,n]\cup V(\overleftarrow\varnothing)$. The number of bad points $N_n$,  has a variance that grows linearly with $n$;  this fact and the independence between $N_n$ and a certain auxiliary process $\widehat Y_n$ (see Lemma \ref{lem:YN}), will imply the positivity of $\kappa$.

\begin{definition}\label{defn:bad}
We say a point $v\in\mathcal T^+$ is bad, if
\begin{enumerate}
\item $\d(v,u_0)$ is odd;
\item $\deg(v)=1$, i.e. $v$ is a leaf of $\T^\infty$;
\item $V(v)=V(\overleftarrow{\overleftarrow v})$.
\end{enumerate}
\end{definition}

We remark that every argument of this subsection applies if $\d(v, u_0)$ is chosen to be even instead of odd.

For simplicity, we  write \[
\Delta C(i)=\C{i}-\C{i-1},\quad \Delta V(i)=V(i)-V({i-1}).
\]
Then the following statements are immediate:
\begin{lemma}\label{lem:increment}
\begin{enumerate}
\item
The sequence $(\Delta C(i))_{i\ge 1}$ is i.i.d., taking values in $\{1,-1\}$ with probability $\frac 1 2$ each.
\item
When $\Delta C(i)=1$, $\Delta V(i)$ is a uniform unit vector independent of $\sigma(C(-\infty,i-1],V(-\infty,i-1])$.
\item 
When $\Delta C(i)=-1$, $\Delta V(i)$ is the vector from $u_{i-1}$ to $\overleftarrow{u_{i-1}}$, completely determined by $(C(-\infty,i-1],V(-\infty,i-1])$.
\item In particular, $u_{2i-1}$ is bad if and only if $\Delta C(2i-1)=1,\Delta C(2i)=-1$, and $-\Delta V(2i-1)=\Delta V(2i)$ is the vector from $u_{2i-2}$ to its parent. Therefore,
\[
\mathbb P\parsof{u_{2i-1}\text{ is bad}}{C(-\infty,2i-2],V(-\infty,2i-2]}=\frac 1{8d},
\]
and the event $\{u_{2i-1}\text{ is bad}\}$ is independent of the past $\sigma$-algebra  $\sigma(C(-\infty,2i-2],V(-\infty,2i-2])$.
\end{enumerate}
\end{lemma}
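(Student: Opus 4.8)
The statement to prove is Lemma~\ref{lem:increment}, consisting of four assertions about the joint increments of the contour process $C$ and the walk $V$. The plan is to read everything off the construction in items (1)--(4) of the introduction together with Lemma~\ref{lem:C} and the definition~\eqref{eq:C} of the contour process.

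For part (1): by Lemma~\ref{lem:C}, $(\C{n})_{n\ge 0}$ is a simple random walk, so its increments $\Delta C(i)=\C{i}-\C{i-1}$ for $i\ge 1$ are i.i.d.\ uniform on $\{+1,-1\}$; this is nothing more than restating that lemma. For part (2): when $\Delta C(i)=1$, the vertex $u_i$ is a \emph{newly visited child} of $u_{i-1}$, so by the construction of $V_{\T^\infty}$ the displacement $\Delta V(i)=V(u_i)-V(u_{i-1})=X(e)$, where $e$ is the edge between $u_{i-1}$ and $u_i$, and this $X(e)$ is an independent copy of $\theta$; under assumption~\eqref{eq:assmption}, $\theta$ is uniform over the $2d$ unit vectors. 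The key point is that this particular edge $e$ has not been traversed before time $i$ (a first visit to a child contributes a $+1$ to the contour), hence $X(e)$ is independent of everything determined by $(C(-\infty,i-1],V(-\infty,i-1])$; I would phrase this as: the edge variables $X(e)$ that were consulted up to time $i-1$ correspond to edges already traversed, and $e$ is not among them, while the contour increments up to $i-1$ are functions of the tree shape only. For part (3): when $\Delta C(i)=-1$, by~\eqref{eq:C} $u_i$ is the parent of $u_{i-1}$, i.e.\ the edge between them \emph{has} been traversed before (when we first descended from $u_i$ to $u_{i-1}$), so $V(u_i)$ was already determined by the past; concretely $\Delta V(i)$ is minus the displacement used on the earlier downward traversal, hence a deterministic function of $(C(-\infty,i-1],V(-\infty,i-1])$ — indeed it is simply the vector from $u_{i-1}$ to $\overleftarrow{u_{i-1}}$.

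For part (4): combine (2) and (3) with Definition~\ref{defn:bad}. A vertex $v$ at odd distance from $u_0$ is visited, and if it is a leaf ($\deg v=1$) it is visited exactly once, at some odd time $2i-1$ (odd because graph distance equals contour height, which at a first visit has the parity of the height); moreover being a leaf forces the contour to go $+1$ at step $2i-1$ and immediately $-1$ at step $2i$, i.e.\ $\Delta C(2i-1)=1,\Delta C(2i)=-1$. The third condition $V(v)=V(\overleftarrow{\overleftarrow v})$ then says that the fresh unit increment $\Delta V(2i-1)$ (which equals minus $\Delta V(2i)$ by part (3), since $\Delta V(2i)$ is the step from the leaf back to its parent) cancels the previous step $V(\overleftarrow v)-V(\overleftarrow{\overleftarrow v})$; so $u_{2i-1}$ is bad iff $\Delta C(2i-1)=1$, $\Delta C(2i)=-1$, and $\Delta V(2i-1)$ equals a specific unit vector determined by $(C(-\infty,2i-2],V(-\infty,2i-2])$ (the negative of the step from $u_{2i-2}$ to its parent). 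Since conditionally on the past $\sigma(C(-\infty,2i-2],V(-\infty,2i-2])$, the pair $(\Delta C(2i-1),\Delta C(2i))$ is uniform on $\{\pm1\}^2$ by part (1) and, on the event $\Delta C(2i-1)=1$, $\Delta V(2i-1)$ is an independent uniform unit vector by part (2), the conditional probability is $\tfrac14\cdot\tfrac1{2d}=\tfrac1{8d}$, and this value does not depend on the past, yielding the claimed independence.

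I do not expect a real obstacle here: every clause follows directly from unwinding the construction, and the only mild care needed is a bookkeeping point — checking that a first visit to a vertex happens at a time whose parity matches the graph distance to $u_0$ (so that leaves at odd distance are visited at odd times), and that the edge consulted at a $+1$-step of the contour is genuinely fresh. I would make both of these precise by a one-line induction on the contour/exploration using Remark~\ref{rmk:determine}, in particular the identity $\d(u_i,u_j)=\C{i}+\C{j}-2\min_{i\le k\le j}\C{k}$ and the fact that $\C{i}=\d(u_0,u_i)$ when $u_i$ is first visited along the forward exploration. Everything else is a direct appeal to items (1)--(3) of this very lemma.
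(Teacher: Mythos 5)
Your argument is correct and is exactly the intended (and in the paper, unwritten) justification: the paper states the lemma as immediate from the construction, and your unwinding — i.i.d.\ contour increments from Lemma~\ref{lem:C}, a fresh edge variable $X(e)$ at every $+1$ contour step, a previously-determined parent position at every $-1$ step (for spine edges this uses that the descent occurred at a negative time, which is indeed contained in $(-\infty,i-1]$), the parity bookkeeping $\d(u_0,u_i)\equiv i \pmod 2$ (each move changes the distance to $u_0$ by $\pm1$; note the contour height itself need not equal that distance), and the product $\tfrac14\cdot\tfrac1{2d}=\tfrac1{8d}$ — is the same route one would write out. One harmless slip: by Definition~\ref{defn:bad}, badness of $u_{2i-1}$ means $\Delta V(2i-1)=V(\overleftarrow{u_{2i-2}})-V(u_{2i-2})$, i.e.\ the step \emph{from} $u_{2i-2}$ \emph{to} its parent rather than its negative (your parenthetical, echoing the sign as printed in part (4) of the lemma, contradicts your own preceding ``cancels'' sentence); since either way the condition pins $\Delta V(2i-1)$ to one specific past-measurable unit vector, the value $\tfrac1{8d}$ and the independence claim are unaffected.
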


For each $i\ge 1$ such that $u_{2i-1}$ is bad, we delete $u_{2i-1}$ and $u_{2i}$ from the sequence $(u_n)_{n\ge 0}$, and denote the remaining sequence by $(\widehat u_n)_{n\ge 0}$. As is shown in Figure \ref{fig:bad},  $(\widehat u_n)_{n\ge 0}$ is still depth-first exploration sequence of an infinite tree $\widehat\T$ rooted at $u_0$. Define $(\widehat C_{n})_{n\ge 0}$ and $(\widehat V(n))_{n\ge 0}$ to be its contour and spatial positions, and denote by $X_i$ the number of bad points (that have been deleted) between $\widehat u_{2i}$ and $\widehat u_{2i+1}$. In other words, $X_i$ equals half of the amount of points deleted between $\widehat u_{2i},\widehat u_{2i+1}$.

\begin{figure}[ht]
\includegraphics[height=5cm]{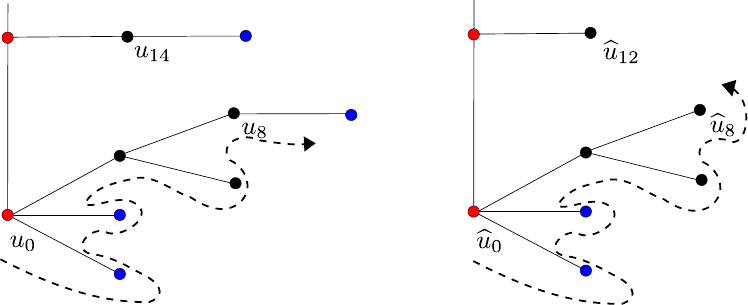}
\caption{\small In the left-side tree, we can enumerate $(\Delta C(2i-1,2i))_i$ as $((1,-1);(1,-1);(1,1);(-1,1);(1,-1);(-1,-1);(-1,1);(1,-1),\dots)$. There are four occurrences of $(1,-1)$, so we have four possible bad points, marked in blue. Suppose that the last two are bad, after deleting them we obtain the right-side tree with $\widehat C=((1,-1);(1,-1);(1,1);(-1,1);(-1,-1);(-1,1)\dots)$. In notation of Lemma \ref{lem:YN}, we have $X_4=X_6=1$, because we delete a bad point after the $4$-th and $6$-th parenthesis in $\widehat C$, or equivalently, after $\widehat u_8$ and $\widehat u_{12}$.}
\label{fig:bad}
\end{figure}

\begin{lemma}\label{lem:YN} \begin{enumerate}
\item
The sequence $(X_i)_{i\ge 0}$ are i.i.d. geometric with parameter $\frac 1 {8d}$.
\item 
$(X_i)_{i\ge 0}$ is independent of $(\widehat C,\widehat V)$.
\item In particular, 
\[
N_n:=X_0+\dots+X_n,
\]
\[
\widehat Y_n:=\sum_{i=1}^n\1_{\{\widehat V(n)\not\in \widehat V(0,n)\cup V(-\infty,0]\cup V(\overleftarrow\varnothing)\}}
\]
are independent.
\item Recall $\widetilde Y$ from \eqref{tilde-Y}, we have $$\widetilde Y_{n+2N_n}=\widehat Y_n.$$
\end{enumerate}
\end{lemma}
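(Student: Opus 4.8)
The plan is to prove the four claims in Lemma~\ref{lem:YN} in order, exploiting the independence structure furnished by Lemma~\ref{lem:increment}(4), which is the crux of the whole construction.

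\medskip

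\textbf{Claims (1) and (2).} The key observation is that the deletion procedure is a \emph{reading-off} of i.i.d.\ coin flips. By Lemma~\ref{lem:increment}(4), conditionally on the past $\sigma$-algebra $\sigma(C(-\infty,2i-2],V(-\infty,2i-2])$, the event $\{u_{2i-1}\text{ is bad}\}$ has probability $\frac1{8d}$ and is independent of that past. Moreover, \emph{on the complement} $\{u_{2i-1}\text{ not bad}\}$, the pair $(\Delta C(2i-1),\Delta C(2i)),(\Delta V(2i-1),\Delta V(2i))$ together with the configuration to that point are precisely what gets recorded into $(\widehat C,\widehat V)$; whereas on $\{u_{2i-1}\text{ bad}\}$ the two steps $u_{2i-1},u_{2i}$ are a deterministic excursion (up and immediately back, with a uniformly chosen unit vector out and its negative back) that is discarded. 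I would make this precise by an induction on $i$: define the filtration generated by the reconstructed process $(\widehat C,\widehat V)$ up to time $2i$ together with $X_0,\dots,X_{i-1}$, and show that at each step the number $X_i$ of consecutive bad double-steps inserted before $\widehat u_{2i+1}$ is a geometric$(\frac1{8d})$ random variable independent of everything observed so far. Geometricity follows because each trial is an independent $\frac1{8d}$-coin (by Lemma~\ref{lem:increment}(4)) and we count the number of successes before the first failure; independence of $(\widehat C,\widehat V)$ follows because the ``failure'' step and all the genuine (retained) steps carry exactly the information in $(\widehat C,\widehat V)$, while the deleted steps carry only the independent coin values and the independent uniform directions used to build them, which contribute nothing to $(\widehat C,\widehat V)$. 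This simultaneously gives (1) and (2).

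\medskip

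\textbf{Claim (3).} This is immediate from (1) and (2) together with measurability: $N_n = X_0+\dots+X_n$ is a function of $(X_i)_{i\ge0}$, while $\widehat Y_n$ is, by its definition, a function of $(\widehat V(0),\dots,\widehat V(n))$ and of $V(-\infty,0]$ and $V(\overleftarrow\varnothing)$; the latter two are part of the ``past'' $\sigma$-algebra, hence $\sigma$-measurable functions of $(\widehat C,\widehat V)$ (note the deletions only ever occur in $\T^+$, so $V(-\infty,0]$ and $V(\overleftarrow\varnothing)$ are untouched, and indeed $\widehat V$ restricted to negative indices equals $V$ there). Thus $\widehat Y_n$ is $\sigma(\widehat C,\widehat V)$-measurable, which is independent of $(X_i)_{i\ge0}$ by (2), and hence independent of $N_n$.

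\medskip

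\textbf{Claim (4).} Here I would argue by tracking where the deleted pairs sit. By construction, each deleted pair $u_{2i-1},u_{2i}$ with $u_{2i-1}$ bad is a leaf-excursion whose spatial value $V(u_{2i-1})$ equals $V(\overleftarrow{\overleftarrow{u_{2i-1}}})$, a point already visited strictly earlier (note $\overleftarrow{\overleftarrow{u_{2i-1}}}$ is visited before $u_{2i-1}$ in the contour order, using the added dummy vertex $\overleftarrow\varnothing$ to handle the boundary case), and $V(u_{2i})=V(u_{2i-2})$ returns to the previous position. Consequently the deleted indices contribute $0$ to the count $\widetilde Y$: the first of the pair fails the test $\{V_i\notin V(-\infty,i)\cup\{V(\overleftarrow\varnothing)\}\}$ because $V(\overleftarrow{\overleftarrow{u_{2i-1}}})$ lies in $V(-\infty,i)\cup\{V(\overleftarrow\varnothing)\}$, and the second equally so. Moreover, deleting them does not change, for any retained index $j$, whether $V(\widehat u_j)$ is new: removing previously-seen points from the ``past set'' $V(-\infty, \cdot)$ cannot turn a repeated value into a fresh one, because every value carried by a deleted point coincides with a value carried by a retained point appearing earlier. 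Hence the running indicator is the same along the retained subsequence as it was originally, so summing the first $n+2N_n$ original indicators (which includes exactly $2N_n$ deleted zero-contributions and the first $n$ retained ones) equals summing the first $n$ retained indicators, i.e.\ $\widetilde Y_{n+2N_n}=\widehat Y_n$. Care is needed to line up the index shift: after deleting the first $N_n$ bad pairs among $u_1,\dots$, the retained vertex $\widehat u_n$ is exactly $u_{n+2N_n}$, so the claim is an identity of partial sums once one checks that no bad pair straddles the cutoff, which holds since by definition $X_i$ counts the bad pairs strictly between $\widehat u_{2i}$ and $\widehat u_{2i+1}$ and $N_n$ sums these up to index $n$.

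\medskip

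The main obstacle I anticipate is making the inductive bookkeeping in Claims (1)--(2) fully rigorous: one must set up the right filtration so that ``the retained steps generate $\sigma(\widehat C,\widehat V)$ and the deleted steps generate an independent sequence of coins and uniform directions'' is a clean statement rather than a picture, and in particular handle carefully that the \emph{decision} to stop deleting (a ``not bad'' outcome) is itself part of the retained information yet its randomness is independent of the past — this is exactly where Lemma~\ref{lem:increment}(4) is used, and it must be invoked at the correct stopping time. The spatial identity in Claim (4) and the index alignment are routine once the deletion map is defined precisely.
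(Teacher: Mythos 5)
Your route is conceptually the paper's: everything is made to rest on Lemma~\ref{lem:increment}(4); claim (1) is read off from it, (3) follows from (2) by measurability (noting, as you do, that deletions occur only in $\T^+$), and (4) comes from the fact that every deleted pair only revisits values already present among earlier retained values or equal to $V(\overleftarrow\varnothing)$, i.e.\ the paper's set identity between the original and the pruned ranges. The real difference is how (2) is established. The paper does not run your induction: it fixes $c[1,2n],v[1,2n],x[0,n]$ and computes the conditional probability given $\F_0$ of $\{\Delta\widehat C[1,2n]=c,\Delta\widehat V[1,2n]=v,X[0,n]=x\}$ by reinserting the $(1,-1)$ excursions into the contour; each inserted pair costs exactly $2^{-2}\cdot(2d)^{-1}=(8d)^{-1}$ (two contour signs plus one forced uniform direction), so the probability factorizes as $(8d)^{-s_n}$ times a quantity not depending on $x[0,n]$, which yields (1) and (2) in one stroke. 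Your inductive scheme can be made rigorous, but note two things. First, it is precisely this factorization that your deferred ``bookkeeping'' must reproduce, and the paper's computation is arguably the cleaner way to do it. Second, your description of the discarded steps is off: conditionally on $\{u_{2i-1}\text{ bad}\}$ the outgoing direction is \emph{not} an independent uniform vector; it is forced to equal the direction from $u_{2i-2}$ to $\overleftarrow{u_{2i-2}}$, hence is a deterministic function of the retained past. That determinism is exactly what you need — on the bad event the deleted pair carries no information beyond incrementing the count, while on the complement the retained double-step has the law of an original double-step conditioned to be non-bad, a law depending only on the retained past and not on how many deletions preceded it. Phrased this way, your induction closes; phrased with ``independent uniform directions used to build them,'' it does not.

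On (4), your substantive argument (deleted indices contribute $0$ to $\widetilde Y$, with the dummy $\overleftarrow\varnothing$ handling bad children of $\varnothing_0$, and removing them does not change any retained indicator) is correct and is what the paper means. But your alignment statement ``$\widehat u_n=u_{n+2N_n}$'' is not right with $N_n=X_0+\dots+X_n$ as defined: since the deletions counted by $X_i$ sit between $\widehat u_{2i}$ and $\widehat u_{2i+1}$, the original index of $\widehat u_n$ is $n+2\sum_{i=0}^{\lceil n/2\rceil-1}X_i$, which involves only about half of the $X_i$'s entering $N_n$, so your closing sentence about ``no bad pair straddling the cutoff'' does not do what you want. (To be fair, the paper's statement and one-line proof of (4) share this indexing looseness; its later use of $\lambda=\frac1{2(8d-1)}$, i.e.\ $\e[N_n]\sim n/(2(8d-1))$, corresponds to the corrected count.) The claim should be aligned as $\widetilde Y_{m_n}=\widehat Y_n$ with $m_n$ the original index of $\widehat u_n$; once that is fixed, your duplication argument finishes the proof.
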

\begin{proof}
The  claim (1) follows immediately from Lemma \ref{lem:increment} (4). 
By definition of bad points and $N_n$, $V[0,n+2N_n]\cup V(\overleftarrow\varnothing)=\widehat V[0,n]\cup V(\overleftarrow\varnothing)$, from which we easily obtain claim (4).
The claim (3) is an immediate corollary of claim (2), so it suffices to prove (2).

Let $c_1,\dots,c_{2n}\in \{1,-1\}$, $v_1,\dots,v_{2n}\in \{\text{unit vectors in }\mathbb Z^d\}$, $x_0,\dots,x_n\in\mathbb N$ be arbitrarily fixed, and write $s_n:=x_0+\dots+x_n$ for simplicity.

Let us calculate
\begin{align}\label{eq:long}
p_{\eqref{eq:long}}:= \p\parsof{\Delta\widehat C[1,2n]=c[1,2n],\Delta\widehat V[1,2n]=v[1,2n], X[0,n]=x[0,n]}{\F_0},
\end{align}

\noindent with $$\F_0= \sigma\{C(-\infty,0],V(-\infty,0]\}.$$

We first look at the contour process. The event \eqref{eq:long} implies that $\Delta C$, the increments of the original contour process, is
\begin{equation}\label{eq:contourc}
\begin{aligned}
\Delta C[1,2n+2s_n]
&=((1,-1)^{x_0},c_1,c_2,(1,-1)^{x_1},\dots,c_{2n-1},c_{2n},(1,-1)^{x_{n}})\\
&=:\widetilde c[1,2n+2s_n] ,
\end{aligned}
\end{equation}
where $(1,-1)^x$ means inserting $x$ times of $(1,-1)$. By Lemma \ref{lem:increment}(1),  \[ p_{\eqref{eq:contourc}}:= \p\Big(\Delta C[1,2n+2s_n]= \widetilde c[1,2n+2s_n]\, |\, \F_0, \Delta V[1, \infty)\Big) =2^{-2n-2s_n}.
\]

Notice that given $\Delta C[1,2n+2s_n]$, $\Delta V[1,2n+2s_n]$ can be recovered from  $\Delta\widehat V[1,2n]=v[1,2n]$. In other words, $$ 
p_{\eqref{eq:long}}= \p\parsof{\Delta C[1,2n+2s_n]=\widetilde c[1,2n+2s_n],\Delta V[1,2n+2s_n]= \widetilde v[1,2n+2s_n]}{\F_0},
$$

\noindent for some sequence $\widetilde v[1,2n+2s_n]$ which is a deterministic function of $\widetilde c[1,2n+2s_n]$ and $v[1,2n]$.

Now, observe that given  
$\Delta C[1,2n+2s_n]$, by Lemma \ref{lem:increment} (2), $(\Delta V(i)\1_{\{\Delta C(i)=1\}})_{1\le i\le 2n+2s_n}$ equals any fixed (and compatible) sequence with probability
\begin{equation}    
p_{\eqref{deltaVC=1}}=(2d)^{-\sum_{1\le i\le 2n+2s_n}\1_{\{\Delta C(i)=1\}}}
=(2d)^{-\sum_{1\le i\le 2n}\1_{\{\Delta c(i)=1\}}-s_n}, \label{deltaVC=1}
\end{equation}

\noindent where in the second equality, we have used the fact that on the set $\{\Delta\widehat C[1,2n]=c[1,2n]\}$, $$\sum_{1\le i\le 2n+2s_n}\1_{\{\Delta C(i)=1\}}=\sum_{1\le i\le 2n}\1_{\{\Delta \widehat C(i)=1\}}+ s_n= \sum_{1\le i\le 2n}\1_{\{\Delta c(i)=1\}}+s_n.$$

 By Lemma \ref{lem:increment}(3), the remaining increments $(\Delta V(i)\1_{\{\Delta C(i)=-1\}})_{1\le i\le 2n+2s_n}$ are deterministic function of $c[1, 2n], v[1, 2n]$ and $V(\varnothing_k)_{0\le k \le n}$. Hence there is some deterministic measurable function $\Psi$ such that 
\begin{align}    p_{\eqref{eq:long}}
&= p_{\eqref{eq:contourc}} p_{\eqref{deltaVC=1}} \Psi(c[1, 2n], v[1, 2n], V(\varnothing_k)_{0\le k \le n})\nonumber
\\
&=(8d)^{-s_n}  \, 2^{-2n}(2d)^{-\sum_{1\le i\le 2n}\1_{\{\Delta c(i)=1\}}} \Psi(c[1, 2n], v[1, 2n], V(\varnothing_k)_{0\le k \le n}).\label{eq:315}
\end{align}

Since $\p(X[0,n]=x[0,n])= (8d)^{-s_n}$, and the remaining terms of \eqref{eq:315} does not depend on $x[0,n]$, we take the expectation of $p_{\eqref{eq:long}}$ and obtain the independence of $X[0, n]$ and $(\Delta \widehat C[1, 2n], \Delta \widehat V[1, 2n])$.   Claim (2) then follows. 

\end{proof}

We are now ready to prove that the variance is indeed linear:

\begin{proof}[Proof of Proposition \ref{prop:linear_variance}] By Corollary \ref{coro:var}, it remains to show that $\kappa>0$. 
Assume, for contradiction,  that  $\kappa=0$.  By \eqref{var-tildeY} and Chebyshev's inequality, for every $\varepsilon\in(0,1)$, when $n$ is large enough, 
\begin{align}\label{ev1}
\mathbb P(|\widetilde Y_n-\mathbb E[\widetilde Y_n]|<\varepsilon\sqrt n)>1-\varepsilon.
\end{align}

By Lemma \ref{lem:YN}, $\widetilde Y_{n+2N_n}=\widehat Y_n$, and
\[
\frac{N_n-\lambda n}{\sqrt{n}}\rightarrow\mathcal N(0,\sigma^2)
\]
is independent of $\widehat Y_n$, where the parameters $\lambda=\frac 1 {2(8d-1)}$ and $\sigma^2=32d^2-4d$ can be easily determined.
By taking $\varepsilon$ small enough, for every $n$ large enough,
\begin{align}\label{ev2}
\mathbb P(N_n<\lambda n-\sqrt n)>3\sqrt{\varepsilon},
\end{align}
\begin{align}\label{ev3}
\mathbb P(N_n>\lambda n+\sqrt n)>3\sqrt{\varepsilon}.
\end{align}
For each $n$ large enough, with probability at least $3\sqrt{\varepsilon}-\varepsilon>2\sqrt{\varepsilon}$, \eqref{ev2} happens for $n$ and \eqref{ev1} happens for $n'=n+2\lambda n-2\sqrt n$ simultaneously, in other words,
\[
|\widetilde Y_{n+2\lambda n-2\sqrt n}-\mathbb E[\widetilde Y_{n+2\lambda n-2\sqrt n}]|<\varepsilon\sqrt {n+2\lambda n-2\sqrt n},
\quad
N_n<\lambda n-\sqrt n.
\]
Combine them and notice that $(Y_n)$ is monotone increasing, we have
\[
\mathbb E[\widetilde Y_{n+2\lambda n-2\sqrt n}]+\varepsilon\sqrt {n+2\lambda n-2\sqrt n}>\widetilde Y_{n+2\lambda n-2\sqrt n}\ge \widetilde Y_{n+2N_n}=\widehat{Y}_n,
\]
i.e. for each large $n$,
\begin{align}\label{ev4}
\mathbb P(
\widehat{Y}_n<\mathbb E[\widetilde Y_{n+2\lambda n-2\sqrt n}]+\varepsilon(1+2\lambda)\sqrt n
)>2\sqrt\varepsilon.
\end{align}

Since $(N_n)$ is independent of $(\widehat{Y}_n)$, with probability at least $2\sqrt\varepsilon\cdot 3\sqrt\varepsilon=6\varepsilon$,
\eqref{ev3} and \eqref{ev4} happens simultaneously, i.e.
\[N_n>\lambda n+\sqrt n,\quad
\widehat{Y}_n<\mathbb E[\widetilde Y_{n+2\lambda n-2\sqrt n}]+\varepsilon(1+2\lambda)\sqrt n.
\]
So with probability at least $6\varepsilon$,
\begin{align}\label{ev5}
\widetilde Y_{n+2\lambda n+2\sqrt n}\le \widetilde Y_{n+2N_n}=\widehat{Y}_n<\mathbb E[\widetilde Y_{n+2\lambda n-2\sqrt n}]+\varepsilon(1+2\lambda)\sqrt n.
\end{align}

Now combine \eqref{ev1} and \eqref{ev5}, with probability at least $6\varepsilon-\varepsilon=5\varepsilon$, we have
\[
\mathbb E[\widetilde Y_{n+2\lambda n+2\sqrt n}]-\varepsilon \sqrt {n+2\lambda n+2\sqrt n}<\widetilde Y_{n+2\lambda n+2\sqrt n}<\mathbb E[\widetilde Y_{n+2\lambda n-2\sqrt n}]+\varepsilon (1+2\lambda)\sqrt n.
\]
Then we deduce that 
\begin{align}\label{ev6}
\mathbb E[\widetilde Y_{n+2\lambda n+2\sqrt n}]-\mathbb E[\widetilde Y_{n+2\lambda n-2\sqrt n}]<2\varepsilon (1+2\lambda)\sqrt n.
\end{align}
Now that \eqref{ev6} holds for all large enough $n$, together with monotonicity of $\widetilde Y_n$, we have
\[
\limsup_{n\rightarrow\infty}\frac{\mathbb E[\widetilde Y_n]}{n}\le\varepsilon(1+2\lambda)/2.
\]
Since $\varepsilon$ is arbitrary  and $0\le Y_n- \widetilde Y_n\le 1$,  this implies 
$\limsup_{n\rightarrow\infty}\frac{\mathbb E[Y_n]}{n}=\limsup_{n\rightarrow\infty}\frac{\mathbb E[\widetilde Y_n]}{n}=0$, which is absurd as long as $d\ge 5$ (by \cite{LeGall-Lin-range}).
\end{proof}

\section{Proofs of {\eqref{cond-a}  and  \eqref{cond-b}}{} }\label{s:conda+b}

Let
\[\F_i=\sigma(u_i,u_{i-1},\dots;V(i),V(i-1),\dots).\]
Recall that 
$Y_n=\xi_{1}^\infty+\dots+\xi^\infty_n$
and for every random variable $X$,
\[
\braces X=X-\mathbb E[X].
\]

We prove condition \eqref{cond-a} in the following lemma.

\begin{lemma}\label{lem:condition_1} Let $d>10$. As $n\to\infty$,  we have
\[
\frac 1{\sqrt n}\mathbb E\bracksof*{\braces{Y_n}} {\F_0}\overset{L^1}{\longrightarrow}0.
\]
\end{lemma}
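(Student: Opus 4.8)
\textbf{Proof strategy for Lemma \ref{lem:condition_1}.}

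The plan is to bound $\|\e[\braces{Y_n}\mid\F_0]\|_1$ by a quantity of order $o(\sqrt n)$, using the truncation $\xi_i^k$ together with the conditional independence Lemma \ref{lem:independent_xi}. First I would fix a truncation level $k=k_n$ to be chosen later (a small power of $n$, e.g.\ $k_n=\lfloor n^{\alpha}\rfloor$ for suitable $\alpha\in(0,\frac12)$) and decompose, for each $1\le i\le n$,
\[
\xi_i^\infty = \xi_i^{k} + (\xi_i^\infty-\xi_i^{k}).
\]
For the error part, Lemma \ref{lem:xi_to_infty} gives $\e|\xi_i^\infty-\xi_i^k|\lesssim k^{(4-d)/2}$ uniformly in $i$, so summing over $i\le n$ and using $\|\e[\cdot\mid\F_0]\|_1\le\e|\cdot|$ (conditional Jensen), the total contribution of the error terms to $\|\e[\braces{Y_n}\mid\F_0]\|_1$ is $\lesssim n\, k^{(4-d)/2}$, which is $o(\sqrt n)$ provided $\alpha(d-4)/2>1/2$, i.e.\ we need $k_n$ not too small; since $d>10$ this is easily arranged with $\alpha$ bounded away from $0$.

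The main part is to control $\|\e[\sum_{i=1}^n \braces{\xi_i^{k}}\mid\F_0]\|_1$. Here I would exploit Lemma \ref{lem:independent_xi}: $\xi_i^{k}$ becomes independent of $\F_0$ once $u_i$ is far enough (graph distance $>k$) from the entire past $(u_j)_{j\le 0}$; more precisely, conditioning on $\F_0$ only affects $\xi_i^k$ through the event that $N_k(i)$ reaches into the negative-index region, equivalently that $\d(u_i,u_0)\le 2k$ or more carefully that the $k$-neighborhood of $u_i$ in the contour meets $(-\infty,0]$. On the event that $u_i$ is ``$k$-far'' from the past, $\e[\braces{\xi_i^k}\mid\F_0]=\e[\braces{\xi_i^k}]=0$. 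So it suffices to bound $\e\big[\sum_{i=1}^n \1_{\{u_i \text{ is not $k$-far from the past}\}}\big]$. By translation invariance \eqref{eq:invariance} and Remark \ref{rmk:determine}, the event that $u_i$'s $k$-neighborhood meets the negative indices is contained in $\{\C{i}-2\min_{0\le m\le i}\C{m}\le 2k\}$ (the running minimum of the contour from $0$ to $i$ being within $2k$ of $\C{i}$), whose probability is $\lesssim k^3 i^{-3/2}$ by \eqref{proba:C-2minC}. Summing, $\sum_{i=1}^n k^3 i^{-3/2}\lesssim k^3$. Hence this main contribution is $\lesssim k^3 = n^{3\alpha}$, and I also need $3\alpha<1/2$, i.e.\ $\alpha<1/6$.

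Combining, I want $\alpha$ with $3\alpha<1/2$ and $\alpha(d-4)>1$; the second forces $\alpha>1/(d-4)$, and compatibility with $\alpha<1/6$ requires $1/(d-4)<1/6$, i.e.\ $d>10$, which is exactly the stated hypothesis. Choosing for instance $\alpha$ strictly between $1/(d-4)$ and $1/6$ then gives $\|\e[\braces{Y_n}\mid\F_0]\|_1\lesssim n^{3\alpha}+n^{1-\alpha(d-4)/2}=o(\sqrt n)$, which yields the $L^1$ convergence claimed. The main obstacle I anticipate is making the ``$k$-far from the past'' reduction fully rigorous: one must verify that conditioning on $\F_0$ really does leave $\xi_i^k$ unchanged in law whenever $N_k(i)\cap(-\infty,0]=\varnothing$ and $u_i$'s relevant subtrees lie entirely in the ``future'' part of $\T^\infty$ attached beyond index $0$ — this is where Lemma \ref{lem:independent_xi} (with the roles of past/future swapped, via \eqref{eq:invariance}) is applied, and one has to carefully match the graph-distance threshold $k$ with the contour-excursion bound $2k$ entering \eqref{proba:C-2minC}. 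Getting the bookkeeping on these distances right is the only delicate point; the rest is routine summation.
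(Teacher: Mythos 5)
Your first step (truncating at $k_n=n^\alpha$ with $\alpha>\frac1{d-4}$ and absorbing the error via Lemma \ref{lem:xi_to_infty}) is exactly what the paper does. The gap is in the main term. Your reduction of ``$u_i$ is not $k$-far from the past'' to the contour event $\{\C{i}-2\min_{0\le m\le i}\C{m}\le 2k\}$ is false: since every spine vertex $\varnothing_\ell$ is visited at some nonpositive time, the event $N_k(i)\cap(-\infty,0]\ne\varnothing$ is precisely $\{\d(u_i,\Sp)\le k\}$, and a vertex can be within distance $k$ of a spine vertex $\varnothing_\ell$ with $\ell$ of order $\sqrt i$ while $\d(u_i,u_0)=\d(u_i,\Sp)+\ell$ is huge. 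Quantitatively, in $n$ steps the exploration traverses about $\sqrt n$ subtrees $\T^+_j$, each containing on average about $k_n$ vertices within distance $k_n$ of its root, so
\begin{equation*}
\sum_{i=1}^n\p\big(\d(u_i,\Sp)\le k_n\big)\;\asymp\; k_n\sqrt n,
\end{equation*}
not $O(k_n^3)$. Hence the triangle-inequality bound you propose for the main term is of order $k_n\sqrt n\gg\sqrt n$, and the argument collapses at this point; no choice of $\alpha$ rescues it. (A secondary, repairable, imprecision: the ``far'' event is not $\F_0$-measurable, so one should argue via the constant $\e[\braces{\xi_i^{k_n}}\1_{\{\d(u_i,\Sp)>k_n\}}]$ rather than saying the conditional expectation vanishes on that event; this only costs a factor comparable to $\p(\d(u_i,\Sp)\le k_n)$, so it is not the real issue.)

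The missing idea is cancellation across subtrees, which is how the paper gets below $\sqrt n$. One groups the near-spine contributions by subtree, $Y^{(n)}_j=\sum_{i\le n}\braces{\xi_i^{k_n}\1_{\{u_i\in\T^+_j\}}}$, discards $j>n^{1/2+\delta}$ by the estimate \eqref{eq:sqrt_n}, and observes that $\e[Y^{(n)}_j\mid\F_0]$ is a centered random variable depending only on $\T^-_{j-k_n+1},\dots,\T^-_{j+k_n-1}$, so that these conditional expectations are independent as soon as the spine indices differ by $2k_n$. Splitting $j\le n^{1/2+\delta}$ into blocks of length $2k_n$ and alternating parities, each block has $L^2$-norm $O(k_n^2)$ (because $|\e[Y^{(n)}_j\mid\F_0]|\lesssim k_n$, again by counting near-spine vertices), and summing independent centered blocks in $L^2$ gives an overall bound $O(n^{(1+2\delta+6\alpha)/4})$, which is $o(\sqrt n)$ when $\alpha<\frac16$ and $\delta$ is small. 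This second-moment/block-independence step, absent from your proposal, is what makes the constraint $\frac1{d-4}<\alpha<\frac16$ (hence $d>10$) actually sufficient.
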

\begin{proof}  Let $k_n= n^\alpha$ for $\alpha\in (\frac1{d-4}, \frac12)$. Recall \eqref{xi-ki} for $\xi_i^{k_n}$. By Lemma \ref{lem:xi_to_infty}, 
\begin{equation} \sum_{i=1}^n \e | \xi_i^\infty - \xi_i^{k_n}| \lesssim n ^{1 + (4-d)\alpha/2} = o(n^{1/2}). 
\label{eq:xi_change}
\end{equation} 
So it suffices to show that,
\[
\frac 1{\sqrt n}\sum_{i=1}^n \mathbb E\bracksof*{{\braces{\xi_i^{k_n}}}}{\F_0}\overset{L^1}{\longrightarrow}0.
\]
For each $0\le j\le n$, further denote
\[
Y_j^{(n)}:=\sum_{i=1}^n\braces{\xi_i^{k_n}\1_{\{u_i\in\mathcal T^+_j\}}},
\]
then by definition $\mathbb E[Y_j^{(n)}]=0$, and
$\sum_{i=1}^n \braces{\xi_i^{k_n}}=\sum_{j=0}^n Y_j^{(n)}.$

Let $\delta>0$ be small,  with its value to be determined later. Let $$\ell_n:= \lfloor n^{\frac12+\delta}\rfloor.$$ For $j\ge \ell_n$, observe that $u_0,\dots,u_n$ reaches $\T^+_{j}$ with small probability,
\begin{align}\label{eq:sqrt_n}
\p(\#\T^+_0+\dots+\#\T^+_{j-1}<n)
\lesssim \p(\#\T^+_0<n)^j\lesssim e^{-cn^\delta},
\end{align}

\noindent where $c$ is some positive constant and  the last inequality follows from the fact that $\p(\#\T^+_0 >n) \sim c n^{-1/2}$. Then, 
\[
\sum_{j>n^{1/2+\delta}}\e\absolute*{\e\bracksof{Y^{(n)}_j}{\F_0}}
\lesssim \sum_{i=0}^n\sum_{j>n^{1/2+\delta}}\p(u_i\in\T_j)\lesssim ne^{-cn^\delta},
\]
so
\begin{align*}
\frac 1{\sqrt n}\sum_{j>\ell_n}\e\bracksof{Y_j^{(n)}}{\F_0}
\overset{L^1}{\rightarrow}0.
\end{align*}

Below we prove that
\begin{align}\label{eq:goal_Y}
\frac 1{\sqrt n}\sum_{j=0}^{\ell_n}\e\bracksof{Y_j^{(n)}}{\F_0}
\overset{L^1}{\rightarrow}0.
\end{align}
We regroup \eqref{eq:goal_Y}  into
\begin{align}\label{eq:Ai}
\sum_{j=0}^{\ell_n}\e\bracksof{Y^{(n)}_j}{\F_0}
=\sum_{i=0}^{L_n}\sum_{j=0}^{2k_n-1}\e\bracksof{Y^{(n)}_{2ik_n+j}}{\F_0}=:\sum_{i=0}^{L_n} A_i
=\sum_{i=0, i\, \mbox{\tiny even}}^{L_n} A_i+ \sum_{i=1, i\, \mbox{\tiny odd}}^{L_n} A_i,
\end{align}
where  $L_n:=\frac{\ell_n}{2k_n}$ is treated as integer  for simplicity.

Notice that for any $j\ge k_n$, $\mathbb E\bracksof{Y_j^{(n)}}{\mathcal F_0}$ only depends on $\sigma(\mathcal T^-_{j-k_n+1},\dots,\mathcal T^-_{j+k_n-1}),$
hence when $|j_1-j_2|\ge 2k_n$, $\mathbb E\bracksof{Y_{j_1}^{(n)}}{\mathcal F_0}$ and $\mathbb E\bracksof{Y_{j_2}^{(n)}}{\mathcal F_0}$ are independent.
Therefore,  $A_0,A_2,A_4,\dots$ are independent (and centered), we deduce that
\begin{align}\label{eq:A_i}
\e\bracks*{\pars*{\sum_{i=0, i\, \mbox{\tiny even}}^{L_n} A_i}^2}
=\sum_{i=0, i\, \mbox{\tiny even}}^{L_n} \e\bracks*{A_i^2}.
\end{align}

Arbitrarily fix $j$, notice that  $(\1_{\{u_i\in\mathcal T^+_j\}}, \xi_i^{k_n}\1_{\{\d(u_i,\Sp)>k_n\}})$ is independent of $\F_0$,  where $\Sp=\{\varnothing_0,\varnothing_1,\dots\}$ denotes the spine. We have
\[
\mathbb E\bracksof{\braces{\xi_i^{k_n}\1_{\{\d(u_i,\Sp)>k_n, u_i\in\mathcal T^+_j\}}}}{\F_0}=0,
\]

Therefore, 
\begin{align*}
|\mathbb E\bracksof{Y_j^{(n)}}{\mathcal F_0}|
=&\absolute*{\sum_{i=0}^n\mathbb E\bracksof{\braces{\xi_i^{k_n}\1_{\{\d(u_i,\Sp)\le k_n, u_i\in\mathcal T^+_j\}}}}{\F_0}}\\
\le&\sum_{i=0}^n\pars*{\mathbb E\bracksof{{\xi_i^{k_n}\1_{\{\d(u_i,\Sp)\le k_n, u_i\in\mathcal T^+_j\} }}}{\F_0} 
+ \mathbb E\bracks{\xi_i^{k_n}\1_{\{\d(u_i,\Sp)\le k_n, u_i\in\mathcal T^+_j\}}}} \\
\le&2\sum_{i=0}^n\p\pars{{\d(u_i,\Sp)\le k_n},{u_i\in\mathcal T^+_j}},
\end{align*}

\noindent where, in the last inequality, we use $\xi_i^{k_n}\le 1$ and the independence of $\{\d(u_i,\Sp)\le k_n, u_i\in\mathcal T^+_j\} $ and $\F_0$ to eliminate  the conditional expectation. Then $$ 
|\mathbb E\bracksof{Y_j^{(n)}}{\mathcal F_0}|
\le
  2\e\bracks{\#\setof{i\ge 0}{\d(u_i,\Sp)\le k_n,u_i\in\mathcal T^+_j}}
  \lesssim k_n.
$$

\noindent It follows that 
\[
\e\bracks*{A_i^2}=\e\bracks*{\pars*{\sum_{j=0}^{2k_n-1}\mathbb E\bracksof{Y_{2ik_n+j}^{(n)}}{\F_0}}^2}\lesssim k_n^4
\]
Together with \eqref{eq:A_i}, we have $$\e\Big| \sum_{i=0, i\, \mbox{\tiny even}}^{L_n} A_i\Big| \le \sqrt{\sum_{i=0, i\, \mbox{\tiny even}}^{L_n} \e\bracks*{A_i^2}}
\lesssim \sqrt{L_n k_n^4}=n^{\frac {1+2\delta+6\alpha} 4}.$$

\noindent The same estimate holds for $\sum_{i=1, i\, \mbox{\tiny odd}}^{L_n} A_i$. Hence 
\[
\e\absolute*{\sum_{i=0}^{L_n}A_{i}}\lesssim  n^{\frac {1+2\delta+6\alpha} 4}.
\]
For $d>10$, we may take $\alpha\in(\frac 1{d-4},\frac16)$, and $\delta \in (0, (1-6\alpha)/2)$. Then $\frac {1+2\delta+6\alpha} 4< \frac12$, and we have \eqref{eq:goal_Y}. 
\end{proof}

We now prove condition \eqref{cond-b}. By definition, $\kappa$ is the limit of $\frac{\Var(Y_n)}{n}$, so it suffices to show that
\begin{lemma} \label{Lemma:cond-c} Let $d>12$. We have 
\begin{equation}    
\frac 1 n\pars*{\mathbb E\bracksof*{{\braces{Y_n}^2}}{\F_0}
-\mathbb E\bracks*{{\braces{Y_n}^2}}}\overset{L^1}{\longrightarrow}0. \label{cond-c-1}
\end{equation}
\end{lemma}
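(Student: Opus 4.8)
The plan is to mimic the structure of the proof of Lemma~\ref{lem:condition_1} but now at the level of second moments: replace $\xi_i^\infty$ by its truncated version $\xi_i^{k_n}$ with $k_n=n^\alpha$, group the indices according to which subtree $\T^+_j$ (equivalently $\T^-_j$ after conditioning on $\F_0$) the vertex $u_i$ falls in, and exploit the local independence from Lemma~\ref{lem:independent_xi}. First I would control the truncation error: writing $Y_n^{k_n}:=\sum_{i=1}^n\xi_i^{k_n}$, Lemma~\ref{lem:xi_to_infty} together with $|Y_n^2-(Y_n^{k_n})^2|\le (Y_n+Y_n^{k_n})|Y_n-Y_n^{k_n}|\le 2n|Y_n-Y_n^{k_n}|$ and Cauchy--Schwarz shows that $\frac1n\bigl\|\,\braces{Y_n}^2-\braces{Y_n^{k_n}}^2\,\bigr\|_1=o(1)$ provided $\alpha$ is taken small; so it suffices to prove \eqref{cond-c-1} with $Y_n$ replaced by $Y_n^{k_n}$.

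Next I expand $\braces{Y_n^{k_n}}^2=\sum_{i,i'}\braces{\xi_i^{k_n}}\braces{\xi_{i'}^{k_n}}$ and split the double sum into the ``diagonal band'' $\d(u_i,u_{i'})\le 2k_n$ and its complement. For the off-band part, Lemma~\ref{lem:independent_xi} makes $\xi_i^{k_n}\1_{\{\d(u_i,u_{i'})=m\}}$ (with $m>2k_n$) independent of $\xi_{i'}^{k_n}$, which forces $\e[\braces{\xi_i^{k_n}}\braces{\xi_{i'}^{k_n}}\1_{\{\d(u_i,u_{i'})>2k_n\}}\mid\F_0]$ to collapse; more precisely, following the regrouping in \eqref{eq:Ai}, I would write $\braces{Y_n^{k_n}}=\sum_j Y_j^{(n)}$ with $Y_j^{(n)}:=\sum_{i=1}^n\braces{\xi_i^{k_n}\1_{\{u_i\in\T^+_j\}}}$ and note that $\e[Y_j^{(n)}Y_{j'}^{(n)}\mid\F_0]-\e[Y_j^{(n)}Y_{j'}^{(n)}]$ vanishes whenever $|j-j'|\ge 2k_n$, by the same independence-from-$\F_0$ argument used in Lemma~\ref{lem:condition_1} (a vertex whose $k_n$-neighbourhood avoids the spine contributes a summand independent of $\F_0$, and its expectation cancels). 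So the conditional and unconditional second moments differ only through: (a) the cross terms with $|j-j'|<2k_n$, and (b) the ``close to the spine'' correction terms $\xi_i^{k_n}\1_{\{\d(u_i,\Sp)\le k_n\}}$, exactly as in the previous lemma.

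Then I estimate the remaining terms. For the close-to-spine correction, the bound $|\e[Y_j^{(n)}\mid\F_0]|\lesssim k_n$ from Lemma~\ref{lem:condition_1} (and the analogous unconditional $\e|Y_j^{(n)}|\lesssim k_n$) give, after summing over the $O(\ell_n)$ relevant $j$ and the $O(k_n)$ neighbours $j'$, a contribution $\lesssim \ell_n k_n\cdot k_n$ in $L^1$; dividing by $n$ this is $o(1)$ once $\ell_n=n^{1/2+\delta}$ and $\alpha$ are chosen so that $\tfrac12+\delta+2\alpha<1$. For the genuine band terms $|j-j'|<2k_n$, I would bound $\sum_{j}\sum_{|j'-j|<2k_n}\bigl(\|\e[Y_j^{(n)}Y_{j'}^{(n)}\mid\F_0]\|_1+\e|Y_j^{(n)}Y_{j'}^{(n)}|\bigr)$ using $\|Y_j^{(n)}\|_2\lesssim$ (a moment bound coming from Lemma~\ref{lem:moment_geo} applied to $\#\{i:u_i\in\T^+_j\}$ together with $\xi_i^{k_n}\le1$), so each pair contributes $\lesssim \e[(\#\T^+_j)^2]^{1/2}\cdot(\cdots)$; the point is that $j\le\ell_n$, each $\T^+_j$ visited by $u_{[1,n]}$ has size typically $O(\sqrt n)$, and there are $O(k_n)$ neighbours, giving a total $\lesssim \ell_n\cdot k_n\cdot n = $ something which, divided by $n$, still needs $\alpha$ and $\delta$ small. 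Balancing these exponents is where the dimension constraint $d>12$ enters (through the truncation-error exponent $1+(4-d)\alpha/2$ needing to beat the band-error exponents), and I expect the bookkeeping of these competing powers of $n$ — rather than any single hard estimate — to be the main obstacle. I would also need the tail bound \eqref{eq:sqrt_n} to discard $j>\ell_n$, exactly as before.
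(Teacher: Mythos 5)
There is a genuine gap, and it sits exactly where the paper's proof does its real work. Your central claim — that $\e\bracksof{Y_j^{(n)}Y_{j'}^{(n)}}{\F_0}-\e\bracks{Y_j^{(n)}Y_{j'}^{(n)}}$ vanishes whenever $|j-j'|\ge 2k_n$ — is false. Separation of the windows only tells you that these conditional expectations are \emph{independent random variables} (each depending on its own window $\T^-_{j-k_n+1},\dots,\T^-_{j+k_n-1}$ of the past); it does not make them constant, since each $Y_j^{(n)}$ contains near-spine summands whose conditional law genuinely depends on $\F_0$. So the off-band part does not cancel termwise; at best one can run an $L^2$ block argument on it. But with your grouping by subtree alone that block argument cannot close: $\#\T^+_j$ is heavy-tailed ($\p(\#\T^+_0>n)\sim cn^{-1/2}$, so even its first moment is infinite), hence the norm $\|Y_j^{(n)}\|_2$ you want from ``$\e[(\#\T^+_j)^2]^{1/2}$'' does not exist, and even after the natural truncation at $n$ your own bookkeeping for the band terms gives $\ell_n k_n\cdot n/n=n^{1/2+\delta+\alpha}\to\infty$. (A smaller point with the same flavour: the truncation step needs $\alpha$ \emph{large} enough, namely $n^{1+(4-d)\alpha/2}=o(1)$, i.e.\ $\alpha>2/(d-4)$, not ``$\alpha$ small''.)

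The missing idea is the finer decomposition the paper uses: write $\sum_i\xi_i^{k_n}=\sum_{j\le\ell_n}U_j^{(n)}+W_n$, where $U_j^{(n)}$ keeps only the vertices of $\T^+_j$ with $\d(u_i,\Sp)\le k_n$ and $W_n$ collects everything at distance $>k_n$ from the spine. Then $W_n$ is \emph{exactly} independent of $\F_0$, so the dominant term $\braces{W_n}^2$ cancels identically between conditional and unconditional expectations; the $U$--$U$ part is manageable precisely because $U_j^{(n)}$ has $k_n$-size moments (Lemma~\ref{lem:moment_geo}), and the genuinely hard piece is the cross term $\Xi_j=\e\bracksof{\braces{W_n}\braces{U_j^{(n)}}}{\F_0}$, which the paper controls by a parity/block argument combined with the second-moment bound $\e[\braces{\Xi_j}^2]\lesssim n^{1+\gamma'}$, itself resting on the exponential tail of $\Gamma_j=\#\{i:u_i\in\T^+_j,\ \d(u_i,\Sp)\le k_n\}$. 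It is this cross-term estimate — not a balance between the truncation error and your band error — that produces the constraint $d>12$. Without splitting off the $\F_0$-independent bulk $W_n$ and treating the $W$--$U$ correlation, the argument as you outline it cannot be completed.
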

\begin{proof}

Let $k_n= n^\alpha$ for $\alpha\in (\frac1{d-4}, \frac12)$. By \eqref{eq:xi_change},  $\e \big| Y_n - \sum_{i=0}^n \xi_i^{k_n}\big|= o(1)$. Since  $|Y_n - \sum_{i=0}^n \xi_i^{k_n}|\le n$, we get that 
\[\e \Big[\Big( Y_n - \sum_{i=0}^n \xi_i^{k_n}\Big)^2\Big] = o(n). \]

\noindent Therefore,  the proof of \eqref{cond-c-1} reduces to showing that \begin{equation}    
\frac 1 n\pars*{\mathbb E\bracksof*{{\braces{\sum_{i=0}^n \xi_i^{k_n}}^2}}{\F_0}
-\mathbb E\bracks*{{\braces{\sum_{i=0}^n \xi_i^{k_n}}^2}}}\overset{L^1}{\longrightarrow}0. \label{cond-c-2}
\end{equation}

We next make a further simplification of \eqref{cond-c-2}. Let
\[
U_j^{(n)}:=\sum_{i=0}^n \xi_i^{k_n}\1_{\{u_i\in\mathcal T^+_j,\d(u_i,\Sp)\le k_n\}},
\qquad
W_n:=\sum_{i=0}^n \xi_i^{k_n}\1_{\{\d(u_i,\Sp)>k_n\}}.
\]

We have $\sum_{i=0}^n \xi_i^{k_n}= \sum_{j=0}^n U_j^{(n)} + W_n$.  Let $\delta >0$ be small, and set  as in the proof of Lemma \ref{lem:condition_1}, 
$\ell_n:= \lfloor n^{\frac12+\delta}\rfloor$, 
\begin{equation}  \Upsilon_n:= \sum_{j=0}^{\ell_n}\braces{U_j^{(n)}}+\braces{W_n} .   
\label{eq:Y=U+W} \end{equation} 
By \eqref{eq:sqrt_n}, we see that 
\begin{align}\label{eq:add_l}
\e\Big|\sum_{j=\ell_n+1}^n\braces{U_j^{(n)}}\Big|
\lesssim n^2\p\pars{\#\T^+_0+\dots+\#\T^+_{\ell_n} \le n}\lesssim 1,
\end{align}
so the proof of \eqref{cond-c-2} reduces to showing that 
\begin{equation}   
\frac 1 n \left( \e \Big[\Upsilon_n^2 \, \big|\, \F_0\Big]- 
\e \Big[\Upsilon_n^2 \Big]\right)  \overset{L^1}{\longrightarrow}0.\label{cond-c-3}
\end{equation}

Since $W_n$ is independent of $\F_0$, we have
\[  \mathbb E\bracksof*{{\braces{W_n}^2}}{\mathcal F_0}
-\mathbb E\bracks*{{\braces{W_n}^2}} =0,
\]
it is enough to  show that
\begin{align}\label{eq:first}
\frac{1}{n}\e\Big[\Big(\sum_{j=0}^{\ell_n}\braces{U_j^{(n)}}\Big)^2 \big| \F_0\Big]\overset{L^1}{\longrightarrow}0,
\end{align}
and
\begin{align}\label{eq:second}
\frac{1}{n} \Big(\e\Big(\braces{W_n}\sum_{j=0}^{\ell_n}\braces{U_j^{(n)}} \,\big|\, \F_0\Big)
-\e\Big[\braces{W_n}\sum_{j=0}^{\ell_n}\braces{U_j^{(n)}} \Big] \Big)\overset{L^1}{\longrightarrow}0.
\end{align}

The proof of \eqref{eq:first}  is similar to that of Lemma \ref{lem:condition_1}. In fact, write
\[
\e\Big[\Big(\sum_{j=0}^{\ell_n}\braces{U_j^{(n)}}\Big)^2\big|\F_0\Big]=
\sum_{j,j'=0}^{\ell_n}\mathbb E\bracksof{\braces{U_j^{(n)}}\braces{U_{j'}^{(n)}}}{\F_0}=:\sum_{j,j'=0}^{\ell_n}\Theta_{j,j'}^{(n)}.
\]

Since $0\le \xi^{k_n}_i\le 1$, and $\1_{\{u_i\in\T^+_j,\d(u_i,\Sp)\le k_n\}}$ is independent of $\F_0$, we have for any $i, i'\ge 0$, 
\begin{align*}
&\absolute*{\e\bracksof*{\braces{\xi_i^{k_n}\1_{\{u_i\in\mathcal T^+_j,\d(u_i,\Sp)\le k_n\}}}\braces{\xi_{i'}^{k_n}\1_{\{u_{i'}\in\mathcal T^+_{j'},\d(u_{j'},\Sp)\le k_n\}}}}{\F_0}}\\
\le
  &  \e\bracks*{{\1_{\{u_i\in\mathcal T^+_j,\d(u_i,\Sp)\le k_n\}}}{\1_{\{u_{i'}\in\mathcal T^+_{j'},\d(u_{j'},\Sp)\le k_n\}}}}+
2 \e\bracks*{{\1_{\{u_i\in\mathcal T^+_j,\d(u_i,\Sp)\le k_n\}}}}
\e\bracks*{\1_{\{u_{i'}\in\mathcal T^+_{j'},\d(u_{j'},\Sp)\le k_n\}}}.
\end{align*}

Summing over $0\le i,i'\le n$, we obtain that \begin{align*} 
|\Theta^{(n)}_{j,j'}|
& \le
\e \Big[ \big(\sum_{i=0}^n \1_{\{u_i\in\mathcal T^+_j,\d(u_i,\Sp)\le k_n\}} \big)  \big(\sum_{i=0}^n \1_{\{u_i\in\mathcal T^+_{j'},\d(u_i,\Sp)\le k_n\}} \big) \Big]+  \\
& \qquad 2 \e \Big[  \sum_{i=0}^n \1_{\{u_i\in\mathcal T^+_j,\d(u_i,\Sp)\le k_n\}}  \Big] \e \Big[ \sum_{i=0}^n \1_{\{u_i\in\mathcal T^+_{j'},\d(u_i,\Sp)\le k_n\}}   \Big]. \end{align*}

Since $(\T^+_j, \T^+_{j'})$ have the same law as $(\T^-_j, \T^-_{j'})$, if we continue to use the notation $Z_\ell^{(j)}$ for the population of the $\ell$-th generation of $\T^-_j$, then $$|\Theta^{(n)}_{j,j'}|
  \le
\e \Big[ \big(\sum_{\ell=0}^{k_n} Z_\ell^{(j)}\big)  \big(\sum_{\ell=0}^{k_n} Z_\ell^{(j')}\big) \Big]+   2 (1+k_n)^2. $$

By \eqref{eq:moment_geo1},  we obtain
\begin{equation}   
|\Theta^{(n)}_{j,j'}|\lesssim k_n^2+k_n^3\1_{\{j=j'\}}. \label{Theta-1}
 \end{equation}

Observe that
$\Theta_{j,j'}^{(n)}$ is measurable with respect to $\sigma(\mathcal T^-_{j-k_n+1},\dots,\mathcal T^-_{j+k_n-1},\mathcal T^-_{j'-k_n+1},\dots,\mathcal T^-_{j'+k_n-1})$, we deduce that
$\Theta_{j,j'}^{(n)},\Theta_{\ell,\ell'}^{(n)}$ are independent if $\min(|j-\ell|,|j'-\ell|,|j-\ell'|,|j'-\ell'|)\ge 2k_n$.
As in \eqref{eq:Ai}, set $L_n=\frac{\ell_n}{2k_n}$. We can then divide $\sum_{j,j'=0}^{\ell_n}\Theta_{j,j'}^{(n)}$ into $L_n^2$ blocks of the form
\[
B_{\ell,\ell'}=\sum_{i,i'=0}^{2k_n-1}\Theta_{2\ell k_n+i,2\ell'k_n+i'}^{(n)}, \qquad 0\le \ell, \ell' \le \ell_n-1,
\]
and regroup them by parity of $\ell,\ell'$,
\begin{equation}    
\sum_{j,j'=0}^{\ell_n}\Theta_{j,j'}^{(n)}=\sum_{\ell,\ell'\text{ even}}B_{\ell,\ell'}
+\sum_{\ell,\ell'\text{ odd}}B_{\ell,\ell'}
+\sum_{\ell\text{ odd},\ell'\text{ even}}B_{\ell,\ell'}
+\sum_{\ell\text{ even},\ell'\text{ odd}}B_{\ell,\ell'}, \label{Theta-2}
\end{equation}
where, in the four sums above, $\ell, \ell' $ run from $0$ to $\ell_n-1$ while respecting their respective parities. 
By \eqref{Theta-1}, there is some positive constant $c$ such that uniformly in $\ell, \ell'$, 
$$|B_{\ell, \ell'}| \le c\,  k_n^4.$$

In the sum $\sum_{\ell,\ell'\text{ even}}$, the blocks $(B_{\ell,\ell'})$ are independent, so
\[
\e\absolute*{\sum_{\ell,\ell'\text{ even}}B_{\ell,\ell'}}
\le\sqrt{{\sum_{\ell,\ell'\text{ even}}}\e\bracks{(B_{\ell,\ell'})^2}}
\lesssim L_n k_n^4 
\le n^{\frac12 + \delta + 3 \alpha} .
\]
The other three sums in the right-hand-side of \eqref{Theta-2} can be treated exactly in the same way. Therefore $$ \e \Big| \sum_{j,j'=0}^{\ell_n}\Theta_{j,j'}^{(n)}\Big| \lesssim n^{\frac12 + \delta + 3 \alpha}.$$

Since $\alpha \in (\frac1{d-2}, \frac12)$, if $d>8$, we may take some $\alpha$ sufficiently close to 
$\frac1{d-2}$ and $\delta$ sufficiently small such that $\frac12 + \delta + 3 \alpha < 1$. This yields \eqref{eq:first}.

Finally, we prove \eqref{eq:second}.
Denote
\[
\Xi_j:=\mathbb E\bracksof*{\braces{W_n}\braces{U_j^{(n)}}}{\F_0},
\]
we need to prove that \begin{equation}    \frac{1}{n}\mathbb E\absolute*{\sum_{j=0}^{\ell_n}\braces{\Xi_j}}{\rightarrow}0. \label{eq:second-2} \end{equation}

Observe that
$\Xi_j$ is measurable with respect to $\sigma(\mathcal T^-_{j-k_n+1},\dots,\mathcal T^-_{j+k_n-1})$, so
 $\Xi_i$,$\Xi_j$ are independent when $|i-j|\ge 2k_n$.  Let $L_n=\frac{\ell_n}{2k_n}$. Apply the same trick as in \eqref{eq:Ai} again, we can regroup the sum \begin{equation}    \sum_{j=0}^{\ell_n}\braces{\Xi_j}= \sum_{i=0}^{L_n} \, \Big(\sum_{j=0}^{2k_n-1}\braces{\Xi_{2ik_n+j}}\Big)=: \sum_{i=0}^{L_n} \, D_i.  
 \label{sumDi} \end{equation}

 Assume for the moment that for any $\gamma' > 4\alpha + 2 \delta$, \begin{equation} \max_{0\le j \le \ell_n} \e \big[ \braces{\Xi_j}^2\big] \lesssim n^{ 1+   \gamma'}, \label{maxXi-j}  \end{equation}
 
 \noindent then the triangular inequality for the $L^2$-norm yields that  \begin{equation}    \max_{0\le i \le L_n} \e [D_i^2] \lesssim k_n^2 n^{ 1+    \gamma'}. \label{maxDi-L2} \end{equation}

Note that $D_i$ and $D_{i'}$ are independent if $|i-i'| \ge 2$. By decomposing the sum $\sum_{i=0}^{L_n} \, D_i$ into two parts, according to the parity of the index $i$, we deduce from  \eqref{maxDi-L2} that $$ \e\Big| \sum_{j=0}^{\ell_n}\braces{\Xi_j} \Big| \lesssim \sqrt{L_n k_n^2 n^{ 1+   \gamma'}} \le n^{ \frac34+ \frac{\gamma'}2+\frac\delta2+ \frac\alpha2} . $$

 \noindent Since $\alpha$ can be chosen arbitrarily close to $\frac1{d-2}$, for $d>12$,   we can choose $\alpha < \frac1{10}$, $\delta$ sufficiently small and  $\gamma'> 4\alpha + 2 \delta$     so that $\frac34+ \frac{\gamma'}2+\frac\delta2+ \frac\alpha2< 1$. Hence  \eqref{eq:second-2} holds.

 It remains to show \eqref{maxXi-j}. To this end, we  revert \eqref{eq:Y=U+W} to see that 
 $\braces{W_n}=\braces{\Upsilon_n}-\sum_{j=0}^{\ell_n}\braces{U_j^{(n)}}.$ Then for any $0\le j\le \ell_n$, \begin{align*}
\mathbb E\bracks*{\braces{\Xi_{j}}^2}
\le&\mathbb E\bracks*{\braces{W_n}^2\braces{U_j^{(n)}}^2}\\
\lesssim&\mathbb E\bracks*{\braces{\Upsilon_n}^2\braces{U_j^{(n)}}^2}+\sum_{i_1,i_2\le \ell_n}\mathbb E\bracks*{
\braces{U_{i_1}^{(n)}}\braces{U_{i_2}^{(n)}}\braces{U_j^{(n)}}^2
} .
\end{align*}

Denote 
\[\Gamma_j=\#\setof{i\ge 0}{u_i\in\mathcal T^+_j,\d(u_i,\Sp)\le k_n},
\]
then $|\braces{U_j^{(n)}}|\le \Gamma_j$, and also, it trivially holds that
$|\braces{U_j^{(n)}}|\le n$. 
Therefore, for every $\gamma>0$,
\begin{align*}
\mathbb E\bracks*{\braces{\Upsilon_n}^2\braces{U_j^{(n)}}^2}=&\mathbb E\bracks*{\braces{\Upsilon_n}^2\braces{U_j^{(n)}}^2\1_{\{\Gamma_j>n^\gamma\}}}+\mathbb E\bracks*{\braces{\Upsilon_n}^2\braces{U_j^{(n)}}^2\1_{\{\Gamma_j\le n^\gamma\}}}\\
\le&\mathbb E\bracks*{\braces{\Upsilon_n}^2n^2\1_{\{\Gamma_j>n^\gamma\}}}+\mathbb E\bracks*{\braces{\Upsilon_n}^2n^{2\gamma}\1_{\{\Gamma_j\le n^\gamma\}}}\\
\lesssim &n^4\mathbb P(\Gamma_1>n^\gamma)+n^{2\gamma+1},
\end{align*}
where in the last line we use $|\braces{\Upsilon_n}|\le n$ and $\mathbb E[\braces{\Upsilon_n}^2] \lesssim n$ (by \eqref{eq:add_l} and \eqref{kappaexistspositive}).
Similarly,
\begin{align*}
&\mathbb E\bracks*{
\braces{U_{i_1}^{(n)}}\braces{U_{i_2}^{(n)}}\braces{U_j^{(n)}}^2}
\le\mathbb E\bracks*{
n^4\1_{\{\max(\Gamma_{i_1},\Gamma_{i_2},\Gamma_{j})>n^\gamma\}}}
+n^{4\gamma}
\le 3  n^4\mathbb P(\Gamma_1>n^\gamma)+n^{4\gamma}
\end{align*}

Note that (see for instance \cite[Section I.9]{athreya2012branching}) for some positive constant $c$, 
\[
\p(\Gamma_1>n^\gamma)\lesssim e^{-cn^\gamma/k_n},
\]
the corresponding terms are negligible as long as we take $\gamma>\alpha$, $n^\gamma/k_n=n^{\gamma-\alpha}$, hence uniformly in $j$,
\[
\mathbb E\bracks*{\braces{\Xi_{j}}^2}\lesssim n^{2\gamma+1} + \ell_n^2 n^{4 \gamma} \lesssim n^{ 1+ 2 \delta + 4 \gamma},
\]
yielding \eqref{maxXi-j}. This completes the proof of \eqref{eq:second-2}, and  hence that of Lemma \ref{Lemma:cond-c}.
\end{proof}

\section{Proof of {\eqref{cond-c}}{}: Uniform integrability by fourth moment estimate}\label{s:cond-c}

In this section, we prove that 
\begin{proposition}\label{P:4thmoment} Assume that $d> 16$. We have that for all $n\ge 1$, 
\[
\mathbb E[\braces{Y_n}^4]\lesssim n^2.
\]
In particular, this implies that $(\frac{\braces{Y_n}^2}{n})_{n\ge 1} $ is uniformly integrable.
\end{proposition}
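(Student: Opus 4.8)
The plan is to expand $\braces{Y_n}^4$ as a quadruple sum over centered summands and control it via the truncation $\xi_i^{k_n}$ used throughout the paper. First I would replace $Y_n$ by $\sum_{i=1}^n\xi_i^{k_n}$ with $k_n=n^\alpha$ for suitable small $\alpha$: by Lemma \ref{lem:xi_to_infty} and \eqref{eq:xi_change} the $L^1$-error in the summands is $o(n^{1/2})$, and since $|Y_n-\sum\xi_i^{k_n}|\le n$ one gets $\e[(Y_n-\sum_i\xi_i^{k_n})^4]\lesssim n^3\cdot o(n^{1/2})=o(n^{7/2})$, which is negligible compared with $n^2$ only if one is more careful — so in fact I would iterate the truncation error bound through a Minkowski/$L^4$ triangle inequality, writing $\|\braces{Y_n}\|_4\le\|\braces{\sum\xi_i^{k_n}}\|_4+\|\sum(\xi_i^\infty-\xi_i^{k_n})\|_4$ and bounding the second term crudely by $n$ times an $L^1\!-\!L^\infty$ interpolation of the per-term error, choosing $\alpha$ large enough (but $<1/2$) that this contributes $o(n^{1/2})$ to the $L^4$ norm. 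Then it suffices to prove $\e[\braces{\sum_{i=1}^n\xi_i^{k_n}}^4]\lesssim n^2$.

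Next I would decompose $\sum_{i=1}^n\braces{\xi_i^{k_n}}=\sum_{j\ge0}\braces{U_j^{(n)}}$ over the subtrees $\T_j^+$ as in Lemma \ref{Lemma:cond-c}, truncating the range of $j$ to $j\le\ell_n=\lfloor n^{1/2+\delta}\rfloor$ at negligible cost by \eqref{eq:sqrt_n}. The key structural input is locality: $\braces{U_j^{(n)}}$ depends only on $\T_{j-k_n+1}^-,\dots,\T_{j+k_n-1}^-$ (and the relevant spatial increments), so two such terms are independent once their indices differ by at least $2k_n$, and more generally a product $\e\big[\prod_{a=1}^4\braces{U_{j_a}^{(n)}}\big]$ vanishes whenever one index is separated from the other three by more than $2k_n$. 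Hence in the expansion of the fourth power only clustered quadruples survive: either all four indices lie within $O(k_n)$ of each other, or they split into two pairs each of diameter $O(k_n)$ but with the pairs far apart. For the two-pairs configuration I would use independence to factor the expectation into $\e[\braces{U_{j_1}^{(n)}}\braces{U_{j_2}^{(n)}}]\,\e[\braces{U_{j_3}^{(n)}}\braces{U_{j_4}^{(n)}}]$ and then recognize each factor as essentially a covariance already controlled (via Lemma \ref{lem:cov}, Lemma \ref{l:cov2} and the moment bounds of Lemma \ref{lem:moment_geo}); summing over the $O(\ell_n\cdot k_n)$ choices within each pair and $O(\ell_n)$ choices of pair separation should give $\lesssim n^2$. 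For the fully-clustered configuration I would bound $|\braces{U_j^{(n)}}|$ by $\Gamma_j=\#\{i:u_i\in\T_j^+,\d(u_i,\Sp)\le k_n\}$, use Hölder to reduce to $\e[\Gamma_j^4]\lesssim k_n^8$ from \eqref{eq:moment_geo1}, and sum over $O(\ell_n k_n^3)$ clustered quadruples, giving $\lesssim \ell_n k_n^3\cdot k_n^8=n^{1/2+\delta+11\alpha}$, which is $o(n^2)$ for $\alpha$ small.

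The main obstacle, and where the dimension restriction $d>16$ enters, is controlling the genuinely ``tangled'' fourth-order interaction where the four index-clusters are pairwise at comparable distances (not cleanly split into two far-apart pairs): here one cannot factor, and one must estimate the probability that four independent pieces of BRW — each of Hausdorff dimension $4$ — simultaneously meet. The heuristic in Remark \ref{rmk1.2}(ii) is exactly this: four mutually intersecting ranges of dimension $4$ require $d>16$. Concretely I would condition on the contour process to control the graph distances between the four relevant root-vertices $\varnothing_{j_a}$ along the spine, then use Green's function estimates $\p(S_m=0)\lesssim m^{-d/2}$ together with the subtree population bounds $\e[Z_\ell^p]\lesssim_p\ell^{p-1}$ to bound the probability of a common intersection point by a product of four factors of the form $\sum_\ell Z_\ell\,\d(\cdot)^{-d/2}$; summing these over the spine positions against the contour distribution \eqref{esp:C-2minC} is what forces $d/2>8$. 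I would organize this so that the worst case (all four clusters at mutual distance of order $r$, integrated over $r$) yields a convergent sum precisely when $d>16$, and track that every other configuration is strictly better.
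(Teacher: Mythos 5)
There is a genuine gap, and it is quantitative: the two requirements you place on the fixed truncation scale $k_n=n^{\alpha}$ are incompatible in exactly the range of dimensions the proposition concerns, and you never check them against each other (you ask for ``$\alpha$ large enough'' in step one and ``$\alpha$ small'' in the cluster estimates). Your $L^1$--$L^\infty$ interpolation gives, via Lemma \ref{lem:xi_to_infty}, $\e\big[\big(\sum_{i\le n}(\xi_i^{k_n}-\xi_i^\infty)\big)^4\big]\le n^3\sum_{i\le n}\e[\xi_i^{k_n}-\xi_i^\infty]\lesssim n^4k_n^{(4-d)/2}$, so making this $\lesssim n^2$ forces $\alpha\ge 4/(d-4)$, which tends to $1/3$ as $d\downarrow 16$. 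On the other hand every block estimate you propose degrades polynomially in $k_n$: by \eqref{eta-pmoment} one has $\e[\Gamma_j^2]\lesssim k_n^3$ and $\e[\Gamma_j^4]\lesssim k_n^7$, so your fully clustered term is $\lesssim \ell_n k_n^3\cdot k_n^{7}$ (you wrote $k_n^{8}$), requiring $\alpha<3/22$ (at best $3/20$), and the two-pair term, once each covariance factor is bounded in absolute value as you suggest, is $\lesssim(\ell_nk_n^3)^2=n^{1+2\delta+6\alpha}$ (note also the number of two-pair configurations is $O(\ell_n^2k_n^2)$, not ``$O(\ell_nk_n)$ per pair times $O(\ell_n)$ separations''), requiring $\alpha\le 1/6$. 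So the admissible window for $\alpha$ is empty unless roughly $d>28$ (with your own constants, $d>33$); nothing in the scheme produces the threshold $16$. This cannot be fixed by sharper bookkeeping of the same kind: since $\p(\Gamma_j\gtrsim k_n^2)\asymp k_n^{-1}$, one actually has $\sum_{j\le\ell_n}\e[\braces{U_j^{(n)}}^4]\gtrsim \ell_n k_n^{7}\gg n^2$ once $\alpha>3/14$, so for $d$ near $16$ the quadruple expansion is of order $n^2$ only thanks to cancellations (a huge early subtree suppresses later ones through the cap $i\le n$), which absolute-value estimates on clustered configurations necessarily discard. Finally, your last paragraph does not correspond to any term of your own decomposition: once no index is $2k_n$-isolated, the only surviving configurations are a single $4$-cluster or two far $2$-clusters, so there is no separate ``tangled'' case, and the Green's-function heuristic is never turned into an estimate --- i.e.\ the point where $d>16$ should enter your argument is never identified.

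For comparison, the paper does not use a fixed truncation at all. It truncates each $\xi_i^\infty$ at the random scale $\alpha_i=\lfloor\d(u_i,u_0)/2\rfloor$, proves the key $L^2$ bound $\e\big[\big(\sum_i(\xi_i^{\alpha_i}-\xi_i^\infty)\big)^2\big]<\infty$ by a bootstrap in the truncation scale (Lemma \ref{lem:xi-alpha}; this is precisely where $d>16$ enters, through the requirement $\gamma\in(6,\frac{d-4}{2})$), and then derives the self-referential inequality $J_n\lesssim n+\sqrt{\Theta_n}$ (Lemma \ref{lem:main4th}), from which $\Theta_n\le c\sum_{i\le n}\sqrt{\Theta_i}+cn^2$ yields $\Theta_n\lesssim n^2$. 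To salvage your route you would need either a fourth-moment analogue of that bootstrap for the truncation error (so that $\alpha$ can be taken small), or an argument that genuinely exploits the cancellation described above; as written, the proposal does not prove the statement for $16<d\lesssim 28$.
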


Let
\begin{equation}   \Theta_n:=\e [ \langle Y_n\rangle^4] = \sum_{1\le \a, \b,\c,\t\le n}\mathbb E[\braces{\xi_\a^\infty}\braces{\xi_\b^\infty}\braces{\xi_\c^\infty}\braces{\xi_\t^\infty}]. \label{def-Theta_n}
\end{equation}

By symmetry, the sum  $\sum_{\a, \b, \c, \t}$ on the right-hand-side of   \eqref{def-Theta_n}   equals 
$$\sum_{\a, \b, \c, \t}=4  \sum_{1\le \a < \b\wedge \c \wedge \t} + 6 \sum_{1\le \a=\b < \c\wedge \t} +  4 \sum_{\a=\b=\c < \t} + \sum_{\a=\b=\c=\t},$$
where in the above four sums, the indices $\a, \b, \c, \t$ run from $1$ to $n$  and   $i\wedge j:= \min(i, j)$ for any $i, j\in \z$. 

Using $|\braces{\xi_i^\infty}|\le 1 $ for any $i$, we see that $|\sum_{\a=\b=\c < \t} \mathbb E[\braces{\xi_\a^\infty}\braces{\xi_\b^\infty}\braces{\xi_\c^\infty}\braces{\xi_\t^\infty}]|\lesssim n^2$ and $|\sum_{\a=\b=\c=\t} \mathbb E[\braces{\xi_\a^\infty}\braces{\xi_\b^\infty}\braces{\xi_\c^\infty}\braces{\xi_\t^\infty}]|\lesssim n$. Furthermore, applying the inequality $|\braces{\xi_\c^\infty}\braces{\xi_\t^\infty}| \le \frac12 [(\braces{\xi_\c^\infty})^2+ (\braces{\xi_\t^\infty})^2]$, we obtain that   $$\big|\sum_{1\le \a=\b < \c\wedge \t} \mathbb E[\braces{\xi_\a^\infty}\braces{\xi_\b^\infty}\braces{\xi_\c^\infty}\braces{\xi_\t^\infty}] \big| 
\le   \sum_{\a, \c=1}^n \e [\braces{\xi_\a^\infty}^2 \braces{\xi_\c^\infty}^2]
= \e \bracks*{\left(\sum_{\a=1}^n\braces{\xi_\a^\infty}^2\right)^2} \le n^2.$$
By translational invariance (see \eqref{eq:invariance}), $$\sum_{1\le \a < \b\wedge \c \wedge \t} \mathbb E[\braces{\xi_\a^\infty}\braces{\xi_\b^\infty}\braces{\xi_\c^\infty}\braces{\xi_\t^\infty}] 
= \sum_{\a=1}^{n-1} \sum_{\b, \c, \t=1}^{n-a}  \mathbb E[\braces{\xi_0^\infty}\braces{\xi_\b^\infty}\braces{\xi_\c^\infty}\braces{\xi_\t^\infty}] 
=: \sum_{\a=1}^{n-1} J_{n-\a}.$$

Consequently, we have that  for all $n\ge 1$,  \begin{equation} \Theta_n - 6 \sum_{i=1}^{n-1} J_i \lesssim n^2. \label{Theta<sumJ}  \end{equation}
The main technical estimate in the proof of Proposition \ref{P:4thmoment} is the following lemma:

\begin{lemma}  \label{lem:main4th}  Let $d >16$. We have for all $n\ge 1$, $$ J_n  \lesssim \,  n + \sqrt{\Theta_n}.$$ 

\end{lemma}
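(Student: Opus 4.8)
The quantity to estimate is
\[
J_n=\sum_{\b,\c,\t=1}^{n}\e\big[\braces{\xi_0^\infty}\braces{\xi_\b^\infty}\braces{\xi_\c^\infty}\braces{\xi_\t^\infty}\big],
\]
(up to adjusting the upper limits, which is harmless). The whole point is that the four increments $\braces{\xi_0^\infty},\braces{\xi_\b^\infty},\braces{\xi_\c^\infty},\braces{\xi_\t^\infty}$ become nearly independent whenever the corresponding vertices $u_0,u_\b,u_\c,u_\t$ are pairwise far in the graph distance $\d$, in which case the expectation almost factorizes and, by centering, almost vanishes. So the strategy is: (1) replace each $\xi_i^\infty$ by its truncated version $\xi_i^{k}$ with $k=k_i$ of order (half) the graph distance from $u_i$ to the nearest of the other three exploration points, paying the $L^1$-error from Lemma~\ref{lem:xi_to_infty}; (2) use the independence Lemma~\ref{lem:independent_xi} to kill the main term once the truncation radii are smaller than the mutual distances; (3) collect the error terms and bound their sum by $n+\sqrt{\Theta_n}$ using the contour estimates of Lemma~\ref{lem:estimate-contour} and the Cauchy–Schwarz/self-bounding trick that already produced \eqref{Theta<sumJ}.

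**Key steps.** First I would order $0,\b,\c,\t$ and, for the generic index, say $\c$, let $2\alpha_\c$ be the graph distance from $u_\c$ to the closest of $u_0,u_\b,u_\t$ that lies \emph{before} $u_\c$ in the exploration (with the convention that when $\c$ is the largest index this closest point is among $\{0,\b,\t\}$); by \eqref{eq:determine} this distance is a functional of the contour $C$. Writing $\xi_\c^\infty=\xi_\c^{\alpha_\c}+(\xi_\c^\infty-\xi_\c^{\alpha_\c})$ and similarly for the other three indices, expand the product of four such binomials. The ``fully truncated'' term $\e[\braces{\xi_0^{\alpha_0}}\braces{\xi_\b^{\alpha_\b}}\braces{\xi_\c^{\alpha_\c}}\braces{\xi_\t^{\alpha_\t}}]$ vanishes: conditioning successively and using Lemma~\ref{lem:independent_xi} (exactly as in the proof of Lemma~\ref{l:cov2}, where $\Cov(\xi_0^\infty,\xi_n^{\alpha_n})$ was rewritten with $\xi_0^\infty-\xi_0^k$), each truncated factor at the largest remaining index is independent of the $\sigma$-algebra generated by the others once its radius is below the separating distance, so after centering the conditional expectation is $0$. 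Every remaining term contains at least one error factor $\xi_i^\infty-\xi_i^{\alpha_i}$; bounding the other three factors by $1$ in absolute value, such a term is at most $\e[\,|\xi_i^\infty-\xi_i^{\alpha_i}|\,]$ after conditioning on the contour, which by the union-bound computation in Lemma~\ref{l:cov2} (the display leading to \eqref{eq:xiinfty-k}) is of order $\alpha_i^{2-d/2}$ conditionally on $C$.

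**Summing the errors.** What remains is to sum these error bounds over $\b,\c,\t\in[1,n]$. The generic sum is
\[
\sum_{\b,\c,\t}\e\big[(1+\alpha_\c)^{(4-d)/2}\big]\qquad\text{(and its analogues),}
\]
where $\alpha_\c$ is half the distance from $u_\c$ to the nearest previously-visited point among the other three. Here one splits according to which of $0,\b,\t$ realizes the minimum and uses that, conditionally on the relative order, the distance $\d(u_\c,u_{\text{nearest}})$ is, via \eqref{eq:determine}, a ``$C_m-2\min C$'' type quantity for $m$ equal to the gap between the two indices; Lemma~\ref{lem:estimate-contour} then gives $\e[(1+\alpha_\c)^{(4-d)/2}]\lesssim m^{-3/2}$ when $d>10$. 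One free index (the one adjacent to $\c$ in the order) is absorbed by $\sum_m m^{-3/2}<\infty$, and the remaining two free indices each cost a factor $n$, giving $n^2$ — which is too big by one factor of $n$. The remedy, and the genuinely delicate point, is not to bound all three non-error factors by $1$: one keeps one of them, say $\braces{\xi_\t^\infty}$, squared and applies Cauchy–Schwarz, so that the error sum is controlled by $\big(\sum n^{-3/2}\text{-type terms}\big)^{1/2}\cdot\big(\sum_{\b,\c,\t}\e[\braces{\xi_0^\infty}^2\braces{\xi_\b^\infty}^2\cdots]\big)^{1/2}$, and the second factor is $O(\sqrt{\Theta_n})$ (or directly $O(n)$ when only two indices survive the squaring). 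Carefully bookkeeping which index is squared in each of the finitely many terms of the expansion, and checking that the geometric/combinatorial sums converge exactly when $d>16$, is the main obstacle; the dimension threshold $d>16$ enters precisely because here the worst term effectively needs four mutually-avoiding four-dimensional ranges, i.e. $(4-d)/2$ summed four times against three-dimensional contour fluctuations must still be summable.
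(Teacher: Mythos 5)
Your overall architecture (truncate each $\xi^\infty$ at a radius comparable to a graph distance, use Lemma \ref{lem:independent_xi} to kill the main term, then control the error terms by Cauchy--Schwarz against $\sqrt{\Theta_n}$) is the right shape, but two of your central claims do not hold as stated. First, the assertion that the ``fully truncated'' term $\e[\braces{\xi_0^{\alpha_0}}\braces{\xi_\b^{\alpha_\b}}\braces{\xi_\c^{\alpha_\c}}\braces{\xi_\t^{\alpha_\t}}]$ vanishes by conditioning successively is unjustified, and in fact false in the form you use it. The brackets $\braces{\cdot}$ are centered by \emph{unconditional} expectations, while your truncation radii are random and depend on the mutual positions of the four points; given $\alpha_\t=k$, the conditional mean of $\xi_\t^{\alpha_\t}$ is (essentially) $\e[\xi_\t^{k}]$, which differs from $\e[\xi_\t^{\alpha_\t}]$, so the conditional expectation of the centered factor at the ``largest remaining index'' is not $0$. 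Moreover Lemma \ref{lem:independent_xi} gives \emph{unconditional} independence of the truncated variable at the \emph{smallest} index from truncated variables at later, far-away indices; it does not provide the conditional independence your successive-conditioning scheme needs, nor does it apply with your pairwise radii (the paper's $\alpha_i=\lfloor \d(u_i,u_0)/2\rfloor$ are all distances to $u_0$, which is what makes the events $\{\alpha_\a=k\}$ compatible with the $\sigma$-algebra in \eqref{eq:sigma2}). In the paper the exact cancellation is extracted only for the single factor at index $0$, truncated at $k_1\wedge k_2\wedge k_3$, and what remains is a genuine, nonzero remainder ($\xifour_4$ in \eqref{def-I4}) whose estimation is a substantial part of the proof (Lemma \ref{lem:I4}); your scheme has no counterpart for it.

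Second, your treatment of the error terms is not strong enough to close the self-bounding inequality. After Cauchy--Schwarz against $\braces{Y_n}^2$, what is required is a bound on $\bigl\|\sum_{i}(\xi_i^{\alpha_i}-\xi_i^\infty)\bigr\|_2$ that is \emph{uniform in $n$} (this is \eqref{eq:xi-alphaL2}); first-moment estimates of the type $\e[(1+\alpha_i)^{(4-d)/2}]\lesssim m^{-3/2}$, conditionally on the contour, do not give this because of the cross terms $\e[(\xi_i^\infty-\xi_i^{\alpha_i})(\xi_j^\infty-\xi_j^{\alpha_j})]$, and applying Cauchy--Schwarz termwise instead produces an extra polynomial factor (roughly $\sum_\c\e[|\xi_\c^\infty-\xi_\c^{\alpha_\c}|]^{1/2}$, which grows with $n$), after which $J_n\lesssim n+\sqrt{\Theta_n}$, and hence $\Theta_n\lesssim n^2$, can no longer be deduced. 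The paper obtains the needed $L^2$ bound in Lemma \ref{lem:xi-alpha} by a nontrivial bootstrap: the quantity $\Gamma_m$ of \eqref{def-Gamma-mk} satisfies $\Gamma_m\le \lambda m^2\sqrt{\Gamma_{2m}}$, which is iterated along dyadic scales; this is precisely where the hypothesis $d>16$ enters (one needs $\gamma>6$ with $\gamma<\frac{d-4}{2}$), not through the heuristic count of mutually avoiding ranges you invoke. Without an argument for this uniform second-moment bound, and without a treatment of the non-vanishing fully truncated remainder, the proposal has genuine gaps at the two places where the paper's proof does its real work.
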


Assume for the moment Lemma \ref{lem:main4th}, we can give the proof of Proposition \ref{P:4thmoment}:

\begin{proof}[Proof of Proposition \ref{P:4thmoment}.] By \eqref{Theta<sumJ}  and Lemma \ref{lem:main4th}, there is a constant $c>1$ such that for all $n\ge 1$, 
$$ \Theta_n \le c \sum_{i=1}^n \sqrt{\Theta_i}+ c n^2.$$ Let $x_n:=\max_{1\le i \le n} \Theta_i$. Then the above inequality implies that $x_n \le c \sum_{i=1}^n \sqrt{x_i}+ c n^2 \le c n \sqrt{x_n} + c n^2$. 
This implies $x_n \le 4 c^2 n^2,$ showing Proposition \ref{P:4thmoment}.  \end{proof}

The rest of this section is to show Lemma \ref{lem:main4th}. 
Denote $\alpha_n:=\lfloor\frac{\d(u_n,u_0)}2\rfloor$, and we divide $J_n$ into several parts. Let  \begin{align}
\xifour_1&:= \sum_{\a,\b,\c=1}^{n} \e \left[\braces{\xi_0^\infty} \braces{\xi_\a^\infty - \xi_\a^{\alpha_\a} }\braces{\xi_\b^\infty} \braces{\xi_\c^\infty} \right] ,  \label{def-I1}\\
\xifour_2&:=\sum_{\a,\b,\c=1}^{n} \e \left[\braces{\xi_0^\infty} \braces{\xi_\a^{\alpha_\a} }\braces{\xi_\b^\infty-\xi_\b^{\alpha_\b}} \braces{\xi_\c^\infty} \right],\label{def-I2} \\
\xifour_3&:=\sum_{\a,\b,\c=1}^{n} \e \left[\braces{\xi_0^\infty} \braces{\xi_\a^{\alpha_\a} }\braces{\xi_\b^{\alpha_\b}} \braces{\xi_\c^\infty-\xi_\c^{\alpha_\c}} \right], \label{def-I3}
\\
\xifour_4&:=\sum_{\a,\b,\c=1}^{n}\sum_{k_1,k_2,k_3=0}^\infty \e \left[\braces{\xi_0^\infty- \xi_0^{k_1\wedge k_2\wedge k_3}} \braces{\xi_\a^{k_1} \1_{\{\alpha_\a=k_1\}}}\braces{\xi_\b^{k_2} \1_{\{\alpha_\b=k_2\}}} \braces{\xi_\c^{k_3} \1_{\{\alpha_\c=k_3\}}} \right]. \label{def-I4}
\end{align}

\begin{lemma}\label{lem:I0} We have
\[
J_n= \xifour_1+\xifour_2+\xifour_3+\xifour_4 .
\]
\end{lemma}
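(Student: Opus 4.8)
The claim is the algebraic identity

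\begin{equation*}
J_n= \xifour_1+\xifour_2+\xifour_3+\xifour_4,
\end{equation*}

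and the plan is simply to verify it by a telescoping decomposition of the four factors appearing in the definition of $J_n$. Recall that
\[
J_n=\sum_{\a,\b,\c=1}^{n}\e\bigl[\braces{\xi_0^\infty}\braces{\xi_\a^\infty}\braces{\xi_\b^\infty}\braces{\xi_\c^\infty}\bigr],
\]
and that $\alpha_m=\lfloor \d(u_m,u_0)/2\rfloor$ depends only on $\mathcal T^+$ (through the contour process on the positive side), so that $\alpha_\a,\alpha_\b,\alpha_\c$ are well-defined random indices. The first step is to write, for each of the three indices $\a,\b,\c$, the trivial splitting
\[
\braces{\xi_m^\infty}=\braces{\xi_m^\infty-\xi_m^{\alpha_m}}+\braces{\xi_m^{\alpha_m}},
\]
and to expand the product $\braces{\xi_\a^\infty}\braces{\xi_\b^\infty}\braces{\xi_\c^\infty}$ along these three binary choices, keeping $\braces{\xi_0^\infty}$ untouched for the moment. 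This produces $2^3=8$ terms, which I would group by ``the first index (in the order $\a,\b,\c$) at which the $\xi^\infty-\xi^{\alpha}$ piece is taken''. The terms where that first occurrence is at $\a$ (regardless of what happens at $\b,\c$, which then carry the full $\xi^\infty$) sum exactly to $\xifour_1$; those where it is first at $\b$ (so $\a$ carries $\xi^{\alpha_\a}$ and $\c$ carries $\xi^\infty$) sum to $\xifour_2$; those where it is first at $\c$ (so $\a,\b$ carry $\xi^{\alpha}$'s and $\c$ the full $\xi^\infty$) sum to $\xifour_3$; and the single remaining term, in which all three of $\a,\b,\c$ carry the truncated $\xi^{\alpha}$, is
\[
\sum_{\a,\b,\c=1}^n \e\bigl[\braces{\xi_0^\infty}\braces{\xi_\a^{\alpha_\a}}\braces{\xi_\b^{\alpha_\b}}\braces{\xi_\c^{\alpha_\c}}\bigr].
\]

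It remains to identify this last term with $\xifour_4$. Here the only thing to check is the bookkeeping between the random truncation levels and the sum over fixed levels $k_1,k_2,k_3$: since $\{\alpha_\a=k_1\}$, $\{\alpha_\b=k_2\}$, $\{\alpha_\c=k_3\}$ partition the probability space as $k_1,k_2,k_3$ range over $\z_+$, we may insert $\sum_{k_1,k_2,k_3\ge 0}\1_{\{\alpha_\a=k_1,\alpha_\b=k_2,\alpha_\c=k_3\}}$, which turns $\xi_\a^{\alpha_\a}$ into $\xi_\a^{k_1}$ on the event $\{\alpha_\a=k_1\}$ and likewise for $\b,\c$; and on this same event $\alpha_\a\wedge\alpha_\b\wedge\alpha_\c=k_1\wedge k_2\wedge k_3$, so $\xi_0^{\alpha_\a\wedge\alpha_\b\wedge\alpha_\c}=\xi_0^{k_1\wedge k_2\wedge k_3}$. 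Thus the last term equals
\[
\sum_{\a,\b,\c=1}^n\sum_{k_1,k_2,k_3\ge 0}\e\bigl[\braces{\xi_0^{\alpha_\a\wedge\alpha_\b\wedge\alpha_\c}}\braces{\xi_\a^{k_1}\1_{\{\alpha_\a=k_1\}}}\braces{\xi_\b^{k_2}\1_{\{\alpha_\b=k_2\}}}\braces{\xi_\c^{k_3}\1_{\{\alpha_\c=k_3\}}}\bigr];
\]
comparing with \eqref{def-I4}, the difference is precisely the replacement of $\braces{\xi_0^{\alpha_\a\wedge\alpha_\b\wedge\alpha_\c}}$ by $\braces{\xi_0^\infty-\xi_0^{k_1\wedge k_2\wedge k_3}}$ on the event $\{\alpha_\a=k_1,\alpha_\b=k_2,\alpha_\c=k_3\}$. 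These two are not equal term by term, but their difference, namely $\braces{\xi_0^\infty}$ times the product, summed over $k_1,k_2,k_3$, reassembles (using again that the events partition) to $\sum_{\a,\b,\c}\e[\braces{\xi_0^\infty}\braces{\xi_\a^{\alpha_\a}}\braces{\xi_\b^{\alpha_\b}}\braces{\xi_\c^{\alpha_\c}}]$ — which is exactly the last term we started from. So the cleanest way to organize the argument is: expand $\braces{\xi_0^\infty}=\braces{\xi_0^\infty-\xi_0^{k_1\wedge k_2\wedge k_3}}+\braces{\xi_0^{k_1\wedge k_2\wedge k_3}}$ inside this partitioned sum, recognize one half as $\xifour_4$ and the other half as $\sum_{\a,\b,\c}\e[\braces{\xi_0^{\alpha_\a\wedge\alpha_\b\wedge\alpha_\c}}\braces{\xi_\a^{\alpha_\a}}\braces{\xi_\b^{\alpha_\b}}\braces{\xi_\c^{\alpha_\c}}]$, and then argue that this latter sum vanishes.

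That vanishing is the only substantive point, and it is exactly where Lemma~\ref{lem:independent_xi} enters: conditionally on $\mathcal T^+$ (equivalently, with $\alpha_\a,\alpha_\b,\alpha_\c$ and hence $m:=\alpha_\a\wedge\alpha_\b\wedge\alpha_\c$ fixed), the variable $\xi_0^{m}$ is, by Lemma~\ref{lem:independent_xi} applied with $k_1=m$, independent of the triple $(\xi_\a^{\alpha_\a},\xi_\b^{\alpha_\b},\xi_\c^{\alpha_\c})$ — indeed each $\xi^{\alpha}_\bullet$ only sees vertices within graph distance $\alpha_\bullet\ge m$ of $u_\bullet$, and $\d(u_0,u_\bullet)\ge 2\alpha_\bullet\ge \alpha_\bullet + m$, so the locality hypothesis $\d(u_0,u_\bullet)\ge m+\alpha_\bullet$ is met. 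Therefore $\e[\braces{\xi_0^{m}}\,\cdot\,|\,\mathcal T^+]=\braces{\xi_0^m}$'s conditional expectation factors and, since $\e[\braces{\xi_0^m}\mid\mathcal T^+]=\e[\braces{\xi_0^m}]=0$ (by \eqref{eq:invariance}, $\xi_0^m$ is independent of $\mathcal T^+$ altogether), each summand is zero. Summing over $\a,\b,\c$ and $k_1,k_2,k_3$ gives the claimed identity. I do not anticipate a genuine obstacle here — it is a purely combinatorial/measurability verification — the only care needed is to state precisely, when invoking Lemma~\ref{lem:independent_xi}, that $\d(u_0,u_\bullet)=2\alpha_\bullet$ or $2\alpha_\bullet+1$ so that the distance condition $m\ge k_1+k_2$ of that lemma (with the roles $k_1\leftrightarrow m$, $k_2\leftrightarrow\alpha_\bullet$, and $m\leftrightarrow\d(u_0,u_\bullet)$) is indeed satisfied.
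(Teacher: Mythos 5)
Your proof is correct and follows essentially the same route as the paper's: telescope $\braces{\xi_\a^\infty}\braces{\xi_\b^\infty}\braces{\xi_\c^\infty}$ to peel off $\xifour_1,\xifour_2,\xifour_3$, partition the all-truncated remainder over the values of $\alpha_\a,\alpha_\b,\alpha_\c$, split $\braces{\xi_0^\infty}=\braces{\xi_0^\infty-\xi_0^{k_1\wedge k_2\wedge k_3}}+\braces{\xi_0^{k_1\wedge k_2\wedge k_3}}$ to recognize $\xifour_4$, and kill the leftover term by term via Lemma~\ref{lem:independent_xi} together with $\e[\braces{\xi_0^{k_1\wedge k_2\wedge k_3}}]=0$. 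The only blemishes are notational: in your intermediate display $\braces{\xi_0^{\alpha_\a\wedge\alpha_\b\wedge\alpha_\c}}$ should read $\braces{\xi_0^\infty}$, and the leftover piece should be kept as the sum over fixed $(k_1,k_2,k_3)$ of $\e[\braces{\xi_0^{k_1\wedge k_2\wedge k_3}}\braces{\xi_\a^{k_1}\1_{\{\alpha_\a=k_1\}}}\braces{\xi_\b^{k_2}\1_{\{\alpha_\b=k_2\}}}\braces{\xi_\c^{k_3}\1_{\{\alpha_\c=k_3\}}}]$ rather than recombined into a single random-index expectation (the centerings do not reassemble), but your vanishing argument is in any case applied at fixed $(k_1,k_2,k_3)$, exactly as in the paper.
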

\begin{proof}
First, we observe that
\begin{align*}
J_n-I_1-I_2-I_3
&=\sum_{\a,\b,\c=1}^n\mathbb E[\braces{\xi_0^\infty}\braces{\xi_\a^{\alpha_\a}}\braces{\xi_\b^{\alpha_\b}}\braces{\xi_\c^{\alpha_\c}}]\\
&=\sum_{\a,\b,\c=1}^n\sum_{k_1,k_2,k_3=0}^\infty
\mathbb E[\braces{\xi_0^\infty}\braces{\xi_\a^{k_1} \1_{\{\alpha_\a=k_1\}}}\braces{\xi_\b^{k_2} \1_{\{\alpha_\b=k_2\}}} \braces{\xi_\c^{k_3} \1_{\{\alpha_\c=k_3\}}}]\\
&=I_4+\sum_{\a,\b,\c=1}^{n}\sum_{k_1,k_2,k_3=0}^\infty\mathbb E[\braces{\xi_0^{k_1\wedge k_2\wedge k_3}}\braces{\xi_\a^{k_1}\1_{\{\alpha_\a=k_1\}}}\braces{\xi_\b^{k_2}\1_{\{\alpha_\b=k_2\}}}\braces{\xi_\c^{{k_3}}\1_{\{\alpha_\c=k_3\}}}].
\end{align*}

By Lemma \ref{lem:independent_xi},  
$\xi_0^{k_1\wedge k_2\wedge k_3}$ is independent of $(\xi_\a^{k_1}\1_{\{\alpha_\a=k_1\}}, \xi_\b^{k_2}\1_{\{\alpha_\b=k_2\}}, \xi_\c^{k_3}\1_{\{\alpha_\c=k_3\}})$, so
\begin{align*}
	&\mathbb E[\braces{\xi_0^{k_1\wedge k_2\wedge k_3}}\braces{\xi_\a^{k_1}\1_{\{\alpha_\a=k_1\}}}\braces{\xi_\b^{k_2}\1_{\{\alpha_\b=k_2\}}}\braces{\xi_\c^{k_3}\1_{\{\alpha_\c=k_3\}}}]\\
	=&\mathbb E[\braces{\xi_0^{k_1\wedge k_2\wedge k_3}}]\mathbb E[\braces{\xi_\a^{k_1}\1_{\{\alpha_\a=k_1\}}}\braces{\xi_\b^{k_2}\1_{\{\alpha_\b=k_2\}}}\braces{\xi_\c^{k_3}\1_{\{\alpha_\c=k_3\}}}]=0,
\end{align*}
proving the lemma.\end{proof}

Below we shall estimate $\xifour_1,\xifour_2,\xifour_3,\xifour_4$ separately. First, we present a technical lemma:

\begin{lemma} \label{lem:xi-alpha}  Let $d>4$. For any $0<\gamma < \frac{d-4}2$ and $m\ge 1$, there is some positive constant $c=c_{\gamma, m}$ only depending on $\gamma$ and $m$,   such that for all  $k\ge 1$ \begin{align} & \e \Big[\big( \sum_{i=0}^\infty(\xi_i^{k}-{\xi_i^\infty}) \1_{\{\d(u_0,u_i)\le m k\}} \big)^2\Big]
  \le c \,   k^{4-\gamma}. \label{eq:xi-alpha1}
   \end{align}
   Consequently, when $d>16$, \begin{align} & \e \Big[\Big( \sum_{i=0}^\infty(\xi_i^{\alpha_i}-{\xi_i^\infty})\Big)^2\Big] < \infty, \label{eq:xi-alphaL2} 
 \\ &
   \lim_{n\to} \frac1{n} \e \Big[\Big( \sum_{i=0}^n \braces{\xi_i^{\alpha_i}}\Big)^2\Big] = \kappa, \label{eq:xi-alphaL2-bis}  
   \end{align} where we recall that $\alpha_i:= \lfloor\frac{\d(u_i,u_0)}2\rfloor$ and $\kappa$ is the constant in Corollary \ref{coro:var}.  \end{lemma}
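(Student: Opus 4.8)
The plan is to prove the three assertions of Lemma~\ref{lem:xi-alpha} in the order \eqref{eq:xi-alpha1} $\Rightarrow$ \eqref{eq:xi-alphaL2} $\Rightarrow$ \eqref{eq:xi-alphaL2-bis}, since the second and third statements follow from the first by relatively soft arguments. For \eqref{eq:xi-alpha1}, fix $k\ge 1$ and expand the square:
\[
\e\Big[\big(\textstyle\sum_{i}(\xi_i^k-\xi_i^\infty)\1_{\{\d(u_0,u_i)\le mk\}}\big)^2\Big]
=\sum_{i,i'}\e\big[(\xi_i^k-\xi_i^\infty)(\xi_{i'}^k-\xi_{i'}^\infty)\1_{\{\d(u_0,u_i)\le mk,\,\d(u_0,u_{i'})\le mk\}}\big].
\]
Bounding $|\xi_i^k-\xi_i^\infty|\le |\xi_i^k-\xi_i^\infty|$ by itself and using Cauchy--Schwarz termwise is wasteful; instead I would bound $|(\xi_i^k-\xi_i^\infty)(\xi_{i'}^k-\xi_{i'}^\infty)|\le |\xi_i^k-\xi_i^\infty|$ (each factor lies in $\{0,1\}$), so the sum is at most $\e[\#\{(i,i'):\d(u_0,u_i)\le mk,\ \d(u_0,u_{i'})\le mk,\ \xi_i^k\ne\xi_i^\infty\}]$. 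The number of $i'$ with $\d(u_0,u_{i'})\le mk$ contributes a factor controlled by \eqref{eq:moment_geo1} (with $p=1$, or via Cauchy--Schwarz to decouple), of order $(mk)^2\lesssim_m k^2$; what remains is $\e[\sum_i \1_{\{\xi_i^k\ne\xi_i^\infty\}}\,(\text{local count})]$, and since $\e[\xi_i^k-\xi_i^\infty]\lesssim k^{(4-d)/2}$ by Lemma~\ref{lem:xi_to_infty} (applied at each $i$, translation invariance not available here because of the indicator, so one conditions on $\T^-$ or sums over the subtrees as in the proof of Lemma~\ref{l:cov2}), summing over the at most $\lesssim (mk)^2$ relevant indices $i$ gives $k^2\cdot k^2\cdot k^{(4-d)/2}$. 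Choosing $\gamma$ slightly below $(d-4)/2$ absorbs the polynomial losses and yields the bound $c\,k^{4-\gamma}$; one has to be a little careful and split according to whether $\d(u_i,\Sp)$ is small or large, mirroring the bookkeeping in the proof of Lemma~\ref{l:cov2}, to make the Galton--Watson moment estimates \eqref{eq:moment_geo1}--\eqref{eq:moment_geo2} applicable.

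For \eqref{eq:xi-alphaL2}, observe that $\alpha_i=\lfloor\d(u_0,u_i)/2\rfloor$, so on the event $\{\d(u_0,u_i)\le 2\alpha_i+1\}$ (which always holds) the truncation radius $\alpha_i$ is comparable to $\d(u_0,u_i)$. I would dyadically decompose over the value of $\alpha_i$: writing $\sum_i(\xi_i^{\alpha_i}-\xi_i^\infty)=\sum_{k\ge 0}\sum_{i:\alpha_i=k}(\xi_i^k-\xi_i^\infty)$, and noting that $\{\alpha_i=k\}\subset\{\d(u_0,u_i)\le 2k+1\}$, the $L^2$ norm of the $k$-th block is bounded using \eqref{eq:xi-alpha1} with $m=3$ by $c\,k^{(4-\gamma)/2}$. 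Summing over $k$: the series $\sum_k k^{(4-\gamma)/2}$ does not converge, so one needs the near-orthogonality of different blocks. Here I would invoke Lemma~\ref{lem:independent_xi}: for $i$ with $\alpha_i=k$ and $i'$ with $\alpha_{i'}=k'$, if $\d(u_i,u_{i'})$ is large enough the corresponding truncated variables are independent; since $\d(u_i,u_{i'})\ge |\d(u_0,u_i)-\d(u_0,u_{i'})|$ one gets decorrelation whenever $|k-k'|$ exceeds a constant multiple of $k\wedge k'$ — this is exactly the structure exploited in Lemma~\ref{l:cov2}, and the same covariance bound $\lesssim (k\vee k')^3 |i-i'|^{-3/2}$ combined with the contour estimate \eqref{esp:C-2minC} gives a convergent sum precisely when $d>16$ (the exponent $4-\gamma$ with $\gamma<(d-4)/2$ requires $4-(d-4)/2<0$, i.e. $d>12$, but the extra factors from the cross terms push the threshold to $d>16$). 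Concretely I expect the cleanest route is: bound $\e[(\sum_i(\xi_i^{\alpha_i}-\xi_i^\infty))^2]\le \sum_{i,i'}\e[|\xi_i^{\alpha_i}-\xi_i^\infty|\,\1_{\{\d(u_i,u_{i'})\le \alpha_i+\alpha_{i'}\}}]$ after discarding the genuinely independent far-apart pairs via Lemma~\ref{lem:independent_xi}, then use \eqref{eq:xiinfty-k}-type estimates and \eqref{esp:C-2minC}.

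For \eqref{eq:xi-alphaL2-bis}, I would write $\xi_i^{\alpha_i}=\xi_i^\infty+(\xi_i^{\alpha_i}-\xi_i^\infty)$, so
\[
\frac1n\e\Big[\big(\textstyle\sum_{i=1}^n\braces{\xi_i^{\alpha_i}}\big)^2\Big]
=\frac1n\e\big[\braces{Y_n}^2\big]+\frac1n\e\Big[\big(\textstyle\sum_{i=1}^n\braces{\xi_i^{\alpha_i}-\xi_i^\infty}\big)^2\Big]
+\frac2n\e\Big[\braces{Y_n}\textstyle\sum_{i=1}^n\braces{\xi_i^{\alpha_i}-\xi_i^\infty}\Big],
\]
using $Y_n=\sum_{i=1}^n\xi_i^\infty$. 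The first term tends to $\kappa$ by Corollary~\ref{coro:var} / Proposition~\ref{prop:linear_variance}. The second term is $O(1/n)$ by \eqref{eq:xi-alphaL2} (the sum over $1\le i\le n$ is dominated by the sum over all $i\ge 0$, whose $L^2$ norm is finite — here one needs translation invariance \eqref{eq:invariance} to move the window, or simply notes the full sum bound covers any finite window). The third (cross) term is bounded by $\frac2n\|\braces{Y_n}\|_2\cdot\|\sum_{i=1}^n\braces{\xi_i^{\alpha_i}-\xi_i^\infty}\|_2\lesssim \frac1n\cdot\sqrt n\cdot O(1)=O(n^{-1/2})$ by Cauchy--Schwarz, \eqref{kappaexistspositive}, and \eqref{eq:xi-alphaL2}. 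Hence the limit is $\kappa$.

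The main obstacle is \eqref{eq:xi-alpha1}, and within it the delicate accounting of how much the local count $\sum_i \1_{\{\d(u_0,u_i)\le mk\}}$ correlates with the rare event $\{\xi_i^k\ne\xi_i^\infty\}$: one cannot simply multiply $\e[\#\{i:\d\le mk\}]\lesssim k^2$ by $\p(\xi_i^k\ne\xi_i^\infty)\lesssim k^{(4-d)/2}$ and by another $k^2$ for the $i'$ count, because the events are not independent — the same Galton--Watson subtrees govern both the distances and the range overlaps. Following the pattern of Lemma~\ref{l:cov2}, I expect one must condition on $\T^-$ (or on the whole tree $\T^\infty$), express $\{\xi_i^k\ne\xi_i^\infty\}$ as a range-intersection event between the subtree past of $u_i$ beyond radius $k$ and $V(u_i)$, and then take expectations using the random-walk heat-kernel bound $\p(S_m=0)\lesssim m^{-d/2}$ together with \eqref{eq:moment02} for the Galton--Watson moments. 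Getting the exponent sharp enough that, after the dyadic summation in \eqref{eq:xi-alphaL2}, convergence holds under $d>16$ (and not merely $d>\text{something larger}$) will require choosing the auxiliary parameter $\gamma$ carefully and splitting the sum according to $\d(u_i,\Sp)\lessgtr k$, exactly as in the estimates \eqref{eq:gw2}--\eqref{eq:xiinfty-k}.
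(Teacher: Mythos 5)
Your treatment of \eqref{eq:xi-alphaL2-bis} is fine, but the heart of the lemma, the estimate \eqref{eq:xi-alpha1}, is not actually proved, and you say so yourself: after dropping one factor and reducing to $\e\big[\sum_{i,i'}\1_{\{\xi_i^k\ne\xi_i^\infty\}}\1_{\{\d(u_0,u_i)\le mk\}}\1_{\{\d(u_0,u_{i'})\le mk\}}\big]$, you multiply $\e[\#\{i'\colon \d\le mk\}]\lesssim k^2$, another $k^2$ for the $i$-count, and $\p(\xi_i^k\ne\xi_i^\infty)\lesssim k^{(4-d)/2}$ as if these were independent, then concede in your last paragraph that this is illegitimate and that some conditioning on $\T^-$ in the style of Lemma \ref{l:cov2} would be needed \emph{but you never carry it out}. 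That unexecuted step is precisely where the difficulty lives, and it is not clear it would give the clean bound $k^{4-\gamma}$ for every $\gamma<\frac{d-4}{2}$. The paper's mechanism is quite different and is missing from your sketch: by the stationarity \eqref{eq:invariance} (which you wrongly dismiss as ``not available here because of the indicator''; it is, one just shifts the triple $(u_0,u_i,u_j)$ to $(u_{-i},u_0,u_{j-i})$ and the indicators transform accordingly) one isolates a single factor $\xi_0^k-\xi_0^\infty$ at the origin, applies a three-exponent H\"older inequality $\e[XYZ]\le\|X\|_{2N/(N-2)}\|Y\|_N\|Z\|_2$ with $N$ large, bounds $\|X\|$ by Lemma \ref{lem:xi_to_infty}, $\|Y\|_N$ by \eqref{eq:moment_geo1}, and recognizes $\|Z\|_2$ as $\sqrt{\Gamma_{2m}(k,n)}$, i.e.\ the \emph{same} quantity at window $2m$. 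This yields the self-referential inequality $\Gamma_m\le c'\,k^{2-\gamma/2}\,m^2\sqrt{\Gamma_{2m}}$, which is then closed by iterating along $m=2^{m_0},2^{m_0+1},\dots$ and using the trivial a priori bound $\Gamma_\cdot\le (n+1)^2$. This bootstrap is what circumvents the dependence problem you flagged; without it (or a genuinely worked-out substitute) your proof of \eqref{eq:xi-alpha1} has a real gap.

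Two further points on your derivation of \eqref{eq:xi-alphaL2}. First, your arithmetic is off: the block $\sum_{i:\alpha_i=k}(\xi_i^k-\xi_i^\infty)$ has $L^2$ norm $\lesssim k^{(4-\gamma)/2}=k^{2-\gamma/2}$, and for $d>16$ one may choose $\gamma\in(6,\frac{d-4}{2})$, so $2-\gamma/2<-1$ and the plain triangle inequality in $L^2$ already sums -- this is exactly the paper's one-line argument, and no ``near-orthogonality of blocks'' is needed; the threshold $d>16$ comes from requiring $\gamma>6$, not from cross terms. Second, the alternative you sketch -- ``discarding the genuinely independent far-apart pairs via Lemma \ref{lem:independent_xi}'' -- is not valid as stated: the summands $\xi_i^{\alpha_i}-\xi_i^\infty$ are nonnegative and \emph{not centered}, so independence of a far pair gives a product of expectations, not zero, and those pairs cannot simply be dropped (they can be controlled via \eqref{compa-alpha}, but you would have to say so). So the second assertion is recoverable once \eqref{eq:xi-alpha1} is in place, but the route you describe for it is both unnecessary and, as written, incorrect.
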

   
 \begin{proof}  Let $n\ge 1$ and denote
\begin{align}
\Gamma_m(k, n):=
&\e \Big[\big( \sum_{i=0}^n(\xi_i^{k}-{\xi_i^\infty}) \1_{\{\d(u_0,u_i)\le m  k\}} \big)^2\Big] \label{def-Gamma-mk}\\
=&\sum_{0\le i,j\le n}\e \bracks*{ (\xi_i^{k}-{\xi_i^\infty})(\xi_j^{k}-{\xi_j^\infty}) \1_{\{\d(u_0,u_i),\d(u_0,u_j)\le mk\}} } \nonumber \\
\le&2\sum_{0\le i\le j\le n}\e \bracks*{ (\xi_i^{k}-{\xi_i^\infty})(\xi_j^{k}-{\xi_j^\infty}) \1_{\{\d(u_0,u_i)\le mk,\d(u_i,u_j)\le 2mk\}} }. \nonumber
\end{align}
By \eqref{eq:invariance}, move $(u_0,u_i,u_j)$ to $(u_{-i},u_0,u_{j-i})$ and denote $\ell=j-i$, we have
\begin{align*}
\Gamma_m
\le&2\sum_{0\le i, \ell\le n}\e \bracks*{ (\xi_0^{k}-{\xi_0^\infty})(\xi_\ell^{k}-{\xi_\ell^\infty}) \1_{\{\d(u_{-i},u_0)\le mk,\d(u_0,u_\ell)\le 2mk\}} }\\
=&2\e \Big[ (\xi_0^{k}-{\xi_0^\infty})\cdot\sum_{i=0}^n\1_{\{\d(u_{-i},u_0)\le mk\}}\cdot \sum_{\ell =0}^n(\xi_\ell^{k}-{\xi_\ell^\infty}) \1_{\{\d(u_0,u_\ell)\le 2mk\}} \Big].
\end{align*}
Note that for nonnegative random variables $X,Y,Z$ and every integer $N>2$, 
\begin{align}\label{eq:Cauchy+}
\e[XYZ]\le \|X\|_{\frac{2N}{N-2}}\|Y\|_{N}\|Z\|_2,\end{align} 

\noindent where for any $p\ge 1$, $\|\cdot\|_p$ denotes the usual $L^p$-norm. 
Apply this and the fact that $\xi_0^{k}-{\xi_0^\infty}\in\{0,1\}$, we have
\begin{align*}
\Gamma_m
\le&2\e \bracks*{\xi_0^{k}-{\xi_0^\infty}}^{\frac{N-2}{2N}}\cdot\e \Big[\big(\sum_{\a=1}^n\1_{\{\d(u_{-a},u_0)\le mk\}}\big)^N\Big]^{\frac 1 N}\cdot \e \Big[\big(\sum_{c=0}^n(\xi_\c^{k}-{\xi_\c^\infty}) \1_{\{\d(u_0,u_c)\le 2mk\}}\big)^2\Big]^{\frac 1 2}
\\
\le &
c' k^{\frac{4-d}{2}\frac{N-2}{2N}}\cdot(mk)^2\cdot\sqrt{\Gamma_{2m}},
\end{align*}
where 
$c'$ is some positive constant depending only on $N$, and  
we use Lemma \ref{lem:xi_to_infty} and   \eqref{eq:moment_geo1} in the last line to bound $\e \bracks*{\xi_0^{k}-{\xi_0^\infty}}$ and the $N$-th moment, uniformly in $n$, of $\sum_{\a=1}^n\1_{\{\d(u_{-a},u_0)\le mk\}}$, respectively. We choose and then fix $N$ large enough such that $(d-4) (N-2)/N > \gamma$, so that   for any $m\ge 1$, $$ \Gamma_m \le \lambda \, m^2 \, \sqrt{\Gamma_{2m}},$$ with $\lambda=\lambda_k:= c'\, k^{2- \frac\gamma2}.$ Hence $\Gamma_{2^m} \le \lambda 2^{2m} \Gamma_{2^{m+1}}^{1/2}$. Set $x_m:= (\Gamma_{2^m})^{2^{-m}}$. We get that $x_m \le \lambda^{2^{-m}} 2^{m 2^{-m+1}}\, x_{m+1}$, which implies that for any $m_0\ge 1$ and $n> m_0$, $$x_{m_0} \le \prod_{m=m_0}^n \lambda^{2^{-m}} 2^{m 2^{-m+1}}\, x_{n+1} \le c'' \, \lambda^{2^{-m_0+1}} ,$$ 

\noindent where in the last inequality, $c''$ denotes some positive numerical constant, and we use the fact that for $\Gamma_i\le  (n+1)^2$  for any $i$; hence $x_{n+1} \le (n+1)^{2^{-n+1}}$ is bounded in $n$.  

We have thus shown that for any $m_0\ge 1$, for all $n\ge 1$,  $\Gamma_{2^{m_0}} \le (c'')^{2^{m_0}}\, \lambda^2= (c'')^{2^{m_0}}\,  (c')^2\, k^{4-\gamma}.$ Letting $n\to \infty$, we see that \eqref{eq:xi-alpha1} holds for $m=2^{m_0}$. The general case of  \eqref{eq:xi-alpha1}  for all $m$ then follows from  the monotonicity in $m$.

For \eqref{eq:xi-alphaL2}, since $\sum_{i=0}^\infty(\xi_i^{\alpha_i}-{\xi_i^\infty})= \sum_{k=0}^\infty \sum_{i=0}^\infty(\xi_i^{k}-{\xi_i^\infty}) \1_{\{\d(u_0,u_i)\in \{k, k+1\}\}}$, we get that $$\|\sum_{i=0}^\infty(\xi_i^{\alpha_i}-{\xi_i^\infty})\|_{2} 
\le
\sum_{k=0}^\infty \| \sum_{i=0}^\infty(\xi_i^{k}-{\xi_i^\infty}) \1_{\{\d(u_0,u_i)\in \{k, k+1\}\}}\|_2.$$

Let $d>16$, and fix an arbitrary $\gamma \in (6, \frac{d-4}2)$. For $k\ge1$, applying \eqref{eq:xi-alpha1} to $m=2$ gives that $\| \sum_{i=0}^\infty(\xi_i^{k}-{\xi_i^\infty}) \1_{\{\d(u_0,u_i)\in \{k, k+1\}\}}\|_2 \lesssim k^{2- \gamma/2}$. Since $\gamma>6$, the sum over $k$ converges and we get  \eqref{eq:xi-alphaL2}. 

Finally,  \eqref{eq:xi-alphaL2-bis} follows from \eqref{eq:xi-alphaL2}, Corollary \ref{coro:var} and the fact that $ \sum_{i=1}^n \braces{\xi_i^\infty} =  \braces{Y_n}$. \end{proof}

\begin{lemma}\label{lem:I1}
Let $d>16$, then
\[
|\xifour_1| \lesssim n+\sqrt{\Theta_n},
\] where $\xifour_1$ and $\Theta_n$ are defined in \eqref{def-I1} and \eqref{def-Theta_n}, respectively.
\end{lemma}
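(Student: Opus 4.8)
The plan is to recognize $\xifour_1$ as a single expectation of a product of four centered quantities and then to peel off the small truncation factor by Cauchy--Schwarz, using the $L^2$ control from Lemma~\ref{lem:xi-alpha}. Since $Y_n=\sum_{i=1}^n\xi_i^\infty$ by \eqref{Yn=xiinfty}, carrying out the sums over the three ``free'' indices gives $\sum_{\b=1}^n\braces{\xi_\b^\infty}=\braces{Y_n}$, the same for the index $\c$, while $\sum_{\a=1}^n\braces{\xi_\a^\infty-\xi_\a^{\alpha_\a}}=\braces{E_n}$ with $E_n:=\sum_{\a=1}^n(\xi_\a^\infty-\xi_\a^{\alpha_\a})$. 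By linearity of expectation, the definition \eqref{def-I1} therefore rewrites as
\[
\xifour_1=\e\big[\braces{\xi_0^\infty}\,\braces{E_n}\,\braces{Y_n}^2\big].
\]

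Next I would bound $|\braces{\xi_0^\infty}|\le 1$ and apply the Cauchy--Schwarz inequality to get
\[
|\xifour_1|\le\e\big[|\braces{E_n}|\,\braces{Y_n}^2\big]\le\|\braces{E_n}\|_2\,\big\|\braces{Y_n}^2\big\|_2=\|\braces{E_n}\|_2\,\sqrt{\Theta_n},
\]
the last identity being just $\big\|\braces{Y_n}^2\big\|_2=(\e[\braces{Y_n}^4])^{1/2}=\sqrt{\Theta_n}$ by \eqref{def-Theta_n}. The lemma then follows once $\|\braces{E_n}\|_2$ is bounded uniformly in $n$. For this I would use that each summand $\xi_\a^{\alpha_\a}-\xi_\a^\infty$ is nonnegative: indeed $N_k(\a)\subset(-\infty,\a)$ forces $\xi_\a^k\ge\xi_\a^\infty$ for every $k$, in particular for the random value $k=\alpha_\a$. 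Hence, pointwise,
\[
0\le -E_n=\sum_{\a=1}^n(\xi_\a^{\alpha_\a}-\xi_\a^\infty)\le\sum_{\a=0}^\infty(\xi_\a^{\alpha_\a}-\xi_\a^\infty),
\]
so that $\|\braces{E_n}\|_2\le\|E_n\|_2\le\big\|\sum_{\a=0}^\infty(\xi_\a^{\alpha_\a}-\xi_\a^\infty)\big\|_2$, which is finite by \eqref{eq:xi-alphaL2} (valid since $d>16$). This yields $|\xifour_1|\lesssim\sqrt{\Theta_n}\le n+\sqrt{\Theta_n}$, which is in fact slightly stronger than claimed.

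There is no genuine obstacle here: once Lemma~\ref{lem:xi-alpha} is available, the argument is essentially two applications of Cauchy--Schwarz plus the structural rewriting above. The only point requiring a little care is the monotonicity step that lets one replace the finite window $\a\in\{1,\dots,n\}$ by the full sum over $\a\ge 0$, which relies on the sign of $\xi_\a^{\alpha_\a}-\xi_\a^\infty$; all the substantive work, including the dependence on the threshold $d>16$, has been front-loaded into the estimate \eqref{eq:xi-alphaL2}.
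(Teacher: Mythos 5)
Your proof is correct and follows essentially the same route as the paper: bound $|\braces{\xi_0^\infty}|\le 1$, apply Cauchy--Schwarz against $\braces{Y_n}^2$ to produce $\sqrt{\Theta_n}$, and control the accumulated truncation error $\sum_{\a}(\xi_\a^{\alpha_\a}-\xi_\a^\infty)$ in $L^2$ via \eqref{eq:xi-alphaL2}. The only (valid) deviation is that, by using $\|\braces{E_n}\|_2\le\|E_n\|_2$ together with the nonnegativity of $\xi_\a^{\alpha_\a}-\xi_\a^\infty$, you bypass the paper's splitting of $\braces{\xi_\a^\infty-\xi_\a^{\alpha_\a}}$ into a mean part (treated there via \eqref{compa-alpha} and Corollary~\ref{coro:var}, which is what produces the $n$ term) and a fluctuation part, so you end up with the marginally stronger bound $|\xifour_1|\lesssim\sqrt{\Theta_n}$.
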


\begin{proof}
Note that $$\xifour_1 =\sum_{\a=1}^{n} \e \Big[\braces{\xi_0^\infty} \braces{\xi_\a^\infty - \xi_\a^{\alpha_\a} }\big(\sum_{\b=1}^n \braces{\xi_\b^\infty} \big)^2\Big]
= \sum_{\a=1}^{n} \e \Big[\braces{\xi_0^\infty} \braces{\xi_\a^\infty - \xi_\a^{\alpha_\a} }\braces{Y_n}^2\Big].$$

Since $|\langle \xi_0^\infty\rangle|\le 1$, we have 
\begin{align}    
 |\xifour_1|   
	&\le
	\sum_{\a=1}^{n} \e \Big[  |\braces{\xi_\a^\infty - \xi_\a^{\alpha_\a} }|\, \braces{Y_n}^2 \Big]  \nonumber
	\\
	&\le
	 \sum_{\a=1}^{n} \e  [\xi_\a^{\alpha_\a}-{\xi_\a^\infty  }] \e\Big[\braces{Y_n}^2\Big]
	 +\sum_{\a=1}^{n} \e \Big[  (\xi_\a^{\alpha_\a}-\xi_\a^\infty) \, \braces{Y_n}^2\Big]
	 \nonumber
	 \\
	 & =:\xifour_1^{(1)}+\xifour_1^{(2)}.\label{eq:I1.1}
\end{align}

For $d>8$, we have $\sum_{\a=1}^{\infty} \e  [\xi_\a^{\alpha_\a}-{\xi_\a^\infty  }]<\infty $ by \eqref{compa-alpha}, and  $\e\left[\braces{Y_n}^2\right]\lesssim n$ by Corollary \ref{coro:var}, thus 
$\xifour_1^{(1)}\lesssim n$. 

For $\xifour_1^{(2)}$, we apply the Cauchy-Schwarz inequality to obtain that $$
\xifour_1^{(2)} \le \sqrt{\Theta_n} \sqrt{ \e \Big[\Big(\sum_{\a=1}^n  (\xi_\a^{\alpha_\a}-\xi_\a^\infty)\Big)^2\Big]}
\lesssim \sqrt{\Theta_n},
$$

\noindent by using \eqref{eq:xi-alphaL2} in the last estimate.  Lemma \ref{lem:I1} follows.
\end{proof}

\begin{lemma}\label{lem:I2}
When $d>16$,
\[
|\xifour_2|\lesssim n+\sqrt{\Theta_n},
\] where   $\xifour_2$ and $\Theta_n$ are defined in \eqref{def-I2} and \eqref{def-Theta_n}, respectively.

\end{lemma}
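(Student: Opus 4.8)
The plan is to mimic the treatment of $\xifour_1$ in Lemma \ref{lem:I1}, after one algebraic simplification. In \eqref{def-I2} the index $\a$ (resp. $\b$, $\c$) appears only inside the factor $\braces{\xi_\a^{\alpha_\a}}$ (resp. $\braces{\xi_\b^\infty-\xi_\b^{\alpha_\b}}$, $\braces{\xi_\c^\infty}$), so by linearity of expectation I would first rewrite
\[
\xifour_2=\e\Big[\braces{\xi_0^\infty}\,A_n\,B_n\,\braces{Y_n}\Big],\qquad A_n:=\sum_{\a=1}^n\braces{\xi_\a^{\alpha_\a}},\quad B_n:=\sum_{\b=1}^n\braces{\xi_\b^\infty-\xi_\b^{\alpha_\b}},
\]
using $\sum_{\c=1}^n\braces{\xi_\c^\infty}=\braces{Y_n}$. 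The key observation is that, writing $\xi_\a^{\alpha_\a}=\xi_\a^\infty+(\xi_\a^{\alpha_\a}-\xi_\a^\infty)$,
\[
A_n=\braces{Y_n}+\sum_{\a=1}^n\braces{\xi_\a^{\alpha_\a}-\xi_\a^\infty}=\braces{Y_n}-B_n,
\]
which lets me eliminate the problematic factor $A_n$ (whose fourth moment we do not control) and reduces $\xifour_2$ to
\[
\xifour_2=\e\big[\braces{\xi_0^\infty}\braces{Y_n}^2B_n\big]-\e\big[\braces{\xi_0^\infty}\braces{Y_n}B_n^2\big].
\]

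The crucial input is then the uniform bound $\sup_{n\ge1}\e[B_n^2]<\infty$. This follows since $D_n:=\sum_{\b=1}^n(\xi_\b^{\alpha_\b}-\xi_\b^\infty)$ has nonnegative summands by Lemma \ref{lem:xi_to_infty}, so $0\le D_n\le\sum_{\b=1}^\infty(\xi_\b^{\alpha_\b}-\xi_\b^\infty)$ and hence $\|B_n\|_2=\|D_n-\e[D_n]\|_2\le\|D_n\|_2+\e[D_n]$ is bounded uniformly in $n$ by \eqref{eq:xi-alphaL2} (which forces $d>16$) together with \eqref{compa-alpha} (valid for $d>8$). Granting this, for the first term, $|\braces{\xi_0^\infty}|\le1$ and Cauchy--Schwarz give
\[
\big|\e[\braces{\xi_0^\infty}\braces{Y_n}^2B_n]\big|\le\e\big[\braces{Y_n}^2|B_n|\big]\le\sqrt{\e[\braces{Y_n}^4]}\,\sqrt{\e[B_n^2]}=\sqrt{\Theta_n}\,\sqrt{\e[B_n^2]}\lesssim\sqrt{\Theta_n},
\]
and for the second term I would use the deterministic bound $|\braces{Y_n}|=|Y_n-\e[Y_n]|\le n$ (as $0\le Y_n\le n$) to get
\[
\big|\e[\braces{\xi_0^\infty}\braces{Y_n}B_n^2]\big|\le n\,\e[B_n^2]\lesssim n.
\]
Adding these yields $|\xifour_2|\lesssim n+\sqrt{\Theta_n}$.

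I expect no serious obstacle here: essentially all the work is already contained in Lemma \ref{lem:xi-alpha}, and the role of this lemma is just to reorganise $\xifour_2$ (via $A_n=\braces{Y_n}-B_n$) so that the $L^2$-estimate of Lemma \ref{lem:xi-alpha} and the trivial bound $|\braces{Y_n}|\le n$ apply; the appearance of $\sqrt{\Theta_n}$ on the right is harmless since it is exactly what will be bootstrapped away in the proof of Proposition \ref{P:4thmoment}. The only point requiring a little care is ensuring the error sums in $B_n$ are controlled uniformly in $n$, which is precisely why one passes to the infinite sum $\sum_{\b=1}^\infty(\xi_\b^{\alpha_\b}-\xi_\b^\infty)$, exploiting nonnegativity of the summands. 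Otherwise the argument is a mirror of the proof of Lemma \ref{lem:I1}, and the same $d>16$ threshold enters through \eqref{eq:xi-alphaL2}.
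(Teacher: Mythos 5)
Your proof is correct and follows essentially the same route as the paper: both use the identity $\sum_{\a}\braces{\xi_\a^{\alpha_\a}}=\braces{Y_n}-B_n$ to split $\xifour_2$ into a term controlled by $n\,\e[B_n^2]\lesssim n$ (via $|\braces{Y_n}|\le n$ and \eqref{eq:xi-alphaL2}, using nonnegativity of $\xi_\b^{\alpha_\b}-\xi_\b^\infty$ to pass to the infinite sum) and a term controlled by Cauchy--Schwarz against $\sqrt{\Theta_n}$. The only cosmetic difference is that the paper bounds the second term by comparison with $\xifour_1^{(1)}+\xifour_1^{(2)}$ from Lemma \ref{lem:I1}, whereas you apply Cauchy--Schwarz to it directly; the ingredients and the $d>16$ threshold are identical.
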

\begin{proof}
Since $|\braces{\xi_0^\infty} |\le1$ and $\braces{Y_n}= \sum_{\c=1}^{n} \braces{\xi_\c^\infty}$, we get that \begin{align} |\xifour_2|
&\le
 \e \Big[ \big|\sum_{\a=1}^n \braces{\xi_\a^{\alpha_\a} }\big| \times
 \big|\sum_{\b=1}^n \braces{\xi_\b^\infty-\xi_\b^{\alpha_\b}}\big|
 \times
 \big|\braces{Y_n}\big|\Big]  \nonumber
 \\
 &\le
 \e \Big[  \big|\braces{Y_n}\big| \times  \Big({\sum_{\b=1}^n \braces{\xi_\b^\infty-\xi_\b^{\alpha_\b}}}\Big)^2  \Big]+
   \e \Big[  \braces{Y_n}^2 \times \Big|{\sum_{\b=1}^n \braces{\xi_\b^\infty-\xi_\b^{\alpha_\b}}}\Big|   \Big] 
 \nonumber
 \\
 &=: \xifour_2^{(1)} + \xifour_2^{(2)},\label{II-1and2}\end{align}
where to obtain the second inequality, we use $\braces{\xi_\a^{\alpha_\a} }=   \braces{\xi_\a^{\alpha_\a}  - \xi_\a^\infty}  +  \braces{ \xi_\a^\infty} $. 

Note that $\xifour_2^{(2)} \le \xifour_1^{(1)}+\xifour_1^{(2)}$ (see \eqref{eq:I1.1}),  hence $\xifour_2^{(2)}\lesssim n+\sqrt{\Theta_n}$;  it suffices to handle $\xifour_2^{(1)}$.
By the trivial bounds: $| \braces{Y_n} |\le n$ and $\e[\braces{X}^2]\le\e[X^2]$,
$$\xifour_2^{(1)}
\le  n\e \left[\pars*{\sum_{\b=1}^n {(\xi_\b^{\alpha_\b}-\xi_\b^\infty)}}^2 \right]
\lesssim n,$$ by applying \eqref{eq:xi-alphaL2}. This completes the proof of Lemma \ref{lem:I2}. 
\end{proof}

\begin{lemma}\label{lem:I3}
When $d>16$,
\[
|\xifour_3|\lesssim n+\sqrt{\Theta_n},
\] where   $\xifour_3$ and $\Theta_n$ are defined in \eqref{def-I3} and \eqref{def-Theta_n}, respectively.
\end{lemma}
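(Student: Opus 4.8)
The plan is to mimic the scheme used for $\xifour_2$ in Lemma~\ref{lem:I2}, but exploiting that in the sum \eqref{def-I3} defining $\xifour_3$ \emph{two} of the three summation factors are of the truncated type $\xi^{\alpha}$, which may be collapsed into a single square. Since the $\a$- and $\b$-sums are identical, we rewrite
\[
\xifour_3=\e\Big[\braces{\xi_0^\infty}\,\Big(\sum_{\a=1}^n\braces{\xi_\a^{\alpha_\a}}\Big)^{2}\,\Big(\sum_{\c=1}^n\braces{\xi_\c^\infty-\xi_\c^{\alpha_\c}}\Big)\Big].
\]
Writing $E_n:=\sum_{\c=1}^n\braces{\xi_\c^\infty-\xi_\c^{\alpha_\c}}$ and using $\braces{\xi_\a^{\alpha_\a}}=\braces{\xi_\a^\infty}-\braces{\xi_\a^\infty-\xi_\a^{\alpha_\a}}$, summing over $\a$ gives $\sum_{\a=1}^n\braces{\xi_\a^{\alpha_\a}}=\braces{Y_n}-E_n$. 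Since $|\braces{\xi_0^\infty}|\le1$ and $(\braces{Y_n}-E_n)^2\le 2\braces{Y_n}^2+2E_n^2$, I would bound
\[
|\xifour_3|\le\e\big[(\braces{Y_n}-E_n)^2\,|E_n|\big]\le 2\,\e\big[\braces{Y_n}^2\,|E_n|\big]+2\,\e\big[E_n^2\,|E_n|\big].
\]

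For the first term, Cauchy--Schwarz together with the definition \eqref{def-Theta_n} gives $\e[\braces{Y_n}^2|E_n|]\le\sqrt{\e[\braces{Y_n}^4]}\,\sqrt{\e[E_n^2]}=\sqrt{\Theta_n}\,\sqrt{\e[E_n^2]}$. The key point is then that, since $\xi^k$ is nonincreasing in $k$, one has $\xi_\c^\infty-\xi_\c^{\alpha_\c}\in\{-1,0\}$ pointwise, so $E_n$ is, up to sign, the centering of a nonnegative sum dominated by $\sum_{i\ge0}(\xi_i^{\alpha_i}-\xi_i^\infty)$; hence \eqref{eq:xi-alphaL2} yields $\e[E_n^2]\lesssim1$ uniformly in $n$ — and this is exactly where $d>16$ enters. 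Therefore the first term is $\lesssim\sqrt{\Theta_n}$. The same pointwise control gives $|E_n|\le n$, so for the second term $\e[E_n^2|E_n|]\le n\,\e[E_n^2]\lesssim n$. Adding the two estimates yields $|\xifour_3|\lesssim n+\sqrt{\Theta_n}$, as desired.

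I do not expect a real obstacle here: the argument is in fact marginally shorter than that of Lemma~\ref{lem:I2}, since collapsing the two $\xi^{\alpha}$-factors into one square removes the need to split into two sub-terms. The only genuine input is the $n$-uniform $L^2$-bound $\sup_n\e[E_n^2]<\infty$ supplied by \eqref{eq:xi-alphaL2}; everything else is the trivial bounds $|\braces{\xi_0^\infty}|\le1$ and $|E_n|\le n$ together with Cauchy--Schwarz. The $\sqrt{\Theta_n}$ term surviving on the right-hand side is harmless, being absorbed in the self-improving recursion $x_n\le cn\sqrt{x_n}+cn^2$ carried out in the proof of Proposition~\ref{P:4thmoment}.
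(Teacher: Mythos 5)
Your proof is correct and follows essentially the same route as the paper: bound $|\braces{\xi_0^\infty}|\le 1$, write $\sum_{\a}\braces{\xi_\a^{\alpha_\a}}=\braces{Y_n}-E_n$, split the square, and control everything through the uniform $L^2$-bound \eqref{eq:xi-alphaL2} together with $|E_n|\le n$. The only (harmless, slightly cleaner) deviation is that you bound the cross term $\e[\braces{Y_n}^2|E_n|]$ directly by Cauchy--Schwarz as $\sqrt{\Theta_n}\,\sqrt{\e[E_n^2]}\lesssim\sqrt{\Theta_n}$, whereas the paper routes it through $\xifour_2^{(2)}\le \xifour_1^{(1)}+\xifour_1^{(2)}$, obtaining $n+\sqrt{\Theta_n}$ for that piece.
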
 
\begin{proof} 
We bound $|\langle \xi_0^\infty\rangle|$ by $1$, use  $\sum_{\a=1}^{n} \braces{\xi_\a^{\alpha_\a} } = \sum_{\a=1}^{n} \braces{\xi_\a^{\alpha_\a} -\xi_\a^\infty}  + \braces{Y_n}$, and get that   \begin{align}     |\xifour_3|
& \le
 \e \Big[ \big( \sum_{\a=1}^{n} \braces{\xi_\a^{\alpha_\a} } \big)^2\,  | \sum_{\c=1}^{n} \braces{\xi_\c^\infty-\xi_\c^{\alpha_\c}}| \Big]
 \nonumber
 \\
 &\le
 2 \e \Big[    \Big| \sum_{c=0}^{n} \braces{\xi_\c^\infty-\xi_\c^{\alpha_\c}}\Big|^3 \Big]  + 2
 \e \Big[ \braces{Y_n}^2\, \Big| \sum_{\c=1}^{n} \braces{\xi_\c^\infty-\xi_\c^{\alpha_\c}}\Big| \Big] \nonumber
 \\
 &\le
  2 n\e \Big[    \Big| \sum_{\c=1}^{n} \braces{\xi_\c^\infty-\xi_\c^{\alpha_\c}}\Big|^2 \Big]  + 2
  \e \Big[   \braces{Y_n}^2\, \Big| \sum_{\c=1}^{n} \braces{\xi_\c^\infty-\xi_\c^{\alpha_\c}}\Big| \Big] \nonumber
  \\
 &=:\xifour_3^{(1)}+ \xifour_3^{(2)}. \end{align}

By applying  \eqref{eq:xi-alphaL2}, $\xifour_3^{(1)} \lesssim n$, while  $\xifour_3^{(2)}=2\xifour_2^{(2)}$ by \eqref{II-1and2}, which has already been shown to satisfy  $\xifour_3^{(2)} \lesssim n+\sqrt{\Theta_n}$. 
\end{proof}

\begin{lemma}\label{lem:I4}
When $d>16$,
\[
 \xifour_4 \lesssim n+\sqrt{\Theta_n},
\] where   $\xifour_4$ and $\Theta_n$ are defined in \eqref{def-I4} and \eqref{def-Theta_n}, respectively.
\end{lemma}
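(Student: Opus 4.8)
The plan is to estimate $\xifour_4$ by exploiting the locality of the truncated summands together with the translational invariance \eqref{eq:invariance}, following the same philosophy used for $\xifour_1,\xifour_2,\xifour_3$ but now keeping track of the three truncation levels $k_1,k_2,k_3$. First I would rewrite the innermost sum over $\c$ (and its level $k_3$) using the identity $\braces{\xi_\c^{k_3}\1_{\{\alpha_\c=k_3\}}}=\braces{\xi_\c^{k_3}\1_{\{\alpha_\c=k_3\}}-\xi_\c^\infty\1_{\{\alpha_\c=k_3\}}}+\braces{\xi_\c^\infty\1_{\{\alpha_\c=k_3\}}}$, so that after summing over $k_3$ the second group reassembles into $\braces{\xi_\c^\infty}$ whose sum over $\c$ is $\braces{Y_n}$; the first group is controlled by Lemma \ref{lem:xi-alpha}, specifically by \eqref{eq:xi-alpha1} applied with $m=2$ since on $\{\alpha_\c=k_3\}$ the increment $\xi_\c^{k_3}-\xi_\c^\infty$ is supported on $\{\d(u_0,u_\c)\le 2k_3+2\}$. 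Iterating this replacement over $k_2$ (index $\b$) and $k_1$ (index $\a$) in turn, and using that $\braces{\xi_0^\infty-\xi_0^{k_1\wedge k_2\wedge k_3}}$ is bounded by $1$ in absolute value, reduces $\xifour_4$ up to error terms to $\e[\braces{\xi_0^\infty-\xi_0^{k_1\wedge k_2\wedge k_3}}\,\cdot\,(\text{products of }\braces{Y_n}\text{ and }\sum_\bullet\braces{\xi_\bullet^{k_\bullet}\1_{\alpha_\bullet=k_\bullet}-\xi_\bullet^\infty\1_{\alpha_\bullet=k_\bullet}})]$.

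The error terms that appear fall into two types. The first type looks like $\e[\,|\braces{Y_n}|^2\cdot|\sum_\c\sum_{k_3}\braces{(\xi_\c^{k_3}-\xi_\c^\infty)\1_{\alpha_\c=k_3}}|\,]$, which is exactly of the shape of $\xifour_2^{(2)}$ or $\xifour_3^{(2)}$: by Cauchy--Schwarz it is bounded by $\sqrt{\Theta_n}$ times the $L^2$-norm of $\sum_\c(\xi_\c^{\alpha_\c}-\xi_\c^\infty)$, which is finite by \eqref{eq:xi-alphaL2}; similarly terms with one factor $|\braces{Y_n}|$ and two such difference-sums, or with three difference-sums, are bounded by $n$ times an $L^2$-quantity that is $O(n)$ by \eqref{eq:xi-alphaL2} (using the crude bound $|\braces{Y_n}|\le n$ where needed). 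The second type is the main term $\e[\braces{\xi_0^\infty-\xi_0^{k_1\wedge k_2\wedge k_3}}\braces{Y_n}^3]$ after summation; here I would bound $|\braces{\xi_0^\infty-\xi_0^{k_1\wedge k_2\wedge k_3}}|\le 1$ and note that $\braces{Y_n}^3$ carries no free truncation index, so what must really be summed is the triple sum over $k_1,k_2,k_3$ weighted by $\1_{\{\alpha_\a=k_1\}}\1_{\{\alpha_\b=k_2\}}\1_{\{\alpha_\c=k_3\}}$ — but these indicators force $k_i=\alpha_{\cdot}$, collapsing the sums and returning a bounded factor; the surviving term is then again of the form $\e[|\braces{\xi_0^\infty-\xi_0^{\alpha_\a\wedge\alpha_\b\wedge\alpha_\c}}|\,|\braces{Y_n}|^3]$ which one bounds by Cauchy--Schwarz as $\sqrt{\Theta_n}\cdot\|\braces{Y_n}^2\|_1^{1/2}\lesssim\sqrt{\Theta_n}\cdot\sqrt n\le\frac12(\Theta_n/n\cdot n)$, or more simply by splitting $|\braces{Y_n}|^3\le n|\braces{Y_n}|^2$ when the truncation gain is weak and using Lemma \ref{lem:xi_to_infty} to convert $\e|\braces{\xi_0^\infty-\xi_0^{\alpha}}|$ into a summable decay in $\alpha\asymp\d(u_0,u_\cdot)/2$, then Lemma \ref{lem:estimate-contour} to sum over positions.

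Putting the pieces together, each contribution is shown to be $\lesssim n+\sqrt{\Theta_n}$, and since there are only finitely many such contributions (a bounded number of ways to distribute the ``difference'' versus ``infinity'' roles among the four indices), the bound $\xifour_4\lesssim n+\sqrt{\Theta_n}$ follows. I expect the main obstacle to be bookkeeping: carefully justifying the independence-based vanishing of the cross terms when one factor is $\braces{\xi_0^{k_1\wedge k_2\wedge k_3}}$ (via Lemma \ref{lem:independent_xi}, exactly as in the proof of Lemma \ref{lem:I0}), and ensuring that after each replacement step the residual ``difference'' sums genuinely have the locality $\1_{\{\d(u_0,u_\cdot)\le 2k+2\}}$ needed to invoke \eqref{eq:xi-alpha1} rather than the weaker global bound — the dimensional threshold $d>16$ enters precisely here, through the requirement $\gamma>6$ in Lemma \ref{lem:xi-alpha} so that $\sum_k k^{2-\gamma/2}<\infty$, which is the same place $d>16$ was needed for \eqref{eq:xi-alphaL2}.
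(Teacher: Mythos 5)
Your overall philosophy (localize, compare truncated with untruncated summands, reassemble $\braces{Y_n}$) is in the right spirit, but the core of your argument has a gap. The factor $\braces{\xi_0^\infty-\xi_0^{k_1\wedge k_2\wedge k_3}}$ depends on all three truncation indices, so you cannot ``sum over $k_3$ to reassemble $\braces{Y_n}$'', nor collapse the $k$-sums using the indicators (which in any case sit inside centerings): the $k$-sums and the $\xi_0$-factor are coupled precisely through $k_1\wedge k_2\wedge k_3$. The paper deals with this by first splitting $\xifour_4=3\xifour_4^{(1)}+3\xifour_4^{(2)}+\xifour_4^{(3)}$ according to which index attains the minimum, so that for instance on $\{k_1<k_2\wedge k_3\}$ the factor is $\braces{\xi_0^\infty-\xi_0^{k_1}}$ and only then can the sums over $k_2,k_3$ be carried out, producing $\sum_\b\braces{\xi_\b^{\alpha_\b}\1_{\{\alpha_\b>k_1\}}}$ rather than $\braces{Y_n}$. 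Your sketch has no such splitting, and the reassembly step is not valid as stated.

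More seriously, in your ``main term'' you bound $|\braces{\xi_0^\infty-\xi_0^{k_1\wedge k_2\wedge k_3}}|\le 1$, which throws away the only source of smallness in $\xifour_4$: the paper's estimate hinges on $\e[\xi_0^{k}-\xi_0^\infty]\lesssim(1+k)^{2-d/2}$ (see \eqref{eq:xiinfty-k}) being summed against moments of the local counts $\sum_\a\1_{\{\alpha_\a=k\}}$ and $\sum_\b\1_{\{\alpha_\b\le k\}}$, which grow polynomially in $k$ (Lemma \ref{lem:moment_geo}, \eqref{eq:moment_geo1-alpha}); this is exactly where $d>16$ enters (e.g.\ in \eqref{I41-A3}), and the decoupling is achieved through the elementary inequality $-\e[\braces{X}\braces{Y}Z]\le\e[X]\e[YZ]+\e[Y]\e[XZ]$ for nonnegative $X,Y,Z$, not through independence. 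Without this decay the triple sum over $\a,\b,\c$ cannot be closed: the decay you invoke is only in $\alpha_\a\wedge\alpha_\b\wedge\alpha_\c$, so two of the three indices remain unsummed and contribute a factor of order $n^2$; also the factorization $\e[|\xi_0^\infty-\xi_0^{\alpha}|\,\braces{Y_n}^2]\approx\e[|\xi_0^\infty-\xi_0^{\alpha}|]\,\e[\braces{Y_n}^2]$ in your ``more simply'' alternative has no independence to support it, since $Y_n$ depends on $\T^-$ and $V(\T^-)$ as well. Finally, the bound you do claim for this term, $\sqrt{\Theta_n}\cdot\sqrt{n}$ (or $\tfrac12\Theta_n$), is strictly weaker than the asserted $n+\sqrt{\Theta_n}$ and would ruin the bootstrap in Proposition \ref{P:4thmoment}: plugging $J_n\lesssim n+\sqrt{n\Theta_n}$ into \eqref{Theta<sumJ} only yields $\Theta_n\lesssim n^3$, which gives neither $\e[\braces{Y_n}^4]\lesssim n^2$ nor uniform integrability of $\braces{Y_n}^2/n$. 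Your treatment of the error terms of the first type (in the spirit of $\xifour_2^{(2)}$, $\xifour_3^{(2)}$, via \eqref{eq:xi-alphaL2}) and your identification of one place where $d>16$ is needed are fine, but the main term must be handled along the paper's route: split by the location of the minimum, then combine \eqref{eq:xiinfty-k}, \eqref{eq:moment_geo1-alpha}, \eqref{eq:xi-alphaL2}, and Cauchy--Schwarz against $\Theta_n$ only in the piece where $\braces{Y_n}^2$ genuinely appears.
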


\begin{proof}
By comparing $k_1,k_2,k_3$, we can subdivide $\xifour_4$ into
\[
\xifour_4=3\xifour_4^{(1)}+3\xifour_4^{(2)}+\xifour_4^{(3)},
\]
where
\begin{align*} 
\xifour_4^{(1)}&:= \sum_{\a,\b,\c=1}^{n} \sum_{k_1,k_2,k_3=0}^\infty
\e \left[\braces{\xi_0^\infty-\xi_0^{k_1}} \braces{\xi_\a^{k_1}\1_{\{\alpha_\a=k_1\}}} \braces{\xi_\b^{k_2}\1_{\{\alpha_\b=k_2\}}}  \braces{\xi_\c^{k_3}\1_{\{\alpha_\c=k_3\}}}\1_{\{k_1<k_2\wedge k_3\}} \right]  , \\
\xifour_4^{(2)}&:= \sum_{\a,\b,\c=1}^{n} \sum_{k_1,k_2,k_3=0}^\infty
\e \left[\braces{\xi_0^\infty-\xi_0^{k_1}} \braces{\xi_\a^{k_1}\1_{\{\alpha_\a=k_1\}}} \braces{\xi_\b^{k_2}\1_{\{\alpha_\b=k_2\}}}  \braces{\xi_\c^{k_3}\1_{\{\alpha_\c=k_3\}}}\1_{\{k_1=k_2<k_3\}} \right]  , \\
\xifour_4^{(3)}&:= \sum_{\a,\b,\c=1}^{n} \sum_{k_1,k_2,k_3=0}^\infty
\e \left[\braces{\xi_0^\infty-\xi_0^{k_1}} \braces{\xi_\a^{k_1}\1_{\{\alpha_\a=k_1\}}} \braces{\xi_\b^{k_2}\1_{\{\alpha_\b=k_2\}}}  \braces{\xi_\c^{k_3}\1_{\{\alpha_\c=k_3\}\}}}\1_{\{k_1=k_2=k_3\}} \right]  .
\end{align*}

We first estimate $\xifour_4^{(1)}$. Note that 
$$\xifour_4^{(1)}= 
\sum_{\a=1}^{n} \sum_{k=0}^\infty
\e \Big[\braces{\xi_0^\infty-\xi_0^{k}} \braces{\xi_\a^{k}\1_{\{\alpha_\a=k\}}} \big(\sum_{\b=1}^n\braces{\xi_\b^{\alpha_\b}\1_{\{\alpha_\b>k\}}}\big)^2 \Big]. 
$$

For nonnegative random variables $X,Y,Z\ge 0$,
\begin{align*}
-\e\bracks{\braces{X}\braces{Y}Z}
\le \e[X] \e[Y Z] + \e[Y] \e[X Z] 
\end{align*}
Apply this to
\[
X=\xi_0^{k}-\xi_0^\infty, \,
Y=\xi_\a^{k}\1_{\{\alpha_\a=k\}},\,
Z=\big(\sum_{\b=1}^n\braces{\xi_\b^{\alpha_\b}\1_{\{\alpha_\b>k\}}}\big)^2,
\]

\noindent we have \begin{align} \xifour_4^{(1)}
&
\le 
\sum_{\a=1}^{n} \sum_{k=0}^\infty
\e[\xi_0^k-\xi_0^\infty] \, \e \Big[  \xi_\a^{k}\1_{\{\alpha_\a=k\}} \big(\sum_{\b=1}^n\braces{\xi_\b^{\alpha_\b}\1_{\{\alpha_\b>k\}}}\big)^2 \Big]  \nonumber
\\ & \qquad + \sum_{\a=1}^{n} \sum_{k=0}^\infty
\e[\xi_\a^{k}\1_{\{\alpha_\a=k\}} ] \, \e \Big[ (\xi_0^k-\xi_0^\infty)  \big(\sum_{\b=1}^n\braces{\xi_\b^{\alpha_\b}\1_{\{\alpha_\b>k\}}}\big)^2 \Big],  \nonumber
\\
&=: A_{\eqref{I41-2sums}} + B_{\eqref{I41-2sums}}. \label{I41-2sums}\end{align}

Let us estimate  $A_{\eqref{I41-2sums}} $.  By \eqref{eq:xiinfty-k}, $\e[\xi_0^k-\xi_0^\infty] \lesssim (1+k)^{2-d/2}$ for all $k\ge 0$. Observe that $0\le \xi_\a^k\le 1$, and that
\begin{align*}
\braces{\xi_\b^{\alpha_\b}\1_{\{\alpha_\b>k\}}}
=
\braces{\xi_\b^\infty}+\braces{(\xi_\b^{\alpha_\b}-\xi_\b^\infty)\1_{\{\alpha_\b>k\}}}-\braces{\xi_\b^\infty\1_{\{\alpha_\b\le k\}}}.
\end{align*}

Since $\sum_{\b=1}^n\braces{\xi_\b^{\infty}}=\braces{Y_n}$, we have \begin{equation} \big(\sum_{\b=1}^n\braces{\xi_\b^{\alpha_\b}\1_{\{\alpha_\b>k\}}}\big)^2  
\le
 3 \braces{Y_n}^2+ 3 \big(\sum_{\b=1}^n\braces{(\xi_\b^{\alpha_\b}-\xi_\b^\infty)\1_{\{\alpha_\b>k\}}}\big)^2
 + 3 \big(\sum_{\b=1}^n \braces{\xi_\b^\infty\1_{\{\alpha_\b\le k\}}}\big)^2.  \label{sum-b-3terms}\end{equation}

We claim that for $d>16$, 
\begin{align} 
\sum_{\a=1}^{n} \sum_{k=0}^\infty
(1+k)^{2-d/2} \, \e \Big[ \1_{\{\alpha_\a=k\}} \braces{Y_n}^2 \Big]
& \lesssim \sqrt{\Theta_n}, \label{I41-A1}
\end{align}
\begin{align} 
\sum_{\a=1}^{n} \sum_{k=0}^\infty
(1+k)^{2-d/2} \, \e \Big[ \1_{\{\alpha_\a=k\}} \big(\sum_{\b=1}^n\braces{(\xi_\b^{\alpha_\b}-\xi_\b^\infty)\1_{\{\alpha_\b>k\}}}\big)^2
\Big]
& \lesssim n, \label{I41-A2}
\end{align}
\begin{align} 
 \sum_{\a=1}^{n} \sum_{k=0}^\infty
(1+k)^{2-d/2} \, \e \Big[ \1_{\{\alpha_\a=k\}} \big(\sum_{\b=1}^n \braces{\xi_\b^\infty\1_{\{\alpha_\b\le k\}}}\big)^2
\Big]
& \lesssim 1 .  \label{I41-A3} \end{align}

\noindent Then we will obtain that for $d>16$, \begin{equation} A_{\eqref{I41-2sums}} \lesssim  n+ \sqrt{\Theta_n} .    \end{equation}

We first prove \eqref{I41-A1}. 
By the Cauchy-Schwarz inequality, the left-hand-side of \eqref{I41-A1} is less than \begin{align*}      \sum_{k=0}^\infty
(1+k)^{2-d/2} \, \Big(\e\Big[\big(\sum_{\a=1}^n \1_{\{\alpha_\a= k\}} \big)^2\Big]\Big)^{1/2}\, \Theta_n^{1/2} 
 \lesssim  \Theta_n^{1/2} \, \sum_{k=0}^\infty
(1+k)^{3-d/2} ,  
\end{align*}

\noindent where the last inequality follows from \eqref{eq:moment_geo2}. Since $d> 8$, the above sum over $k$ converges and we get \eqref{I41-A1}.

By expanding $\braces{\cdot}$ and dropping $\1_{\{\alpha_\b>k\}}\le 1$, we see that \eqref{I41-A2} is bounded, up to a multiplicative factor of $2$, by
\begin{align*}  & \sum_{\a=1}^{n} \sum_{k=0}^\infty
(1+k)^{2-d/2} \, 
\Big(\e \Big[ \1_{\{\alpha_\a=k\}} \big(\sum_{\b=1}^n (\xi_\b^{\alpha_\b}-\xi_\b^\infty) \big)^2\Big]    
+ \p(\alpha_\a=k) \Big(\e\Big[ \sum_{\b=1}^n (\xi_\b^{\alpha_\b}-\xi_\b^\infty)\Big]  \Big)^2 \Big)
\\ 
\le & n \, \e \Big[   \big(\sum_{\b=1}^\infty (\xi_\b^{\alpha_\b}-\xi_\b^\infty)^2 \Big] + n\, \Big(\e\Big[ \sum_{\b=1}^\infty (\xi_\b^{\alpha_\b}-\xi_\b^\infty)\Big]  \Big)^2
 \lesssim n,
\end{align*}
where in the first inequality, we simply use $(1+k)^{2-d/2} \le 1$,  and the second inequality follows from \eqref{eq:xi-alphaL2}.

Similarly, \eqref{I41-A3} is, up to a multiplicative factor of $2$, less than \begin{align*}  &  \sum_{\a=1}^{n} \sum_{k=0}^\infty
(1+k)^{2-d/2} \, \Big(\e \Big[ \1_{\{\alpha_\a=k\}} \big(\sum_{\b=1}^n  \1_{\{\alpha_\b\le k\}}\big)^2
\Big]
  +  \p(\alpha_\a=k) \Big(\e\Big[\sum_{\b=1}^n  \1_{\{\alpha_\b\le k\}}\Big]\Big)^2\Big).\end{align*}

By  \eqref{eq:moment_geo1} and \eqref{eq:moment_geo2}, for any $p\ge1$,
\begin{align} \label{eq:moment_geo1-alpha}
\e \Big[\Big( \sum_{\b=1}^\infty \1_{\{\alpha_\b \le k \}} \Big)^p\Big] & \lesssim_p  (1+k)^{2 p}, \quad
\e \Big[\Big( \sum_{\b=1}^\infty \1_{\{\alpha_\b = k \}} \Big)^p \Big] \lesssim_p (1+k)^{p}.\end{align}

Therefore, we use the Cauchy-Schwarz inequality and obtain that  \begin{align*}     & \sum_{\a=1}^{n} \sum_{k=0}^\infty
(1+k)^{2-d/2} \, \e \Big[ \1_{\{\alpha_\a=k\}} \big(\sum_{\b=1}^n  \1_{\{\alpha_\b\le k\}}\big)^2
\Big]
\\ \le &\sum_{k=0}^\infty (1+k)^{2-d/2} \, \sqrt{\e \Big[\Big(\sum_{\a=1}^\infty \1_{\{\alpha_\a=k\}}\Big)^2\Big] \e \Big[\Big(\sum_{\b=1}^\infty  \1_{\{\alpha_\b\le k\}}\Big)^4\Big]} 
\lesssim 
\sum_{k=0}^\infty (1+k)^{2-d/2} \,  k^{1+4}  ,
\end{align*}

\noindent whose sum over $k$ is finite for $d>16$. This yields \eqref{I41-A3}.

The term $B_{\eqref{I41-2sums}}$ can be estimated in a similar way; we only point out the main differences. Using \eqref{sum-b-3terms} and bounding $\xi_\a^k $ by $1$, we need to handle the  following three double sums: \begin{align*}   & \sum_{\a=1}^n \sum_{k=0}^\infty \p(\alpha_\a=k) \e [(\xi_0^k-\xi_0^\infty) \braces{Y_n}^2],\\
&
\sum_{\a=1}^n \sum_{k=0}^\infty  \p(\alpha_\a=k) \e\Big[(\xi_0^k-\xi_0^\infty)  \big(\sum_{\b=1}^n\braces{(\xi_\b^{\alpha_\b}-\xi_\b^\infty)\1_{\{\alpha_\b>k\}}}\big)^2\Big],
\\ & \sum_{\a=1}^n \sum_{k=0}^\infty  \p(\alpha_\a=k) \e\Big[(\xi_0^k-\xi_0^\infty)  \big(\sum_{\b=1}^n \braces{\xi_\b^\infty\1_{\{\alpha_\b\le k\}}}\big)^2\Big].
 \end{align*}

For the first term with $\braces{Y_n}^2$, we use the Cauchy-Schwarz inequality, \eqref{eq:xiinfty-k} and 
\eqref{eq:moment_geo1-alpha} to see that the corresponding double sum is $O(\sqrt{\Theta_n})$. 

For the middle term with $(\xi_\b^{\alpha_\b}-\xi_\b^\infty)\1_{\{\alpha_\b>k\}}$, we drop $\xi_0^k-\xi_0^\infty \le 1$, $\1_{\{\alpha_\b>k\}}\le1$ and use \eqref{eq:xi-alphaL2} to see that the corresponding double sum is $O(n)$. 

Finally for the last term with $\xi_\b^\infty \1_{\{\alpha_\b\le k\}}$, we drop $\xi_\b^\infty\le 1$ and use the independence between $(\xi_0^k-\xi_0^\infty) $ and $\sum_{\b=1}^n  1_{\{\alpha_\b\le k\}}$, then we use \eqref{eq:moment_geo1-alpha}  to see that the corresponding double sum is $O(1)$. This implies that $ B_{\eqref{I41-2sums}} \lesssim  n+ \sqrt{\Theta_n} $, hence \begin{equation}   I_4^{(1)} \lesssim  n+ \sqrt{\Theta_n} .  \end{equation}

It remains to handle $\xifour_4^{(2)}$ and $\xifour_4^{(3)}$,  which are much easier. Note that
\begin{align*}
\xifour_4^{(2)}
=&\sum_{\a,\b,\c=1}^{n} \sum_{k_1,k_2,k_3=0}^\infty
\e \left[\braces{\xi_0^\infty-\xi_0^{k_1}} \braces{\xi_\a^{k_1}\1_{\{\alpha_\a=k_1\}}} \braces{\xi_\b^{k_2}\1_{\{\alpha_\b=k_2\}}}  \braces{\xi_\c^{k_3}\1_{\{\alpha_\c=k_3\}}}\1_{\{k_1=k_2<k_3\}} \right]\\
=& \sum_{k=0}^\infty
\e \left[\braces{\xi_0^\infty-\xi_0^{k}} \big( \sum_{\a=1}^{n}\braces{\xi_\a^{k}\1_{\{\alpha_\a=k\}}} \big)^2  \big(\sum_{\c=1}^{n} \braces{\xi_\c^{\alpha_\c}\1_{\{\alpha_\c>k\}}}\big) \right],
\end{align*}
and
\begin{align*}
\xifour_4^{(3)}&=\sum_{\a,\b,\c=1}^n\sum_{k=0}^\infty\e[\braces{\xi_0^\infty-\xi_0^k}\braces{\xi_\a^{k}\1_{\{\alpha_\a=k\}}}\braces{\xi_\b^{k}\1_{\{\alpha_\b=k\}}}\braces{\xi_\c^{k}\1_{\{\alpha_\c=k\}}}]
\\
&= \sum_{k=0}^\infty\e\Big[\braces{\xi_0^\infty-\xi_0^k} \big(\sum_{\a=1}^n\braces{\xi_\a^{k}\1_{\{\alpha_\a=k\}}}\big)^3\Big].
\end{align*}

For $\xifour_4^{(2)}$, we bound $|\sum_{\c=1}^{n} \braces{\xi_\c^{\alpha_\c}\1_{\{\alpha_\c>k\}}}|\le n$, drop $\xi_\a^k\le 1$, expand $\braces{\cdot}$, and use the independence between  $\xi_0^\infty-\xi_0^{k}$ and $\sum_{\a=1}^{n} \1_{\{\alpha_\a=k\}}$, to obtain that  \begin{align*} \xifour_4^{(2)}
&\lesssim   n\, \sum_{k=0}^\infty  \e [\xi_0^k-\xi_0^\infty] \, \e \Big[\big(\sum_{\a=1}^{n} \1_{\{\alpha_\a=k\}}\big)^2\Big]
\\
&\lesssim n \,\sum_{k=0}^\infty   (1+k)^{2-d/2} \, (1+k)^2 
\lesssim n.  \end{align*}

We estimate $\xifour_4^{(3)}$ in the same way, by dropping $\xi_\a^k\le 1$, applying \eqref{eq:moment_geo2} to $p=3$: \begin{align*} \xifour_4^{(3)}
&\lesssim     \sum_{k=0}^\infty  \e [\xi_0^k-\xi_0^\infty] \, \e \Big[\big(\sum_{\a=1}^{n} \1_{\{\alpha_\a=k\}}\big)\Big]^3
\\
&\lesssim   \,\sum_{k=0}^\infty   (1+k)^{2-d/2} \,(1+k)^3 
\lesssim 1.  \end{align*}

These two estimates complete the proof of Lemma \ref{lem:I4}. \end{proof}

Now by combining Lemmas \ref{lem:I0}, \ref{lem:I1}, \ref{lem:I2}, \ref{lem:I3} and \ref{lem:I4}, we immediately obtain Lemma \ref{lem:main4th}. Thus the proof of Proposition \ref{P:4thmoment} is complete.

\end{document}